\DeclareMathOperator{\End}{End}
\definecolor{darkgreen}{rgb}{0.0,0.7,0.0}
\newenvironment{AUTH}{\noindent\color{darkgreen} Authors:}{}
\newenvironment{LC}{\noindent\color{green} LC:}{}
\newenvironment{vd}{\noindent\color{blue} \colorbox{blue}{\color{black} VD:}}{}
\newenvironment{ME}{\noindent\color{magenta} ME:} {}
\newenvironment{ml}{\noindent\color{red} \colorbox{blue}{\color{black} Referee:}}{}
\newtheorem{theorem}{Theorem}%[section]
\newtheorem{proposition}[theorem]{Proposition}
\newtheorem{lemma}[theorem]{Lemma}
\newtheorem{corollary}[theorem]{Corollary}
\theoremstyle{definition}
\newtheorem{definition}[theorem]{Definition}
\newtheorem{remark}[theorem]{Remark}
\newtheorem{example}[theorem]{Example}
\newcommand{\prref}[1]{\prettyref{#1}}
\newif{\ifshort}\shorttrue
\newcommand{\ispath}[2]{\text{\sc{Ispath}}(#1,#2)}
\newcommand{\subst}{substitution\xspace}
\newcommand{\exe}{extended equation\xspace}
\newcommand{\solu}{solution\xspace}
\newcommand{\invol}{involution\xspace}
\newcommand{\edtol}{EDT0L\xspace} 
\newcommand{\tra}{transition\xspace}
\newcommand{\IFF}{if and only if\xspace}
\renewcommand{\hom}{homomorphism\xspace}
\newcommand{\homs}{homomorphisms\xspace}
\newcommand{\Endo}{endomorphism\xspace}
\newcommand{\Endos}{endomorphisms\xspace}
\newcommand{\morph}{morphism\xspace}
\newcommand{\lds}{, \ldots ,}
\newcommand{\ra}{\longrightarrow}
\newcommand{\Apos}{A_{+}}
\newcommand{\Apone}{A_{\pm}}
\newcommand{\FG}[1]{\text{F}({#1})}
\newcommand{\FGA}{\FG \Apos}
\newcommand{\muinit}{\mu_{\mathrm{init}}}
\newcommand{\Winit}{W_{\mathrm{init}}}
\newcommand{\Xinit}{\cX_{\mathrm{init}}}
\newcommand{\arc}[1]{\overset{#1}\ra}
\newcommand{\set}[2]{\left\{#1\mathrel{\left|\vphantom{#1}\vphantom{#2}\right.}#2\right\}}
\newcommand{\oneset}[1]{\left\{\mathinner{#1}\right\}}
\newcommand{\os}{\oneset}
\newcommand{\sm}{\setminus}
\newcommand{\es}{\emptyset}
\newcommand{\sse}{\subseteq}
\newcommand{\abs}[1]{\left|\mathinner{#1}\right|}
\newcommand{\Abs}[1]{\left\Vert\mathinner{#1}\right\Vert}
\newcommand{\gen}[1]{\left< \mathinner{#1} \right>}
\newcommand{\N}{\ensuremath{\mathbb{N}}}
\newcommand{\Z}{\ensuremath{\mathbb{Z}}}
\newcommand{\F}{\ensuremath{\mathbb{F}}}
\newcommand{\M}{\ensuremath{\mathbb{M}}}
\newcommand{\MMA}{\ensuremath{\mathbb{M}(A)}}
\newcommand{\NP}{\ensuremath{\mathsf{NP}}}
\renewcommand{\P}{\ensuremath{\mathsf{P}}}
\newcommand{\NSPACE}{\ensuremath{\mathsf{NSPACE}}}
\newcommand{\DSPACE}{\ensuremath{\mathsf{DSPACE}}}
\newcommand{\DTIME}{\ensuremath{\mathsf{DTIME}}}
\renewcommand{\phi}{\varphi}
\newcommand{\eps}{\varepsilon}
\newcommand{\oo}{\omega}
\newcommand{\alp}{\alpha}
\newcommand{\bet}{\beta}
\newcommand{\lam}{\lambda}
\newcommand{\sig}{\sigma}
\newcommand{\Sig}{\Sigma}
\newcommand{\Gam}{\GG}
\newcommand{\Del}{\Delta}
\newcommand\GG{\Gamma}
\newcommand\Lam{\Lambda}
\newcommand\OO{\Omega}
\newcommand{\Oh}{\mathcal{O}}
\newcommand{\id}[1]{\mathrm{id}_{#1}}
\newcommand{\cA}{\mathcal{A}}
\newcommand{\cC}{\mathcal{C}}
\newcommand{\cD}{\mathcal{D}}
\newcommand{\cF}{\mathcal{F}}
\newcommand{\cL}{\mathcal{L}}
\newcommand{\cP}{\mathcal{P}}
\newcommand{\cX}{\mathcal{X}}
\newcommand{\cZ}{\mathcal{Z}}
\newcommand{\cR}{\mathcal{R}}
\newcommand{\cSol}{\mathrm{Sol}}
\newcommand{\ov}[1]{\overline{#1}}
\newcommand{\oi}[1]{{#1}^{-1}}
\newcommand{\E}[1]{\mathop{\mathrm E}\left[\, #1 \,\right]}
\newcommand{\bcc}{96n+ 6\abs{\Winit}} %%%%%%% Block compression constant
\newcommand{\pcc}{104n+ 6\abs{\Winit}} %%%%%%% Pair compression constant
\begin{document}

\title{Solution sets for
equations over free groups  are {EDT0L} languages}

\author{Laura Ciobanu}

\address{Institut de math\'ematiques, Universit{\'e} de Neuch{\^a}tel,
Rue Emile-Argand 11,  CH-2000 Neuch{\^a}tel, Switzerland}
\email{laura.ciobanu@unine.ch}

\author{Volker Diekert}

\address{Institut f\"ur Formale Methoden der Informatik,  Universit\"at Stuttgart,
  Universit\"atsstr. 38, D-70569 Stuttgart, Germany}
  \email{diekert@fmi.uni-stuttgart.de}

\author{Murray Elder}
\address{School of Mathematical and Physical Sciences,
The University of Newcastle,
Callaghan NSW 2308, Australia}
\email{Murray.Elder@newcastle.edu.au}

  \keywords{equation in a free group; EDT0L language;  indexed language; compression; free monoid with involution.
}

\subjclass[2010]{
03D05,  	%Automata and formal grammars in connection with logical questions 
20F65,   %Geometric group theory
20F70,  	%Algebraic geometry over groups; equations over groups
	68Q25,  %Analysis of algorithms and problem complexity 
 68Q45. %Formal languages and automata
}

\thanks{
Research supported by the  Australian Research Council  (Future Fellowship  FT110100178), the Swiss National Science 
Foundation (Professorship FN PP00P2-144681/1), and a Universit{\'e} de  Neuch\^atel Overhead Grant.}

\begin{abstract}
We show that, given an equation over a finitely generated free group, the set of all solutions in reduced words forms an effectively constructible EDT0L language. In particular, the set of all solutions in reduced words is an indexed language in the sense of Aho. The language characterization we give, as well as further questions about the existence or finiteness of solutions, follow from our explicit construction of a finite directed graph which encodes all the solutions. Our result incorporates the recently invented recompression technique of Je\.z, and a new way to integrate solutions of linear Diophantine equations into the process.

As a byproduct of our techniques, we improve the complexity from quadratic nondeterministic space in previous works to ${\ensuremath{\mathsf{NSPACE}}}(n\log n)$ here.  
\end{abstract}

\maketitle

\subsection*{Introduction}
In this paper we prove that the set of all solutions, as reduced words, to an equation in a finitely generated free group or free monoid with involution, has a description as an EDT0L language. Furthermore, we show that this description can be computed in 
 $\NSPACE(n\log n)$, where $n$ is the length of the equation plus  
 the number of generators of the group or monoid.

We construct a finite graph, of singly exponential size $2^{\Oh(n\log n)}$, with nodes labeled by equations of bounded size 
plus some additional data, and directed edges corresponding to  transformations applied to the equations. More precisely, the edges are labeled by endomorphisms of a free monoid $C^*$, where $C$ is a finite alphabet which includes the  group or monoid generators. The graph, viewed as a nondeterministic finite automaton, produces a rational language of \Endos of $C^*$. We show that the set of all such endomorphisms applied to a particular `seed' word gives the full set of solutions to the input equation as reduced words. Thus, by the definition of Asveld \cite{Asveld1977}, we obtain that the solution set is an EDT0L language, and therefore an indexed language. 
Moreover, one can decide if there are zero, infinitely or finitely many solutions simply by checking if the graph is empty, 
has directed cycles or not.  Our complexity results concerning these decision problems are the best known so far; and with respect to space complexity they might be optimal.

The first algorithmic description of all solutions to a given equation over a free group is due to Razborov \cite{raz87,  raz93}. 
His description became known as a \emph{Makanin-Razborov diagram}, and this concept plays a major role in the positive solution of Tarski's conjectures about the elementary theory in free groups 
\cite{KMIV06, sela13}. While Makanin-Razborov diagrams are also graphs whose edges are labeled by morphisms, these morphisms are group homomorphisms, and it is unfeasible to use this approach to directly obtain solutions in freely reduced words, as the cancellation within group elements after applying a homomorphism cannot be controlled. Also, it is extremely complicated to explicitly produce a Makanin-Razborov diagram for a given equation, and this has been done only in very few cases (\cite{MR2542213}).

A description of solution sets as EDT0L languages
was known before only for quadratic word equations over a free monoid by \cite{FerteMarinSenizerguesTocs14}; 
the recent paper 
\cite{DiekertJP2014csr}
 did not aim at giving such a structural result. The present paper builds on the techniques in \cite{DiekertJP2014csr}, 
 in particular we make use of Je\.z's {\em recompression} method \cite{Jez16jacm}.
There is also a description of all solutions for a word equation over free monoids by Plandowski in~\cite{Plandowski06stoc}.
His description is given by some graph which can be  computed in singly exponential time, but without the aim to give any formal language characterization. 

In this paper we restrict ourselves to equations in free groups or free monoids with involution, and their solution sets in reduced words. It is possible to generalize our construction 
 in several directions. 
First, we can replace the free group by any finitely generated free product $\P=\star_{1 \leq i \leq s}F_i$ where each $F_i$ is either a free or finite group, or a free monoid with arbitrary involutions.  
Second, we can allow arbitrary rational constraints for free products. We consider Boolean 
formulae $\Phi$, where each atomic formula is either an equation or a {\em rational constraint}, written as
$X \in L$, where $L\sse \P$ is a rational subset. More concretely, 
let $\P$ be a free product as above, $\Phi$ a Boolean formula over equations and rational constraints, and
$\os{X_1,\cdots, X_k}$ any subset of variables.
Then the techniques developed in this paper allow us to prove that $\cSol(\Phi)= \set{\sig(X_1)\# \cdots \#\sig(X_k)}{\sig \ \text {solves $\Phi$ in reduced words}}$ is EDT0L. Moreover, there is an algorithm which takes 
 $\Phi$ as input and produces an NFA $\cA$ 
 such that $\cSol(\Phi)= \set{\phi(\#)}{\phi \in L(\cA)}$. 
The algorithm is nondeterministic and  uses  quasi-linear 
space in the input size  of  $\Phi$.
However, these more technical results are not the scope of the present paper.
They follow  from standard results in the literature and they have been announced in the conference version of this paper which was presented at ICALP 2015, Kyoto (Japan), July 4 -- 10, 2015 \cite{CiobanuDEicalp2015}. Full proofs are in the corresponding paper  on  arXiv \cite{CiobanuDEarxiv2015}.

\subsection*{Article organisation}

In \prref{sec:prelim} we give
preliminary definitions and notations. In \prref{sec:mainresults} we state the main result, \prref{thm:alice}, that solutions in reduced words to equations in either a free group or a free monoid with involution are described by a finite graph or nondeterministic finite automaton (NFA) which can be constructed in nondeterministic quasi-linear space.  The main work of the paper is in \prref{sec:alicemon} which treats the monoid case. We define the NFA in subsection \ref{subsec:nfacA}, and present the proofs that the NFA encodes only  correct solutions (soundness), and all solutions (completeness), in subsections \ref{subsec:sound}  and \ref{subsec:compness}, respectively.  
The most complicated part is the completeness proof, which  involves producing  a path for a given solution from initial to final node  by alternatively expanding and compressing the equation, ensuring that at all times the size of the equation is bounded so that we stay within the graph. 

Once the monoid case is proved, in \prref{sec:alicegroupie} we follow relatively standard methods to reduce the problem of finding solutions in reduced words in a free group to the monoid case.
In the final section we give an explicit example of the alternating expansion-compression procedure.

We stress that the complicated part of the paper is to prove that the NFA we construct encodes exactly all solutions; the specification and construction of the NFA, and hence the EDT0L language description, is extremely simple by contrast.

\section{Preliminaries}\label{sec:prelim}
\subsection{Monoids with involution}\label{subsec:mwi}

An \emph{alphabet} is a finite set whose elements are called 
{\em letters}. 
By $\Gam^*$ we denote the free monoid over the finite set
$\Gam$.  The elements of a free monoid are called 
{\em words}, and the empty word is denoted by $1$. The length of a word $w$ is denoted by $\abs w$, and ${\abs w}_{x}$
counts how often a symbol $x$ appears in $w$.
Let $M$ be any monoid and $u,v\in M$. We write 
$u \leq v$ if $u$ is a {\em factor} of $v$, which means we can write
$v= xuy$ for some $x,y \in M$. We denote the neutral element in $M$ by $1$, and use the notation $\id{C^*}$ for the neutral element in the monoid of \Endos over a free monoid $C^*$.

An \emph{involution} on a set $\Gam$ is a  mapping $x \mapsto \ov x$ such that 
$\overline{\overline{x}} = x$ for all $x\in \Gam$. For example, the identity map is an \invol. An \emph{involution on a monoid} must also satisfy $\overline{xy}=\overline{y}\,\overline{x}$.
Any involution on a set $\Gam$ extends to  $\Gam^*$: for a word $w = a_1 \cdots a_m$ we let  $\ov{w} = \ov{a_m} \cdots \ov{a_1}$; then $\Gam^*$ endowed with the involution is called a {\em free monoid with involution}. 
If $\ov a = a$ for all $a \in \Gam$ then $\ov{w}$ is simply the word $w$ read from right-to-left. 

A {\em \morph} between sets with involution is a mapping  respecting the involution, and a \morph between monoids with \invol is a \hom $\phi: M \to N$ such that 
$\phi(\ov x) = \ov{\phi(x)}$. A morphism  is a {\em $\Del$-\morph} if $\phi(x) = x$ for all $x \in \Del$ where $\Del\sse M$.
In this paper, whenever the 
term ``\morph'' is used, it refers to a mapping which respects the underlying structure, including the \invol. 
All groups are monoids with involution given by 
$\ov x = x^{-1}$; and all group \homs are \morph{s}.

\subsection{Free partially commutative monoids}\label{subsec:fpcm}
Let ${\Del}$ be a finite set with involution. An \emph{independence relation}
 is an irreflexive  relation $\theta \sse {\Del} \times {\Del}$ 
such that $(x,y)\in \theta \iff  (\ov x,\ov y)\in \theta$. Every independence  relation
defines a \emph{free partially commutative monoid with \invol} $M({\Del},\theta)$ by
$$M({\Del},\theta) = {\Del}^*/\set{xy=yx}{(x,y)\in \theta}.$$
These monoids are well-studied in computer science as they form the basic algebraic model for concurrency, see \cite{dr95, kel73, maz77}. In mathematics free partially commutative groups are commonly referred to as right-angled Artin groups (RAAGs). Their study has a long history with strong connections to topology and geometric group theory, see for example \cite{Wise2012}.

In this paper we will need algorithms for equality and factor testing in free partially commutative monoids. This can be done very efficiently: for example, there is a linear time algorithm (\cite{mes97}) to decide on input $u,w\in {\Del}^*$ 
whether $u\leq w$ in $M({\Del},\theta)$. Here we need the uniform version, as follows: 
the input is a tuple $({\Del},\theta, u,w)$ with $u,w \in {\Del}^*$, and the question is whether $u$ is a factor of $w$ in $M({\Del},\theta)$.
This problem can easily be solved in nondeterministic linear space (which suffices for our purposes) by the following argument:  
first find words $p,q \in {\Del}^*$ by scanning $w$ from left to right
and for each position guessing (nondeterministically) whether each corresponding letter belongs to $p, u$ or $q$, requiring that $\abs {puq}= \abs w$ 
(we do this by marking each letter of the input, which requires linear space).
Second, check that the choice of positions assigned to $u$ produces a word that is indeed equal to $u$. Third, check whether $puq$ is equal to $w$ in $M({\Del},\theta)$. For both the second and third steps we use the ``projection lemma'' of \cite{kel73, cl85}: for example, in the  third step we check that $\abs{puq}_a = \abs{w}_a$ for all $a \in {\Del}$,  then we check that 
 the projections of $puq$ and $w$ to $\os{a,b}^*$ yield identical words for all $a,b\in {\Del}$ such that $ab \neq ba$ in $M({\Del},\theta)$.  The projections are obtained by ignoring all letters in $puq$ and $w$ 
which are not in  $\os{a,b}$. 

Another fact about partially commutative monoids that we use later is that for 
$u\in M({\Del},\theta)$ the values $\abs u$ and ${\abs u}_{a}$ are well-defined 
since ${\abs {xy}}_{a} ={\abs {yx}}_{a}$ for all $x,y \in {\Del}^*, a \in {\Del}$. 

We will define free partially commutative monoids through 
``types'' in \prref{subsec:typtheta}, which for simplicity of notation are also  denoted by $\theta$.

\subsection{Languages}\label{subsec:Lang}
\emph{Languages} refer traditionally to subsets of finitely generated free monoids; the class of \emph{regular languages} can be defined via rational expressions, nondeterministic finite automata, 
or recognizability via \hom{s} to finite monoids, to mention just a few of the possible definitions \cite{pin86}. 
These notions generalize to arbitrary monoids, but  lead to different classes, in general.  

We define a \emph{rational subset} in any monoid $M$ by means of  \emph{nondeterministic finite automaton}, NFA for short. 
An NFA is a directed finite graph $\cA$ with initial and final {\em states}, where the
transitions between states are labeled by elements of the monoid $M$. We say that 
$m\in M$ is \emph{accepted} by the automaton $\cA$ if
there exists a path from some initial to some final state such that multiplying the edge labels together in $M$ yields $m$. This defines the accepted 
language $L(\cA) = \set{m \in M}{m \text{ is accepted by }\cA}$. 
Then  $L\sse M$ is \emph{rational} \IFF 
$L$ is accepted by some NFA over $M$ (see \cite{eil74}). 
An NFA is called \emph{trim} if every {state} is on some path from an initial to a final {state}. For a trim NFA $\cA$ we have 
$L(\cA)\neq \es$ \IFF $\cA \neq \es$. 

 We say that $L\sse M$ is \emph{recognizable} if there is a
\hom $\nu: M\to N$ to a finite monoid $N$ such that $L = \oi{\nu}(\nu(L))$. 
The family of recognizable subsets is closed under finite union and complementation (and therefore also under finite intersection), and therefore forms a Boolean algebra. 
For finitely generated free monoids Kleene's Theorem asserts that  a subset is recognizable \IFF it is \emph{rational}; and in this context a rational subset is also called {\em regular}.

In this paper we are mainly interested in rational subsets of free groups $F(\Apos)$, free monoids $A^*$, and monoids $\End(C^*)$ of \Endos over a free monoid $C^*$. If $\abs C \geq 2$, then $\End(C^*)$ is neither free nor finitely generated  and it contains non-trivial finite subgroups.

Suppose we have an NFA where each \tra label is an \Endo in $\End(C^*)$ which is applied in the opposite direction of the transition. If a path  is labelled by the sequence $h_1, \ldots , h_t$, then we can apply 
the \Endo $h=h_1 \cdots h_t$ to an element $u\in C^*$ and the result is 
a word $h(u)=h_1 \cdots h_t(u) \in C^*$. Thus, 
$\set{h(u)}{h \in L(\cA)}$ defines a language in $C^*$. 
This leads to 
the notion of EDT0L, defined next.
 
 \subsubsection{EDT0L Languages}\label{subsec:edtollang}
 The acronym {EDT0L} refers to {\em {\bf E}xtended, {\bf D}eterministic, {\bf T}able, 
{\bf 0} interaction, and {\bf L}indenmayer}. 
There is a vast literature on Lindenmayer systems, see \cite{RozS86}, with various  acronyms such as D0L, DT0L, ET0L, HDT0L and so forth. For more background on Lindenmayer systems 
we refer to \cite{rs97vol1}.
The subclass EDT0L is equal to HDT0L (see for example \cite[Thm.~2.6]{rs97vol1}), and has received particular attention. 
It is a subclass of indexed languages 
in the sense of Aho \cite{Aho68}, see for example
\cite{EhrRoz77}. Indexed languages are context-sensitive, and they strictly contain
all context-free languages. The classes of EDT0L  and 
context-free languages are incomparable \cite{EhrRoz77} and therefore the inclusion 
of EDT0L  into indexed languages is proper. 

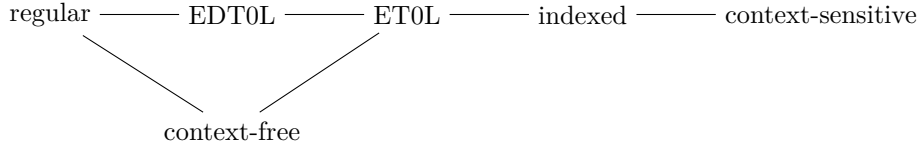
\begin{figure}[htbp]
\begin{center}
  \begin{tikzpicture}[ node distance = 30pt]

 \node (reg) {regular};
  \node (edt) [ right=of reg] {EDT0L};
  \node (cf) [ below=of edt] {context-free}; 
  \node (etol) [ right=of edt] {ET0L};  
    \node (ind) [ right=of etol] {indexed};
  \node (cs) [ right=of ind] {context-sensitive};

    \draw(reg) --  (edt);
        \draw (edt) --  (etol);
    \draw (reg) -- (cf);
            \draw (cf) --  (etol);
                  \draw (etol) --   (ind);
             \draw  (ind) -- (cs);
 \end{tikzpicture}
  \caption{Containments of formal language classes. Each edge from left to right represents strict containment.
 }\label{fig:language_diagram}
\end{center}
\end{figure}

We define EDT0L languages in $A^*$ through a characterization  (using  rational control)
due to Asveld \cite{Asveld1977}, which is the analogue of  Ginsburg and Rozenberg's result for ET0L languages (\cite[Lem.{} 4.1]{GinsburgRoz75}).
We start with some alphabet $C$ such that $A\sse C$,
and a rational set of \Endos $\cR \sse \End(C^*)$.
Note that if $\cR \sse \End(C^*)$ is any subset of \Endos, then 
we can apply $\cR$ to any word $u\in C^*$  and 
we obtain a subset $\set{h(u)}{h \in \cR} \sse C^*$.

\begin{definition}\label{def:edt0lasfeld}
Let $A$ be an alphabet and $L\sse A^*$. 
We say that $L$ is an {\em EDT0L} language if there is an alphabet $C$ with 
$A\sse C$, a 
rational set of \Endos $\cR \sse \End(C^*)$, 
and a letter $c\in C$ 
such that 
$L = \set{{h}(c)}{{h} \in \cR}.$ 
\end{definition}
%%%%%%%%%%%%%%%%
The set $\cR$ is called the {\em rational control}, and 
$C$ the {\em extended alphabet}.

Note that for an arbitrary set $\cR$ of endomorphisms of $C^*$ we have $\set{{h}(c)}{{h} \in \cR} \sse C^*$, but the definition implies that $\cR$ must guarantee ${h}(c)\in A^*$ for all 
${h} \in \cR$.

%%%%%%%%%%%%%%%%%%
\begin{example}\label{ex:edtvd}
Let $A= \os{a,b}$ and $C= \os{a,b, \#}$. Consider four endomorphisms
$f,g_a,g_b,h$ defined as $f(\#) = \#\#$, 
$g_a(\#) = a\#$, $g_b(\#) = b\#$, and $h(\#) = 1$, and on all other letters
$f,g_a,g_b,h$ behave like the identity. 
 Consider the rational language   
$\cR = h\os{g_a, g_b}^* f$ (where endomorphisms are applied right-to-left).
A simple inspection shows  that $\set{\phi(\#)}{\phi \in R} = \set{vv}{v\in A^*}$, which is not context-free.  
\end{example}

\subsection{Complexity}\label{subsec:complex}
We use the standard $\Oh$-notation for functions from $\N$ to $\mathbb{R}_{\geq 0}$. A function $f$ is called \emph{quasi-linear} if $f(n) \in \Oh(n \log n)$. We say that $f$ is \emph{singly exponential}
if $f(n) \in 2^{\Oh(p(n))}$ where $p(n)$ is a polynomial. We also use the standard meaning of complexity classes like $\NP$, $\NSPACE(f)$, $\DSPACE(f)$ and $\DTIME(f)$ as in \cite{pap94}.

Let $\cC$ and $\cD$ two domains and for each $x \in \cC \cup \cD$ we let 
$\gen x \in \os{0,1}^*$ denote some binary encoding. We assume that for every $x\in \cC$ its \emph{input size} is defined as a natural number which might be different from the binary length of $\gen x$.
For example, in our case we define the input size of an equation over a free group or monoid to be the  length of the equation plus  
 the number of generators of the group or monoid. As usual, we omit details on the specific encoding and how to check that a binary string $y$ is of the form 
 $y = \gen{x}$ for some $x\in \cC$. In our case, 
 we content ourselves that the encoding of a word of length $n$ over some alphabet $\Gam$ uses at most $\Oh(n \log \abs \Gam)$ bits and that the 
 check  $y = \gen{x}$  can be done deterministically in linear space with respect to the binary length of $y$.

A function $t:\cC \to \cD$ is computable in 
$\NSPACE(f)$ if there is a nondeterministic Turing machine $M$ with a two-way read-only input tape, a work tape, and a write-only output tape. The input $x \in \cC$ is given as the binary string $\gen x$. 
During the computation the machine writes some binary string on the output tape from left to right such that for the entire computation the size of $M$'s work tape is bounded 
by $\Oh(f(n))$ where $n$ is the input size of $x$. There must be at least one run of the machine where $M$ stops and if $M$ stops, then output must be the correct value
$\gen{f(x)}$. 
We rely on a result by Immerman and Szelepcs{\'e}nyi which implies that $\NSPACE(f)$ is (effectively) closed under complementation 
for functions $f$ satisfying $\log n \in \Oh(f(n))$ \cite[Theorem 7.6]{pap94}). As a consequence, 
``trimming'' an automaton will become possible in $\NSPACE(n\log n)$ in \prref{subsec:trimcA}.
Recall that every $\NSPACE(n\log n)$-computable function can also be simulated by some deterministic algorithm in time $2^{\Oh(n\log n)}$ (see  \cite[Theorem 3.3]{pap94}).

\subsection{Word equations over monoids with rational constraints}\label{subsec:wemrnf}

Let $A$ be an alphabet of \emph{constants} with involution and let $\pi: A^* \to M$ be a surjective \morph onto a monoid with \invol $M$. Furthermore, let $\cX$ be a set of {\em variables}. 
We may assume that $\cX$ is endowed with an \invol without fixed points. Thus, $X\neq \ov X$ for all $X\in \cX$. 
\begin{definition}\label{def:WE}
A \emph{word equation with rational constraint over $M$} is a pair $(U,V)$ of words 
$U,V\in (A \cup \cX)^*$ which has the following attributes.
\begin{itemize}
 \item 
The \emph{input size} of the equation is defined as $\abs{A} + \abs {UV}$.
\item The rational constraint is given by a \hom $\nu:(A\cup \cX)^*\to N$, where $N$ is a finite monoid. 
\item A \emph{\solu} of the equation $(U,V)$ with constraint $\nu$
is given by a map $$\sig: \cX \to A^*$$
which extends to a \hom  $\sig: (A \cup \cX)^* \to A^*$ that fixes the constants, such that for all $X\in \cX$: 
\begin{enumerate}
\item $\sig(\ov X) = \ov{\sig(X)}$, \hfill {\em i.e.} $\sig: \cX \to A^*$ is a \morph,
\item $\nu(X) =\nu\sig(X)$, \hfill {\em i.e.}   
the solution respects the constraint on $X$,
\item $\pi\sig(U)= \pi\sig(V)$, \hfill {\em i.e.} $\sig(U)$ and $\sig(V)$
are equal in the monoid $M$.
\end{enumerate}
\end{itemize}
\end{definition}

Note that we constrain the solutions to be in a recognizable set (see the definitions in \prref{subsec:Lang}), but in this case the notions of recognisable and rational sets are the same,
since we are in the free monoid $(A \cup X)^*$.

\section{Solution sets for equations over free monoids with involution and free groups: the main results} \label{sec:mainresults}

Let $\Apone = \Apos \cup \set{\ov a}{a \in \Apos}$ be a finite alphabet with \invol and assume that 
the involution is without fixed points: $\ov a \neq a$ for all $a \in \Apone$. We let $\FGA$ be the free group over $\Apos$ and we realize the involution inside $\FGA$  by $\ov a = \oi a$. 
Thus
$$\Apone = \Apos \cup \set{\oi a}{a \in \Apos} \sse \FGA \sse \Apone^*.$$
Following standard terminology,  a word $w\in \Apone^*$ is  {\em reduced} if it does not contain any factor $a \ov a$ where $a \in \Apone$. The set 
of reduced words is a regular subset $\F\sse \Apone^*$ which is closed under \invol. We fix  $\F$ as a set of normal forms for $\FGA$; thus, as a set, we identify $\FGA$ with $\F$. 
The inclusion $\Apone \sse \FGA$ induces the canonical projection $\pi: \Apone^* \to  \FGA$. Given a word $w$ we obtain $\pi(w)$ by a repeated cancellation of all factors $a \ov a$; and $w$ is reduced \IFF $\pi(w) = w$. 

We shall also use a special symbol $\#$ which is not in $\Apone$ and serves as ``marker''. 
For example, we will encode a system of equations
$\set{(U_i,V_i)}{1 \leq i \leq s}$ as a single
equation
\begin{equation}\label{eq:syseq}
(U_1 \# \cdots \# U_s, V_1 \# \cdots \# V_s).
\end{equation}
If we require that no $\sig(X)$ is allowed to use $\#$, where $X$ is a variable, then 
\begin{equation}\label{eq:syseqtwo}
\forall i: \pi\sig(U_i) = \pi\sig(V_i) \iff \pi\sig(U_1 \# \cdots \# U_s)=\pi\sig( V_1 \# \cdots \# V_s)\end{equation}
since positions of the $\#$ letters must be the same on both sides.
In our context, rational constraints are the most convenient way to ensure that no $\#$ appears in $\sig(X)$, see \prref{subsec:mN}.
We let 
$$ A = \Apone \cup \os{\#}$$
with $\ov \# = \#$. Thus, $\os{1,\#}$ forms a group which is isomorphic to $\Z /2 \Z$ if we let $\oi \# = \#$.

In order to have a uniform statement we let 
$\MMA$ be either the free monoid with involution $A^*$ or the free product of the free group $\FGA$ with the cyclic group $\os{1,\#}$ of order $2$. 
Thus, henceforth:
$$\MMA = A^* \quad \text{ or }  \quad \MMA = A^*/\set{a \ov a = 1}{a \in A},$$  and
$\pi: A^* \to \MMA$ is the canonical projection induced by the inclusion $A \sse \MMA$. In both cases $\pi$ is injective on $\F \sse A^*$, and if $\MMA = A^*$, then $\pi$ is just the identity.  

Given a word equation $(U,V)$ with $UV \in (\Apone\cup \cX)^*$  over $\MMA$, 
we say that a \solu $\sig$ is a \emph{solution in reduced words} if 
$\sig(X) \in \F$ for all $X\in \cX$. We will realize this condition as a 
rational constraint $\mu$ into a finite monoid $N$ with a zero element $0\in N$ such that $\mu(w) \neq 0$ \IFF $w \in \F$.

\begin{theorem}\label{thm:alice}
Let $(U,V)$ be an equation over $\MMA$ of input size $n = \abs A + \abs{UV}$ (according to \prref{def:WE}) 
and in variables $X_1, \ov{X_1} \lds X_m, \ov{X_m}$.   
Then 
 there is an $\NSPACE(n \log n)$ algorithm 
which computes $c_1 \lds  c_m \in C$, where $C \supseteq A$ is an extended alphabet of size  $\abs C \in \Oh(n)$, and a trim NFA $\cA$ which produces the set of solutions in reduced words. That is,
\begin{equation}\label{eq:alice} \begin{split}
 \{(\sig(X_1) \lds  \sig(X_m))&
 \in \F\times \cdots \times \F\mid \pi\sig(U)=\pi\sig(V)\} \\ 
 =  \{(h(c_1) \lds  h(c_m))&\in C^*\times \cdots \times C^*\mid h\in L(\cA)\}.
 \end{split}
\end{equation}
  The NFA has the following properties.
 \begin{enumerate}
\item It is nonempty \IFF the equation $(U,V)$ has some solution. 
\item It has a directed cycle \IFF $(U,V)$ has infinitely many solutions. 
\end{enumerate}
These properties can also be decided in $\NSPACE(n \log n)$. 
\end{theorem}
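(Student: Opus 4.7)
The plan is to first establish the theorem for the free monoid with involution case $\MMA = A^*$, which is the main content of \prref{sec:alicemon}, and then to reduce the free group case to it, as in \prref{sec:alicegroupie}. The ``solutions in reduced words'' condition is realised as a rational constraint $\mu:A^*\to N$ into a finite monoid with zero so that $\mu(w)\ne 0$ \IFF $w\in\F$; for $\FGA$ the constraint is further enriched so that no cancellation can occur between $\sig(X)$ and the neighbouring letters in $U$ or $V$, which puts reduced-word solutions over $A^*$ into bijection with solutions in $\FGA$.

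I would construct the NFA $\cA$ explicitly. Its nodes are annotated equations $(U',V')$ over an extended alphabet $C\supseteq A$ of size $\Oh(n)$, together with a partial-commutation type and a constraint \hom into a fixed finite monoid, all subject to a uniform size bound $\abs{U'V'}=\Oh(n)$. The edges are labelled by \Endos of $C^*$; each edge records the inverse of one of Je\.z's \emph{recompression} moves---pair compression $ab\mapsto c$ or block compression $a^k\mapsto c$---possibly preceded by a variable step that pushes one letter from some $\sig(X)$ into the ambient word. The initial node is (essentially) the input equation, and the unique final node is the trivial equation $(1,1)$, from which one reads off the seed letters $c_1,\dots,c_m$ encoding the empty value of each variable.

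The bulk of the work then splits into two parts. \emph{Soundness}: along any path from initial to final, composing the edge \Endos right-to-left and applying the composite to $c_i$ yields a word in $\F$, and these words form a solution to $(U,V)$; this is a short induction, since each edge inverts a legitimate compression step and $\mu$ propagates reducedness. \emph{Completeness} is the central technical difficulty: for each solution $\sig$ one must exhibit an accepting path by alternating \emph{expansion} moves (which modify the equation so that the following recompression reflects the shape of $\sig$) with \emph{compression} moves. The main obstacle is the amortised length argument behind Je\.z's technique: expansions must be scheduled so that every compression returns the equation below the prescribed size bound, thereby keeping the entire computation inside the finite graph.

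The remaining claims then follow from the construction. The set of states has cardinality $2^{\Oh(n\log n)}$, since each state is described by a word of length $\Oh(n)$ over an alphabet of size $\Oh(n)$ together with a constant-size annotation; individual states and individual transitions can be checked in quasi-linear space using the nondeterministic factor test of \prref{subsec:fpcm}, so $\cA$ is explorable on the fly in $\NSPACE(n\log n)$. Nonemptiness is reachability, and because along any cycle in the trim part of $\cA$ the composite \Endo strictly increases $\abs{h(c_i)}$ for some $i$, the existence of such a cycle corresponds to infinitely many distinct solutions; by Immerman--Szelepcs\'enyi (see \prref{subsec:complex}), trimming, reachability and cycle detection all stay in $\NSPACE(n\log n)$.
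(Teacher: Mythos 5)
Your overall architecture (monoid case first, recompression-based NFA with endomorphism labels, soundness/completeness split, Immerman--Szelepcs\'enyi for trimming) matches the paper, but two steps as you describe them would fail. First, the group case: you propose to enrich the rational constraint so that ``no cancellation can occur between $\sig(X)$ and the neighbouring letters'' and claim a bijection between such monoid solutions and solutions in $\FGA$. This is false: group solutions genuinely involve cancellation of unbounded depth between adjacent variable images (e.g.\ $XY=a$ with $\sig(X)=ab$, $\sig(Y)=\ov b$), and a constraint morphism into a finite monoid only sees data such as the first and last letter of $\sig(X)$, so forbidding cancellation discards these solutions rather than encoding them. The paper instead triangulates the equation and applies \prref{lem:caytree}: each group equation $xy=z$ in reduced words is replaced by the monoid equations $x=PR$, $y=\ov R Q$, $z=PQ$ with \emph{fresh variables} $P,Q,R$ that explicitly carry the cancelled part, and only then is the monoid machinery invoked. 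Second, your final node is the trivial equation $(1,1)$ with seed letters ``encoding the empty value of each variable''; but then nothing in the final state records the $X_i$, so there is no way to prove the set equality (\ref{eq:alice}). The paper's device is to build $\Winit$ as in (\ref{eq:Winit}) with factors $\#X_i\#$, so that final states have prefix $\#c_1\#\cdots\#c_m\#$ and soundness (\prref{prop:backandforth}) yields $\sig(X_i)=h_1\cdots h_t(c_i)$ literally; without this (or an equivalent tracking mechanism) the EDT0L description of the solution set does not follow.

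Two further gaps. Your criterion ``along any cycle the composite endomorphism strictly increases $\abs{h(c_i)}$ for some $i$'' is not justified: substitution and alphabet-reduction transitions are labelled by $\id{C^*}$, and a compression occurring on a cycle need not touch the letters in the orbit of the seeds, so the claim needs proof and is not how the equivalence is established. The paper proves \prref{cor:infsol} by a weight argument: every cycle must contain a substitution of type (ii) or (iii) (since all other transitions strictly decrease the state weight), each such substitution strictly decreases the solution weight $\Abs{\alp,\sig}$ in the forward direction by \prref{lem:varsub}\,(iv), hence paths winding around a cycle arbitrarily often force solutions of unbounded total length at the initial state. Finally, completeness---which you rightly call the central difficulty---is only named, not carried out: the actual argument (\prref{subsec:compness}, \prref{lem:complett}) needs the preprocessing/block-compression/pair-compression loop with partially commutative types $\theta(c_{\lam,b})=c_b$, the choice of the partition $(L,R)$ maximizing compressed visible pairs, and the marking argument bounding unreduced factors $a\ov a$ by $\abs\Winit$, in order to obtain the explicit bounds $\bcc$ and $\pcc$ that keep every intermediate state inside the finite automaton. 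As it stands the proposal asserts that such a schedule exists rather than proving it.
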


Recall that the input size $n$ used in the statement of the theorem might be  smaller than the length of some binary encoding for the input. 
If the number of distinct symbols used in the equation is constant, then our algorithm is quasilinear in the input size; if, on the other hand, the number of distinct symbols used in the equation is linear, then we need linear space, only.

Theorem \ref{thm:alice} yields the characterization of solutions sets as \edtol languages. To do so, we identify a tuple of words
$(w_1 \lds w_k)\in \F$ with the single word 
 $w_1\# \cdots \#w_k\in A^*$.

 Let $(U,V)$ be an equation as in \prref{thm:alice}. For any subset $\os{Z_1 \lds Z_k}$  of variables appearing in $UV$ we define the solution set  
as 
\begin{equation}\label{eq:solset}
\cSol_\cZ(U,V)= \set{\sig(Z_1)\# \cdots \#\sig(Z_k)}{\sig \text { solves $(U,V)$ in reduced words}}.
\end{equation}
Note that for $k = 0$ we have $\cSol_\es(U,V)=\es$ if
the equation $(U,V)$ has no solution and $\cSol_\es(U,V)=\os 1$ otherwise. 
Considering subsets of variables allows for some flexibility. In particular, 
we can introduce auxiliary variables which do not impact the solution set. If, however, every variable occurring in $UV$ is either of the form $Z_i$ or $\ov{Z_i}$ for some $1 \leq i \leq k$, then we say that 
$\cSol_\cZ(U,V)$ is a \emph{full solution set}.  

%%%%%%%%%%%%%%%%%%%%%%%%%%%%%%
\begin{corollary}\label{cor:alice}
Let  $(U,V)$ be an equation as in \prref{thm:alice} and let $\os{Z_1 \lds Z_k}$  be any subset of variables appearing in $UV$. 
Then $\cSol_\cZ(U,V)$ is an \edtol language. More precisely, if $\cA$ is the trim NFA constructed in \prref{thm:alice}, then we can find $c'_1, \ldots, c'_k\in C$ such that
$$\cSol_\cZ(U,V)= \set{h(c'_1\# \cdots \#c'_k)}{h\in L(\cA)}.$$ 
In particular, the full solution set is \edtol.
\end{corollary}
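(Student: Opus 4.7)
The plan is to leverage \prref{thm:alice} almost directly. From the theorem we obtain an extended alphabet $C \supseteq A$, letters $c_1, \ldots, c_m \in C$, and a trim NFA $\cA$ whose accepted endomorphisms, applied componentwise, enumerate precisely the reduced-word solution tuples $(\sig(X_1), \ldots, \sig(X_m))$. Given the chosen subset $\os{Z_1, \ldots, Z_k}$ of variables, each $Z_i$ coincides with some $X_{j_i}$, and I would set $c'_i := c_{j_i}$, so that for every $h \in L(\cA)$ corresponding to a solution $\sig$ one has $h(c'_i) = \sig(Z_i)$.

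The only nontrivial point is to verify that every endomorphism $h \in L(\cA)$ fixes the marker $\#$. This should follow from the explicit construction carried out in \prref{sec:alicemon}: the transition endomorphisms are designed to compress or expand factors associated with \emph{variables}, whereas constants in $A$ -- in particular $\#$ -- are left untouched. Granting this, the key identity
$$h(c'_1 \# c'_2 \# \cdots \# c'_k) = h(c'_1) \# h(c'_2) \# \cdots \# h(c'_k) = \sig(Z_1) \# \sig(Z_2) \# \cdots \# \sig(Z_k)$$
immediately yields the set equality claimed in the corollary; both inclusions are obtained by running the bijection of \prref{thm:alice} in the two directions.

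To recognize $\cSol_\cZ(U,V)$ as EDT0L in the sense of \prref{def:edt0lasfeld}, which requires a single seed letter rather than a word, I would enlarge $C$ by a fresh letter $c^*$ and introduce one auxiliary endomorphism $g \in \End((C \cup \os{c^*})^*)$ defined by $g(c^*) = c'_1 \# \cdots \# c'_k$ and $g(x) = x$ for $x \in C$. Prepending a single new transition labeled $g$ from a new initial state to the original initial states of $\cA$ produces an NFA over $\End((C \cup \os{c^*})^*)$ whose accepted rational language of endomorphisms $\cR$ satisfies $\set{h(c^*)}{h \in \cR} = \cSol_\cZ(U,V)$, exhibiting $\cSol_\cZ(U,V)$ as EDT0L with extended alphabet $C \cup \os{c^*}$ and seed letter $c^*$. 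Observing that $A \sse C \cup \os{c^*}$, and that by hypothesis the outputs lie in $A^*$, completes the verification against \prref{def:edt0lasfeld}.

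The main obstacle I anticipate is confirming that the endomorphisms produced by the construction truly act as the identity on $\#$. If the construction instead treats $\#$ as just another constant subject to alphabet renaming, one can always enforce invariance of $\#$ by post-composing every transition label with the projection that freezes $\#$, or equivalently by arranging that $\#$ never enters the image of a variable slot; either adjustment is routine and preserves both the rationality of the control set and the space bound of \prref{thm:alice}.
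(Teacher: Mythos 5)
Your proof of the displayed set equality is essentially the paper's one-line argument: pick $c'_i=c_{j_i}$ where $Z_i=X_{j_i}$ and invoke \prref{thm:alice}. Your worry about $\#$ is already settled by the construction rather than needing any patch: every transition label is either $\id{C^*}$ (substitution transitions) or an $(A\cup\cX)$-\morph that leaves letters outside $B'$ invariant (compression transitions), so every $h\in L(\cA)$ fixes $A$ pointwise, in particular $h(\#)=\#$; moreover soundness (\prref{prop:backandforth}) gives $\abs{h(c)}_\#=0$ for $c\in B$, so $h(c'_1\#\cdots\#c'_k)=\sig(Z_1)\#\cdots\#\sig(Z_k)$ exactly as you claim. (Your fallback of ``post-composing with a projection that freezes $\#$'' would not be a meaningful repair if a label did move $\#$, but it is moot.)

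The one step that fails as written is your reduction to a single seed letter. In the paper's convention a path $V_0\arc{h_1}\cdots\arc{h_t}V_t$ contributes the label $h_1\cdots h_t$, evaluated with the rightmost factor first, i.e.\ the edge entering the \emph{final} state acts first --- this is precisely the remark that the start states of the rational control correspond to the final states of $\cA$. Prepending your $g$ (with $g(c^*)=c'_1\#\cdots\#c'_k$) at the \emph{initial} end yields accepted endomorphisms $g\,h_1\cdots h_t$, and since each $h_i$ fixes the fresh letter $c^*$, one gets $g\,h_1\cdots h_t(c^*)=g(c^*)=c'_1\#\cdots\#c'_k$; your construction would thus produce only the singleton consisting of the seed word, not $\cSol_\cZ(U,V)$. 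The repair is immediate: attach the $g$-labelled edge at the other end, e.g.\ add a new unique final state reached from each original final state by an edge labelled $g$, so that accepted labels are $h_1\cdots h_t\,g$ and $h_1\cdots h_t\,g(c^*)=h_1\cdots h_t(c'_1\#\cdots\#c'_k)$ as desired. Note the paper itself sidesteps the seed-word-versus-seed-letter point entirely (its proof is just the correspondence $Z_j\leftrightarrow X_i$), so your extra care is welcome --- it just has to respect the direction in which the morphisms compose.
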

\begin{proof}
The language characterization follows from the Definition \ref{def:edt0lasfeld} of an \edtol language, given that each $Z_j$ corresponds to some $X_i$ in \prref{thm:alice}.
\end{proof}

Note that 
\prref{thm:alice} shifts the traditional  perspective from 
 solving an equation to  an effective construction of some NFA
 producing an EDT0L set. Once the NFA is constructed, the existence of a solution, or whether the number of solutions in reduced words is zero, finite or infinite, 
become graph properties of the NFA. Thus, the algorithmic difficulty of solving equations and describing their solution set reduces to the complexity of building a nondeterministic finite automaton for a given input.

\section{Proof of Theorem~\ref{thm:alice} in the monoid case: $\M(A) = A^*$}\label{sec:alicemon}

In this section we prove Theorem~\ref{thm:alice} in the monoid case. Before delving into the  proof, we introduce in Subsections \ref{subsec:tie}--\ref{subsec:traG} further necessary terminology and notation.  

Let $\MMA=A^*$. In this case $\pi= \id{A^*}$ and so $\pi$ is not needed in the rest of this section.
Without restriction, we may assume $\abs{\Apos} \geq 1$. 

Let
$\Xinit = \os{X_1,\ov{X_1} \lds X_m,\ov{X_m}}$ be the initial set of variables, that is,   for each $1\leq i\leq m$ either $X_i$ or $\ov{X_i}$ occur in $UV$.

Let $\kappa \in \Oh(1)$ be some ``large enough'' constant, whose exact value will be discussed in \prref{subsec:sizeC}, and 
 choose 
an alphabet $C$ of {\em constants} and an alphabet $\OO$ of {\em  variables} such that 
$$C\supseteq A, \abs C = \kappa \cdot n \text{ and } \OO\supseteq\Xinit\ , \abs \OO = 6n.$$   
Fix $\Gam= C \cup \OO$. 
We assume that $C$ and $\OO$ are sets with involution and that, inside $\Gam = C \cup \OO$, the marker $\#$ is the only self-involuting symbol. Thus, $\ov \# = \#$ and $\ov x\neq  x$ for all $x\in \Gam\sm \os{\#}$.

By $\Sig$ we denote the set of $C$-\morph{s} $\sig: \Gam^* \to C^*$. 
Every solution will be drawn from $\Sig$.

\subsection{The initial word equation $\Winit$}\label{subsec:tie}
For technical reasons we need that for every variable $X_{i}$ which appears in $UV$ there is some factor $\#X_i\#$ appearing 
in the initial equation. 
Instead of viewing equations as equalities between two words $U$ and $V$, we will treat equations as a statement about a single word $W \in \Gam^*$, as follows. This will require us to redefine the notion of \solu as well. 

We define the {\em initial equation} $\Winit\in (A \cup \Xinit)^*$  as: 
\begin{equation}\label{eq:Winit}
\Winit=  \#X_1\#\cdots \#X_m \# U\# V\#\ov {U}\# \ov {V}\# \ov{X_m}\#\cdots \#\ov{X_1}\#.
\end{equation}

Then for every $\sig \in \Sig$ we have 
$$\sig(U)=\sig(V) \iff \sig(\Winit)=\sig(\ov \Winit)$$
and 
  \begin{align*}
 \{(\sig(X_1) \lds  \sig(X_m))&
 \in \F\times \cdots \times \F\mid \sig \in \Sig \wedge \sig(U)=\sig(V)\} \\
 =  \{(\sig(X_1) \lds  \sig(X_m))&
 \in \F\times \cdots \times \F\mid  \sig \in \Sig \wedge \sig(\Winit)=\sig(\ov \Winit)\}.
\end{align*}
We have the following symmetry: 
if  $w\leq \Winit$ is a factor and no $\#$ appears in $w$, then 
 $\ov w\leq \Winit$, too.
The number of $\#$ letters in $\Winit$ is odd, and there is a distinguished $\#$ exactly in the middle of $\Winit$.  

Observe that $\Winit$ is longer than $UV$, but clearly linear in  $n$.
More concretely, since $m \leq \abs{UV}$ and $n=\abs{UV}+\abs{A}>\abs{UV}+1$, we get the bound:
\begin{equation}\label{eq:UVwinit}
\abs \Winit \leq 4m+5+2\cdot \abs{UV}\leq 6\cdot\abs{UV} +  5 < 6(\abs{UV}+1)< 6n.
\end{equation}
Also observe that $\sum_{X\in \Xinit}\abs{\Winit}_X \leq 2m+2\abs{UV}\leq 4n$.

%%%%%%%%%%%%%%%%%%%%%%%%%%%%%%%%%%%%%%%%%%%%
\subsection{The finite monoid $N_\F$}\label{subsec:mN}
%%%%%%%%%%%%%%%%%%%%%%%%%%%%%%%%%%%%%%%%%%%%
In order to ensure that solutions are in reduced words which do not contain the symbol $\#$, 
 we introduce a morphism to a fixed finite monoid $N_\F$ which 
plays the role of (a specific)
rational constraint. 
We define $N_\F$ as follows: $N_\F = \os{1,0} \cup (\Apone \times \Apone)$ with multiplication given by $1\cdot x = x \cdot 1 = x$, 
$0\cdot x = x \cdot 0 = 0$, and 
\begin{equation*}
\begin{array}{llllll}
(a,b)\cdot (c,d) = \left\{\begin{array}{llllll} 
(a,d) &&  \mathrm{if }\;  b\neq \ov c\\
\; 0 &  & \mathrm{otherwise.}
\end{array}\right.\end{array}
\end{equation*}
The monoid $N_\F$ has a natural involution given by $\ov 1 = 1$, $\ov 0 = 0$, and 
$\ov{(a,b)} = (\ov b , \ov a)$.

The \morph{s} to $N_\F$ are 
defined on subsets of $\Gam$, and although they change during the algorithm, they always extend the following fixed 
\morph $$\mu_0:A^* \to N_\F$$ which is defined  by $$\mu_{0}(\#) =0, \ \ \mu_{0}(a) = (a, a)$$ for $a 
\in \Apone$. It is clear that $\mu_{0}$ respects the involution and $\mu_{0}(w) = 0$ if and only if
either $w$ contains $\#$ or $w$ is not reduced.
If, on the other hand, $1 \neq w\in \Apone^*$ is reduced, then 
$\mu_{0}(w) =(a,b)$, where $a$ is the first and $b$ the last letter of $w$. 
An additional feature is that $\mu(w) = 1$ \IFF $w$ is the empty word. 

Defining $\mu(X)$ for a variable $X$ has the following meaning for a \solu 
$\sig$ with $\sig(X) \in \Apone^*$: 
the value $\mu(X) = 0$ is not possible in any \solu, $\mu(X) = 1$ implies $\sig(X) = 1$, and 
$\mu(X) = (a,b) \iff \sig(X) \in \F \cap  a\F \cap \F b$.

%%%%%%%%%%%%%%%%%%%%%%%%%%%%%%%%%%%%%%%%%%%%
\subsection{Types}\label{subsec:typtheta}
%%%%%%%%%%%%%%%%%%%%%%%%%%%%%%%%%%%%%%%%%%%%

Later in the proof we will need to perform compression of large blocks of letters in an efficient manner. This will be achieved by putting a partially commutative structure on the monoid we work with. The partial commutativity will be induced by \emph{types}, which we introduce below. The basic idea is that we assign a variable $X$ the ``type" $\theta(X) =c$ when we predict that in some solution
$\sig(X) \in c^*$ (so $X$ and $c$ commute), and we assign a constant $b$ the ``type" $\theta(b)=c$ when we rename some letters $b$ as $c$.

Besides the initial alphabet $A$ and the global alphabet $C$, we also need a \emph{current} alphabet of constants $B$, where $A \sse B= \ov B \sse C$, and a \emph{current} set of variables $\cX = \ov \cX \sse \OO$.
Let $\Del =B\cup \cX$. A \emph{type} is a partially defined 
function $\theta:(\Del\sm A) \to (B\sm A)$
which respects the involution. We identify  $\theta$ with the relation 
$\set{(\theta(x), x) \in \Del\times \Del}{\theta(x) \text{ is defined}}$.
We obtain an independence relation
$${\theta} = \set{(\theta(x),x)\in \Del \times \Del}{\theta(x) \text{ is defined for } x}$$ 
and hence a free partially commutative monoid 
$$M(\Del,{\theta})= \Del^*/\set{x\theta(x)= \theta(x)x}{\theta(x) \text{ is defined for } x}.$$ 
If the domain where $\theta$ is defined is empty, then 
$M(\Del,{\theta})= M(\Del,\es)$ is the free monoid $\Del^*$. 
\begin{remark}\label{rem:thetalength}
By definition, the size $\abs \theta$ is bounded by $\abs \Del$. Hence, it is linear in $n$ and the specification of $\theta$ needs $\Oh(n \log n)$ bits.
\end{remark}

\begin{definition}\label{def:monstruc}
Let $B$ satisfy $A \sse B= \ov B \sse C$,  
$\cX = \ov \cX \sse \OO$, and $\theta$ be a type. 
The notation
$$M(B,\cX,\theta,\mu)$$ denotes the free partially commutative monoid with \invol
$M(B \cup\cX,\theta)$, 
 equipped with a \morph $\mu: M(B \cup\cX,\theta) \to N_\F$ such that $\mu(a) = \mu_0(a)$ for all $a \in A$, where $\mu_0: A^* \to N_\F$ is the \morph specified in \prref{subsec:mN}. 
We call $M(B,\cX,\theta,\mu)$ a \emph{structured monoid}.

A \emph{\morph} $\phi$ from $M(B,\cX,\theta,\mu)$ to $M(B',\cX',\theta',\mu')$
is a \morph of monoids with \invol $\phi:M(B,\cX,\theta,\mu)\to M(B',\cX',\theta',\mu')$ such that $\mu' \phi = \mu$.
\end{definition}

\prref{def:monstruc} implies that 
whenever $\theta(x)$ is defined, then $\mu(x\theta(x))= \mu(\theta(x)x)$
(because $\mu$ is a \hom). Henceforth we use the following conventions. If $B'\sse B$ and $\cX'\sse \cX$ with $A \sse B'= \ov {B'}$ and
$\cX' = \ov {\cX'}$, then  
$M(B',\cX',\theta,\mu)$ denotes the structured monoid 
$M(B',\cX',\theta',\mu')$ where $\theta'$ and $\mu'$ are induced by the restrictions 
of $\theta$ and $\mu$ to $B'\cup \cX'$.  Moreover, if $M(B,\cX,\theta,\mu)$ is known from the context, then we abbreviate
 $M(B,\es,\theta,\mu)$ as $M(B)$. Since no letter from $A$ is involved in a type, $M(A)$ is the free monoid with \invol $A^*$ together with the \morph 
$\mu_0: A^* \to N_\F$, and
$$M(A) = M(A,\es,\es,\mu_0) \sse M(B) \sse M(B,\cX,\theta,\mu) \arc \mu N_\F.$$

\subsection{Reference list of symbols}\label{subsec:los}
In Table~\ref{table:los} we summarise notations introduced so far for easy reference. These conventions hold unless 
stated otherwise.
They also apply to ``primed'' symbols such as $B'$, where $B'$ denotes
a set with $A \sse B'= \ov{B'} \sse C$. 
\begin{table}[h!]
\begin{tabular}{|l|}
\hline $A_+ \sse \Apone$, the initial alphabets without self-involuting letters.
\\ \hline $\Apone \cup \os \# = A \sse B= \ov B \sse C$.
\\ \hline $\Gam = C \cup \OO$  and $x = \ov x\in \Gam$ implies $x= \#$. 
\\ \hline $\cX = \ov \cX \sse \OO$, the current set of variables.
\\ \hline  $n=\abs{A} + \abs{UV}$, $\abs{C} = \kappa n$ and $\abs{\OO} = 6n.$  
\\ \hline $\Del= B\cup \cX$.
\\ \hline $\mu:\Del\to N_\F$, a \morph with $\mu(a) = \mu_0(a)$ for $a \in A$.
\\ \hline $\theta:(\Del\sm A) \to (B\sm A)$, the type defining an independence relation.
\\ \hline $M(\Del,{\theta})$, free partially commutative monoid defined by $\Del$ and $\theta$.
\\ \hline $M(B,\cX,\theta,\mu)= M(\Del,{\theta})$ together with $\mu$ which extends $\mu_0:A^* \to N_\F$. 
\\ \hline $M(B)$, submonoid of $M(B,\cX,\theta,\mu)$ 
together with the restriction of $\theta$, $\mu$. 
\\ \hline $a,b,c, \ldots$ refer to letters in $C$.
\\ \hline $u,v,w, \ldots$ refer to words in $C^*$.
\\ \hline $X,Y,Z, \ldots$ refer to variables in $\OO$.
\\ \hline $x,y,z, \ldots$ refer to words in $\Gam^*$.\\\hline
\end{tabular}

\caption{Reference list of symbols.}
 \label{table:los}
\end{table}

\subsection{Extended equations and their solutions}\label{subsec:exes}

The states of the NFA we are going to construct correspond to equations derived from our initial equation. Each state contains such an equation, together with the specification of which set of constants, variables and types are used. Moreover, we keep track of the \morph $\mu$ which represents the constraint. Formally, we use the notion of \emph{extended equation}. The notions we introduce now are quite technical, but the reader should keep in mind that the most important fact is that an extended equation contains an equation which is a modification of the initial equation, and this equation has bounded length. When types are present, this equation is an element in a free partially commutative monoid rather than simply a word in a free monoid.

\begin{definition}\label{def:wellf2}
An {\em extended equation} is a tuple $(W,B,\cX,\theta,\mu)$, where $W$ is a word 
in $(B\cup\cX)^*$ such that: 
\begin{enumerate}
\item $\abs W\leq 204 n$. 
\item  If $\theta = \es$, then $\sum_{X\in \cX}\abs{W}_X \leq 4n$.  Otherwise $\sum_{X\in \cX}\abs{W}_X \leq 12n$. 
\item ${\abs W}_{\#} = {\abs \Winit}_{\#}$ and $W\in \#(B\cup \cX)^*\#$. 
\item Every $x$ with $\# \neq x\in B \cup \cX$ satisfies
$\mu(x) \neq 0$.  
\item Every $X\in \cX$ appears in $W$. 
\item If $x \leq W$ is a factor with $\abs{x}_\# =0$, then $\ov x\leq W$, too.
\end{enumerate}
\end{definition}
\begin{remark}
As noted above, the word $W$ (including the notion of factor) is to be seen as representing an element in the free partially commutative monoid $M(B,\cX,\theta,\mu)= M(B\cup\cX,\theta)$. 
Note that by definition $\abs\theta\leq \abs{B\cup \cX}$ (see \prref{rem:thetalength}).
The 
bounds on the length of $W$, and on the number of variables appearing in $W$, will be explained in later sections (\prref{subsec:spaceBlock}), where we will show that we can find all solutions to an input equation by considering modified equations that satisfy these restrictions. What is important for now is that $\abs W\in\Oh(n)$ which means the number of extended equations is finite.
\end{remark}

\begin{definition}\label{def:weightexe}
Let  $V = (W,B,\cX,\theta,\mu)$ be an \exe. The
\emph{weight} $\Abs{V}$ of $V$ is a $4$-tuple of natural numbers,
 $\Abs{V}= (\oo_1,\oo_2,\oo_3,\oo_4)$, where 
\begin{align*}
\oo_1 &=  \abs W, \\
\oo_2 &=  \abs{W} - \abs{\set{a \in B}{{\abs W}_a\geq 1}}, \\
\oo_3 &=  \abs{W} -   \abs{\theta}, \\
\oo_4 &=   \abs B.
\end{align*}
\end{definition}

\begin{remark}We order tuples in $\N^\ell$ lexicographically.
The lexicographic ordering is chosen to function as follows. If we start at an equation of high weight, then the weight of the equation reduces by ``compression''. The first component gives more weight to longer equations. If two equations have the same length, then we declare the equation in which more distinct constants appear to be smaller
because the term $\abs{\set{a \in B}{{\abs W}_a\geq 1}}$ appears with a negative sign.
 If two equations have the same length and use the same number of distinct constants, we declare the equation in which more symbols are typed to be smaller.
Finally, if both equations have the same length, the same number of distinct letters in use, and the same number of typed symbols, then we declare the equation defined over the smaller set $B$ to be smaller.
\end{remark}

Since for every extended equation we have a current alphabet $B$, we need the notion of a $B$-solution, which can then be extended to a solution over the desired alphabet $A$. The next few pages are somewhat technical, but will be used to justify that when we modify extended equations in certain ways, solutions are  preserved.

\begin{definition}\label{def:extequat}
Let $V= (W,B,\cX,\theta,\mu)$ be an extended equation.
\begin{itemize}
\item A {\em $B$-\solu} at $V$ is a $B$-\morph $\sig:M(B,\cX,\theta,\mu)\to M(B,\es,\theta,\mu)$ such that
$\sig(W) = \sig(\ov W)$ and $\sig(X) \in y^*$ whenever $(X,y)\in \theta$.
\item A {\em \solu} at $V$ is a pair  $(\alp,\sig)$ where $\sig$ is a $B$-\solu and 
$\alp: M(B,\es, \theta,\mu) \to  A^*$ is an $A$-\morph (which implies $\mu= \mu_0 \alp$). Moreover, if the set $\cX$ in $V$ is nonempty, then we
require that $\alp$ is nonerasing, that is, $\alp(a) \neq 1$ for all $a\in B$. 

\end{itemize}
The \emph{weights} $\Abs{\alp,\sig}$ and $\Abs{\alp,\sig, V}$ of a \solu  $(\alp,\sig)$ at $V$ are defined as 

\begin{align}\label{eq:weightsolu}
\Abs{\alp,\sig} &= \sum_{X\in \cX}\abs{\alp\sig(X)} \in \N\\
\label{eq:weightsoluV}
\Abs{\alp,\sig, V} &= (\Abs{\alp,\sig}, \Abs V) \in \N^5.
\end{align}
\end{definition}

\begin{remark}\label{rem:nova}
Let $V= (W,B,\cX,\theta,\mu)$ be an extended equation with a solution 
$(\alp,\sig)$.  
Then $\sig(X)$ cannot have any factor of the form $\#$ or $a \ov a$ 
with $a\in B$ because $0 \neq \mu(X)= \mu_0\alp \sig(X)$.
In particular, $\alp \sig(X)$ is a reduced word in $\Apone^*$. Hence,
$\alp \sig$ satisfies the constraint $\alp \sig(X)\in \F$.
Note that a priori we don't exclude the possibility that factors $a\ov a$ appear in $W$, since for example it could be that $\Winit$ contains a factor $aX$ and some \solu $\sig(X)$ begins with 
$\ov a$. 
\end{remark}

The next two lemmas show how \morph{s} between structured monoids transform \solu{s} of \exe{s}. These two lemmas will play an important role in the proof of the  algorithm ``soundness''. 

In the first lemma we consider the  \morph{s} which leave all constants invariant, and conclude that such a morphism decreases the weight of a solution. In addition, this lemma specifies a situation, in part (iv), when the weight strictly decreases.

\begin{lemma}\label{lem:varsub}
Let $V= (W,B,\cX,\theta,\mu)$ and $V'= (W',B,\cX',\theta',\mu')$ be \exe{s} 
such that $\theta(a) = \theta'(a)$ and   $\mu(a) = \mu'(a)$ for all $a \in B$.
In other words, $M(B) = M(B,\es,\theta,\mu)= M(B,\es,\theta',\mu').$

Let $\tau: M(B,\cX,\theta,\mu) \to M(B,\cX',\theta',\mu')$ be a $B$-\morph such that
$W'=\tau(W)$ and  $\alp: M(B) \to  M(A,\es,\emptyset,\mu_{0})$ be an $A$-\morph such that $\alp(a) \neq 1$ for all $a\in B$. 
 
Given a $B$-solution $\sig'$ at $V'$,  
define a $B$-\morph $\sig: M(B,\cX,\theta,\mu) \to  M(B)$
by $\sig(X)=\sig'\tau(X)$. 

Then the following assertions hold. 
\begin{itemize}
\item[(i)] $(\alp,\sig)$ is a \solu at $V$ and $(\alp,\sig')$ is a \solu at $V'$. 
\item[(ii)]$\alp\sig(W) = \alp \sig'(W')$.
\item[(iii)] $\Abs{\alp,\sig}\geq \Abs{\alp,\sig'}$.
\item[(iv)] If there is some $X$ with  $\tau(X) \in \cX'^*a\cX'^*$ where $a \in B$ and $\alp(a) \neq 1$, then  
$\Abs{\alp,\sig} > \Abs{\alp,\sig'}$. 
\end{itemize}
\end{lemma}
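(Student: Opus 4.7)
The plan is to unfold the definitions carefully and track how the morphism $\tau$ transports solutions and weights. For part (i), the claim about $V'$ is essentially given: $\sig'$ is a $B$-solution by hypothesis, and $\alp$ is an $A$-morphism with $\mu_0\alp=\mu=\mu'$ on $B$ and $\alp(a)\neq 1$ for all $a\in B$. Next I would verify $(\alp,\sig)$ is a solution at $V$ by noting that $\sig=\sig'\circ\tau$ is a $B$-morphism since both $\tau$ and $\sig'$ are. The equation $\sig(W)=\sig(\ov W)$ follows from $\sig'(W')=\sig'(\ov{W'})$ via $W'=\tau(W)$ and $\tau(\ov W)=\ov{\tau(W)}$. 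The type condition $\sig(X)\in y^*$ whenever $(X,y)\in\theta$ needs more care: I would first argue that $\tau$ being a morphism of structured monoids forces $\tau(X)$ to commute with $y$ in the target, then combine this with $\sig'$ being a $B$-solution (which controls variables of type $y$) to deduce $\sig'\tau(X)\in y^*$.

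Part (ii) is immediate: $\alp\sig(W)=\alp\sig'\tau(W)=\alp\sig'(W')$. For parts (iii) and (iv), the approach is to expand the weight by splitting each $\tau(X)$ into its letters in $B\cup\cX'$. For $Y\in\cX'$ let $m_Y=\sum_{X\in\cX}|\tau(X)|_Y$ count total occurrences of $Y$ in $\tau(X)$ as $X$ ranges over $\cX$, and analogously $n_a=\sum_{X\in\cX}|\tau(X)|_a$ for $a\in B$. Then
\[
\Abs{\alp,\sig}=\sum_{X\in\cX}|\alp\sig'\tau(X)|=\sum_{Y\in\cX'} m_Y\cdot|\alp\sig'(Y)|+\sum_{a\in B} n_a\cdot|\alp(a)|.
\]
By condition (5) of \prref{def:wellf2}, every $Y\in\cX'$ appears in $W'=\tau(W)$, so it occurs in $\tau(X)$ for some $X\in\cX$ that appears in $W$; hence $m_Y\geq 1$. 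This yields
\[
\Abs{\alp,\sig}\geq\sum_{Y\in\cX'}|\alp\sig'(Y)|=\Abs{\alp,\sig'},
\]
proving (iii). For (iv), the hypothesis $\tau(X)\in\cX'^*a\cX'^*$ for some $X\in\cX$ (occurring in $W$) forces $n_a\geq 1$, and since $\alp(a)\neq 1$ gives $|\alp(a)|\geq 1$, the inequality becomes strict.

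The hardest step will be the type-condition verification in (i): one must unpack the definition of ``morphism of structured monoids'' carefully to ensure that $X$ having type $y$ in the source really forces $\sig'\tau(X)\in y^*$, and not merely to lie in the submonoid of elements commuting with $y$.
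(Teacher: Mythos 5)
Your proposal is correct in substance and follows essentially the same route as the paper's proof: for (ii)--(iv) your argument is the paper's argument, namely write each $\tau(X)$ as a word over $B\cup\cX'$, use \prref{def:wellf2}(5) (applied to $W'=\tau(W)$, and the fact that $\tau$ fixes $B$) to see that every $X'\in\cX'$ occurs among these letters, and observe that the distinguished occurrence of $a$ contributes $\abs{\alp(a)}\geq 1$, which makes the inequality strict in (iv); your multiplicity bookkeeping $m_Y,n_a$ is just a reformulation of the paper's sum over the letters $x_{X,i}$. The only divergence is the type condition in (i), where you are actually more careful than the paper: the paper's proof verifies only $\sig(W)=\sig(\ov W)$ (plus $\alp(a)\neq 1$) and says nothing about $\sig(X)\in y^*$ for $(X,y)\in\theta$. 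Your own caveat about the sketched centralizer argument is well founded: in $M(B,\cX',\theta',\mu')$ the centralizer of a letter $y$ contains, besides $y$ and the variables of type $y$, also any \emph{constant} $z$ with $\theta'(z)=y$ (such typed constants, e.g.\ $c_{\lam,b}$ with type $c_b$, genuinely occur in the construction), and $\sig'$ fixes constants, so commutation of $\tau(X)$ with $y$ together with $\sig'$ being a $B$-solution does not by itself force $\sig'\tau(X)\in y^*$ for an arbitrary $B$-morphism $\tau$. In the automaton this causes no harm, because the substitution transitions of \prref{subsec:subst} only allow $\tau(X)\in\{1,\,uX,\,cX'X\}$ with the type bookkeeping of \prref{lem:ftaus} (variable types agreeing on $\cX\cap\cX'$, and pops of a typed variable only by its own type letter), for which $\sig'\tau(X)\in y^*$ is immediate; if you want the lemma in the stated generality, the cleanest fix is to add this as an explicit hypothesis on $\tau$ rather than trying to derive it from commutation alone.
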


\begin{proof}
\begin{itemize}
\item[(i)] Since $\sig'$ is a $B$-solution at $V'$ we have 
 $$\sig(W)= \sig'\tau(W)= \sig'(\ov {\tau(W)})= \ov{\sig'\tau(W)}= \ov{\sig(W)} = \sig(\ov W).$$
By hypothesis, $\alp(a) \neq 1$ for all $a\in B$. 
 Hence, $(\alp,\sig)$ is a \solu at $V$. Since $M(B) = M(B,\es,\theta,\mu) = M(B,\es,\theta',\mu')$, we have $(\alp,\sig') $ is a \solu at $V'$. 
\item[(ii)] The assertion 
  $\alp\sig(W) = \alp \sig'(W')$ is trivial since $W'= \tau(W)$, 
  $\sig=\sig'\tau$.
\item[(iii)] 
For each $X$ write $\tau(X)$ as a word
$$\tau(X)= x_{X,1} \cdots x_{X,\ell_X}$$
with $x_{X,i}\in B\cup \cX'$. Since every $X'\in \cX'$ appears somewhere in 
$\tau(W)$ 
(by \prref{def:wellf2}(5)) 
we obtain:
$\cX' \sse \bigcup\set{x_{X,i}}{X\in \cX \wedge 1\leq i \leq\ell_X}.$
Hence 
\begin{align}
\Abs{\alp,\sig}&= \sum_{X\in \cX}\abs{\alp\sig(X)} = \sum_{X\in \cX}\abs{\alp\sig'\tau(X)} \\
        &= \sum_{X\in \cX}\abs{\alp\sig'(x_{X,1} \cdots x_{X,\ell_X})} 
        = \sum_{X\in \cX, 1 \leq i \leq \ell_X}\abs{\alp\sig'(x_{X,i})}\\
        \label{eq:greater}&\geq \sum_{X'\in \cX'}\abs{\alp\sig'(X')} = \Abs{\alp,\sig'}.
\end{align}
\item[(iv)] If there is some $X$ with 
$\tau(X) \in \cX'^*a\cX'^*$ where $a \in B$ and $\alp(a) \neq 1$, then some
$x_{X,i}= a\notin \cX'$ with $\alp \sig'(a) = \alp(a)\neq 1$. Hence, 
$\abs{\alp \sig'(x_{X,i})}\geq 1$; and the $\geq$ in (\ref{eq:greater}) becomes the inequality $>$. 
\end{itemize}
\end{proof}

In the second lemma we consider the  \morph{s} which leave all variables invariant, and conclude that such a morphism does not change the weight of a solution. 
\begin{lemma}\label{lem:contr}
Let $V= (W,B,\cX,\theta,\mu)$ and $V'= (W',B',\cX,\theta',\mu')$  be \exe{s}, 
$h: M(B',\cX,\theta',\mu') \to M(B,\cX,\theta,\mu)$ be an $(A\cup \cX)$-\morph, and $\alp: M(B) \to  M(A,\es,\emptyset,\mu_{0})$ be an $A$-\morph where $M(B) = M(B,\es,\theta,\mu)$ such that the following conditions are satisfied. 
\begin{itemize}
\item $W = h(W')$.
\item $\alp(a) \neq 1$ for all $a\in B$.
\item If $\cX \neq \es$, then $h(a') \neq 1$ for all $a'\in B'$.
\item If $\theta(X) = c\in B$ for some $X\in \cX$, then $c\in B'$, $\theta'(X) = c$, and $h(c) \in c^*$.
\end{itemize}

Given a $B'$-solution $\sig'$ at $V'$, define a $B$-\morph $\sig: M(B,\cX,\theta,\mu) \to  M(B)$
by $\sig(X)=h\sig'(X)$. 
Then 
$(\alp,\sig)$ is a \solu at $V$ and $(\alp h,\sig')$ is a \solu at $V'$.
Moreover, $\alp\sig(W) = \alp h\sig'(W')$
and $$\Abs{\alp,\sig}=\Abs{\alp h,\sig'}.$$ 
\end{lemma}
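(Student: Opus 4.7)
The statement is dual to Lemma~\ref{lem:varsub}: there $\tau$ fixed constants and varied variables, while here $h$ fixes variables and varies constants. My plan is to verify the four claims in sequence, with the central technical step being the commutation identity $\sigma h = h \sigma'$.

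First I would check that $\sigma$ is a well-defined morphism of structured monoids from $M(B,\cX,\theta,\mu)$ to $M(B)$. Involution compatibility follows since $h$ and $\sigma'$ both respect the involution. The constraint is preserved because $h$ being a morphism of structured monoids gives $\mu h = \mu'$, and since $h$ fixes $\cX$ one has $\mu\sigma(X) = \mu h\sigma'(X) = \mu'\sigma'(X) = \mu'(X) = \mu h(X) = \mu(X)$. For compatibility with the type $\theta$: if $\theta(X) = c \in B$, the fourth hypothesis yields $\theta'(X) = c$ and $h(c) \in c^*$, so $\sigma'(X) \in c^*$ forces $\sigma(X) = h\sigma'(X) \in c^*$.

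The central step is to establish $\sigma h = h \sigma'$ as morphisms from $M(B',\cX,\theta',\mu')$ to $M(B,\cX,\theta,\mu)$. On a variable $X \in \cX$ both sides equal $h\sigma'(X)$, using that $h$ fixes $\cX$ and $\sigma(X)$ is defined as $h\sigma'(X)$; on a constant $a' \in B'$, $\sigma h(a') = h(a')$ since $\sigma$ acts as the identity on $B$, while $h\sigma'(a') = h(a')$ since $\sigma'$ fixes $B'$. From this, $\sigma(W) = \sigma h(W') = h\sigma'(W')$, and the palindrome property $\sigma(W) = \sigma(\bar W)$ follows from $\sigma'(W') = \sigma'(\overline{W'})$ together with $\bar W = h(\overline{W'})$.

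To conclude, $(\alpha,\sigma)$ is a solution at $V$ thanks to the non-erasingness hypothesis on $\alpha$. For $(\alpha h, \sigma')$ at $V'$, the composition $\alpha h$ is an $A$-morphism with $\mu_0 \alpha h = \mu h = \mu'$, and when $\cX \neq \es$ non-erasingness follows by combining $h(a') \neq 1$ with $\alpha(a) \neq 1$ for all $a \in B$. Applying $\alpha$ to $\sigma(W) = h\sigma'(W')$ yields $\alpha\sigma(W) = \alpha h\sigma'(W')$, and the weight equality $\Abs{\alpha,\sigma} = \Abs{\alpha h, \sigma'}$ is immediate from $\alpha\sigma(X) = \alpha h\sigma'(X)$ for every $X \in \cX$. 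The main subtlety is the commutation identity $\sigma h = h\sigma'$, which relies on the fact that $h$ sends constants into $M(B)$ so that $\sigma$ acts trivially on them, together with the fourth hypothesis guaranteeing that typed variables remain compatible after applying $h$.
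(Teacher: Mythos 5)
Your proposal is correct and follows essentially the same route as the paper: the key identity $\sigma h = h\sigma'$ verified on variables and constants separately, the chain $\sigma(W)=h\sigma'(W')=h\sigma'(\ov{W'})=\sigma(\ov W)$, the type condition via $h(c)\in c^*$, and the weight equality from $\alpha\sigma(X)=\alpha h\sigma'(X)$. The extra details you supply (the $\mu$-compatibility chain and the nonerasingness of $\alpha h$ when $\cX\neq\es$) are correct refinements of steps the paper passes over quickly.
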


\begin{proof} 
By definition, $\mu h = \mu'$ and $\mu_{0}\alp= \mu$. Hence $(\alp h,\sig')$ is a \solu at $V'$. Now,  
$h(X)= X$ for all $X\in \cX$. 
Hence, $\sig(h(X))=\sig(X)=h\sig'(X)$. For $b'\in B'$ we obtain $\sig h(b')=h(b')=h \sig'(b')$ since $\sig'$ and $\sig$ are the identity on $B'$ and $B$ respectively. It follows that $\sig h=h\sig'$ and hence, $\alp\sig(W) = \alp h\sig'(W')$. 
Next, 
$$
\sig(W) = \sig(h(W')) = h (\sig'(W')) = h (\sig'(\ov{W'}))= \sig (h (\ov{W'})) 
= \sig(\ov{h(W')})= \sig(\ov{W}).
$$ 
Moreover, if $X\in \cX$ and $\theta(X)$ is defined, then $\theta(X) =\theta'(X)= c\in B\cap B'$, and $h(c)\in c^*$ by hypothesis. Hence, 
$\sig'(X)\in c^*$ and therefore $\sig(X)=h\sig'(X)\in c^*$, too.  
Thus, $\sig$ is a $B$-\solu at $V$ and, consequently, $(\alp,\sig)$ a \solu at $V$. 
Finally, since $\sig(X)=h\sig'(X)$ we obtain
$$\Abs{\alp,\sig}= \sum_{X\in \cX}\abs{\alp\sig(X)} = \sum_{X\in \cX}\abs{\alp  h \sig'(X)} = 
\Abs{\alp h,\sig'}.$$
\end{proof}

During the process of finding a solution, the parameters $W,B,\cX,\theta,\mu$
change. We describe the possible changes in terms of a directed graph, which will be converted into an NFA.

\subsection{The NFA $\cF$ and the trimmed NFA $\cA$}\label{subsec:nfacA}
We are ready to define the NFA  $\cA$ mentioned in \prref{thm:alice} in the case 
where $\MMA= A^*$ is a free monoid with \invol. 

\subsubsection{States}\label{subsec:infinite}
We start by building an NFA $\cF$ whose states are all the extended equations $(W,B,\cX,\theta,\mu)$ according to \prref{def:wellf2}. We will later obtain $\cA$ by trimming, that is, by removing all states which are not on accepting paths. Thus, the only difference between $\cF$ and $\cA$ is that $\cA$ doesn't have superfluous states.

\begin{lemma}\label{lem:sizeNFAandV}
An extended equation $V=(W,B,\cX,\theta,\mu)$ can be specified using
at most $\Oh(n\log n)$ bits, so $\cF$ has not more than singly exponentially many states.
\end{lemma}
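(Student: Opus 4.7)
The plan is to bound the number of bits required to describe each of the five components $W$, $B$, $\cX$, $\theta$, $\mu$ separately, then sum them and note that the total is $\Oh(n\log n)$. Since each extended equation is encoded by an $\Oh(n\log n)$-bit string, the total number of extended equations, and hence the number of states of $\cF$, is at most $2^{\Oh(n\log n)}$, which is singly exponential.

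First I would handle $W$: by \prref{def:wellf2}(1) we have $\abs W\leq 204n$, and each letter of $W$ lies in $B\cup\cX\subseteq C\cup\OO$. Since $\abs C=\kappa n$ and $\abs\OO=6n$, each letter can be encoded using $\Oh(\log n)$ bits, so $W$ requires $\Oh(n\log n)$ bits in total. Next, $B\sse C$ and $\cX\sse\OO$ are subsets of sets of size $\Oh(n)$, so each can be specified by an $\Oh(n)$-bit characteristic vector.

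For the type $\theta$, recall from \prref{rem:thetalength} that $\abs\theta\leq\abs{B\cup\cX}\in\Oh(n)$; since $\theta$ is a partial function into $B\setminus A$ (a set of size $\Oh(n)$), listing the at most $\Oh(n)$ pairs $(x,\theta(x))$ takes $\Oh(n\log n)$ bits. Finally, the \morph $\mu:M(B\cup\cX,\theta)\to N_\F$ is determined by its values on the generators $B\cup\cX$. The monoid $N_\F$ has size $\abs{\Apone}^2+2\in\Oh(n^2)$, so each value of $\mu$ fits in $\Oh(\log n)$ bits, and $\mu$ is specified by $\Oh(n\log n)$ bits in total. (Strictly speaking, $\mu$ must be consistent with $\theta$ and with $\mu_0$ on $A$, but that only restricts the set of valid encodings, not the length of the encoding.)

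Summing the contributions gives $\Oh(n\log n)$ bits per extended equation. Hence there are at most $2^{\Oh(n\log n)}$ distinct extended equations, which bounds the number of states of $\cF$. No step here is a genuine obstacle; it is essentially a counting exercise, and the main thing to be careful about is to confirm that all sizes entering the encoding are polynomial in $n$ (in particular $\abs C$, $\abs\OO$, and $\abs{N_\F}$), which is immediate from the conventions summarized in \prref{table:los} and the definition of $N_\F$ in \prref{subsec:mN}.
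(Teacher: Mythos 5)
Your proposal is correct and follows essentially the same route as the paper: encode each of the five components with $\Oh(n)$ symbols from an alphabet of size polynomial in $n$, giving $\Oh(n\log n)$ bits per state and hence at most $2^{\Oh(n\log n)}$ states. If anything, you are slightly more explicit than the paper (characteristic vectors for $B$ and $\cX$, and noting $\abs{N_\F}\in\Oh(n^2)$ rather than treating it as a fixed finite alphabet), but the argument is the same counting exercise.
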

\begin{proof}
We claim that each component of $V$ can be specified using $\Oh(\abs\Gam)=\Oh(n)$ letters from $\Gam$ plus a finite alphabet. Since  $\abs{\Gam}\in\Oh(n)$, we can encode each letter  in $\Gamma$ plus the finite alphabet as a binary number of length at most $\Oh(\log n)$ bits.
 Thus $V$ can be encoded by a binary string of length in $\Oh(n\log n)$.
 It follows that the total number of extended equations is at most $ 2^{\Oh(n \log n)}$.

To establish the claim, notice that  $W\in \Gamma^*$ with $\abs W\leq 204n$, $B \cup \cX \sse \Gam$, $\theta\subset \Gam\times \Gam$ and $\abs \theta \leq \abs{ B \cup \cX}$. Since 
 $\mu: B\cup\cX\ra  N_\F$ and $N_\F$ is finite, $\mu$ can be encoded as a list $\{(c, \mu(c))\mid c\in B \cup \cX\}$, using letters from $\Gam$ plus the finite alphabet $N_\F$. 
  \end{proof}

\paragraph{Initial states.} 
An {\em initial state} is any state of the form $(\Winit,A,\Xinit,\es,\muinit)$,
where 
$$\muinit:(A \cup \Xinit) \to N_\F$$
 is a \morph extending $\mu_0$ such that 
$\muinit(X) \neq 0$ for all $X\in \Xinit$.

If $(\alp,\sig)$ is a \solu of $(\Winit,A,\Xinit,\es,\muinit)$, then necessarily
$\alp = \id{A^*}$ since $\alp$ leaves the letters from $A$ invariant. 
Moreover, we know that $\muinit(X)= \mu_0\sig(X)$. This means that the initial 
value of $\muinit(X)$ tells us whether $\sig(X)=1$; and if $\sig(X)\neq 1$, then 
$\muinit(X) = (a,b)$ and $\sig(X) \in aA^* \cap A^*b$. Hence, $\muinit(X)$ specifies the first and last letters of the reduced word $\sig(X)$ whenever $\sig(X)\neq 1$.
Moreover, $\muinit(X) \neq 0$ implies $\alp \sig(X) \in \F$. Hence, $\alp \sig(X)$ is a reduced word in $\Apone^*$. 

\paragraph{Final states.}
We choose and fix ``distinguished'' letters $c_1 \lds c_m \in C\sm A$ such that $c_i\neq c_j \neq \ov{c_i}$ for all $i\neq j$. 
We say that a state 
$(W,B,\es,\es,\mu)$ is \emph{final} if 
\begin{enumerate}
\item $W= \ov W$,
\item The word $W$ has a prefix of the form $\#c_1\# \cdots \# c_m \#$. 
\end{enumerate}

Every final {state} has the unique $B$-solution $\sig= \id B$ because final states don't have any variables. 

\begin{remark}
The names \emph{initial} and \emph{final} refer to the phase in the construction of the graph at which a state is produced, rather than being start or accept states for the NFA. That is, when we obtain the \edtol language characterization, the start states of the NFA recognising the rational language of endomorphisms correspond to the final states defined here, and the accept states correspond to the initial states. 
\end{remark}

\subsection{Transitions}\label{subsec:traG}
We define two different forms of transitions, based on substitutions and compressions. Both forms are labeled by an \Endo of $C^*$ which induces a \morph between partially commutative monoids $M(B,\es,\theta,\mu)$ and $M(B',\es,\theta',\mu')$.

The direction of each \tra is opposite to that of the morphism labelling the \tra.
Suppose we have a path $p$ from an initial to a final state. A very important (and, perhaps, initially counterintuitive) fact is that in order to produce solutions, our algorithm
follows the path $p$ backwards, that is, from the final to the initial state; we compose the morphisms labeling the transformations in such a directed path $p$ from 
 the last edge to the first one, in order to produce the solutions. This is in agreement with our initial and final states being accept and start states in the NFA, respectively. 

%%%%%%%%%%%%%%%%%%%%%%
\subsubsection{Substitutions.}\label{subsec:subst}
A \emph{\subst \tra} transforms the variables and does not affect the constants.
Let $V= (W,B,\cX,\theta,\mu)$ and  
$V'= (W',B,\cX',\theta',\mu')$ be {states} in $\cF$
sharing the same set of constants $B$; and assume that $V$ is not final and that $V'$ is not an initial state. Moreover, let 
$\theta(b) = \theta'(b)$, 
and $\mu(b) = \mu'(b)$ for all $b\in B$. Therefore $M(B) = M(B,\es,\theta,\mu)= M(B,\es,\theta',\mu')$.

Let $\tau: M(B,\cX,\theta,\mu) \to M(B,\cX',\theta',\mu')$ be any $B$-\morph such that
$\tau(W)= W'$, $\tau$ modifies only $X$ and $\ov X$ for some variable 
$X$, leaves all 
$x\in (B\cup \cX)\sm \os{X,\ov X}$ invariant, and 
$$\tau(X) \in (B\cup \cX')^* \text{ with }\abs{\tau(X)}\leq 3.$$  
Furthermore, we only allow the following choices for $\tau(X)$, $\cX$ and $\cX'$:
\begin{itemize}
\item[(i)] $\tau(X) = 1$ and $\cX' = \cX \setminus  \os{X,\ov X}$.
\item[(ii)] $\tau(X) = uX$ and $\cX' = \cX$ with $u \in B^*$ and $1 \leq \abs u \leq 2$.
\item[(iii)] $\tau(X) = cX'X$ and $\cX = \cX' \setminus  \os{X',\ov{X'}}$ with $c \in B$ and $\theta'(X')=c$.
\end{itemize}

In each of these three cases we define the substitution \tra:
$$V = (W,B,\cX,\theta,\mu) \arc \eps (\tau(W),B,\cX',\theta',\mu')= V'.$$
Here, the label $\eps$ denotes the identity morphism $\id{C^*}$, it restricts to the identity \morph {} from $M(B,\es,\theta',\mu')$ to $M(B,\es,\theta,\mu)$,
and it will be applied in the opposite direction from $\tau$ and the transition.
Note that after having performed a substitution \tra we have 
$\Abs{V'} < \Abs V$ \IFF $\tau$ is defined by $\tau(X) =1$ for some $X$. % 

%%%%%%%%%%%%%%%%%%%%%%
\subsubsection{Compressions}\label{subsec:trimcA}
%%%%%%%%%%%%%%%%%%%%%%
A \emph{compression transition} affects the constants, but does not change the variables.
Let $V= (W,B,\cX,\theta,\mu)$ and  
$V'= (W',B',\cX,\theta',\mu')$ be {states} in $\cF$ 
sharing the same set of variables $\cX$ and assume $V$ is not a final state,
$\theta(X) = \theta'(X)$ 
and $\mu(X) = \mu'(X)$ for all $X\in \cX$.

Let $h: M(B',\cX,\theta',\mu') \to M(B,\cX,\theta,\mu)$ be any $(A\cup \cX)$-\morph such that $W= h(W')$ and  
\begin{enumerate}
\item if $V'$ is non-final, then $1 \leq \abs{h(c)} \leq 2$ 
for all $c \in B'$,  
\item if $V'$ is final, then
$\sum_{c\in B'} \abs{h(c)} \leq \abs W$. 
\end{enumerate}

In case that either $\Abs{V} > \Abs {V'}$ or $V'$ is final and $h\neq \id{B^*}$, we define a compression transition in $\cF$ by
$$V = (h(W'),B,\cX,\theta,\mu) \arc h (W',B',\cX,\theta',\mu')= V',$$
where the \tra label $h$ is given by an \Endo $h \in \End(C^*)$ which induces 
the \morph $h: M(B',\cX,\theta',\mu') \to M(B,\cX,\theta,\mu)$ and which leaves all letters not in $B'$ invariant. 
The direction of the \morph $h$ is again opposite to that of the \tra.

\begin{remark}\label{rem:finspec}
The reason that we have to treat \tra{s} to final states differently is twofold. First, the coexistence of ``singular'' and ``nonsingular'' \solu{s} is possible. In the singular case we have $\sig(X)=1$ for some $X$ and in the nonsingular case we have $\sig(X)\neq 1$ for all $X$. Say there are \solu{s} $\sig$ and $\sig'$ such that $\sig(X_1)=1$ and $\sig'(X_1)=a \in \Apone$. Then for some $h,h' \in L(\cA)$ and some $c_1$ we must have $h(c_1) = 1$ and $h'(c_1) = a$. 
Thus in transformations to a final state we must allow that
$h$ maps some letters to the empty word. In all other situations this is forbidden. 
Thus, if $V\arc h V'$ is a compression \tra and $V'$ is final, then we allow $\Abs V < \Abs {V'}$. 

Second, if a state $V=(W,B,\es,\theta,\mu)$ has no variables, then $W$ has prefix $\#u_1\#\cdots \#u_m\#$ with $u_i\in C^*$. In this case we wish to allow a compression transition $h$ to a final state in one step. By imposing the condition $\sum_{c\in B'} \abs{h(c)} \leq \abs W$ we make sure the specification of $h$  fits into our linear space bound, which is crucial in our complexity analysis below.
\end{remark}

\begin{example}
Let $U=aX$ and $V=aaab$ be an equation, for the purposes of demonstrating how the graph or NFA works. We have 
$$\Winit=\#X\#aX\#aab\#\ov X\ov a\# \ov b\ov a\ov a\#\ov X\#.$$
A path from initial to final states in the graph $\cF$ for this equation is shown in \prref{fig:exampleF}, where for simplicity we label states by a  prefix of  $W$ in each extended equation.

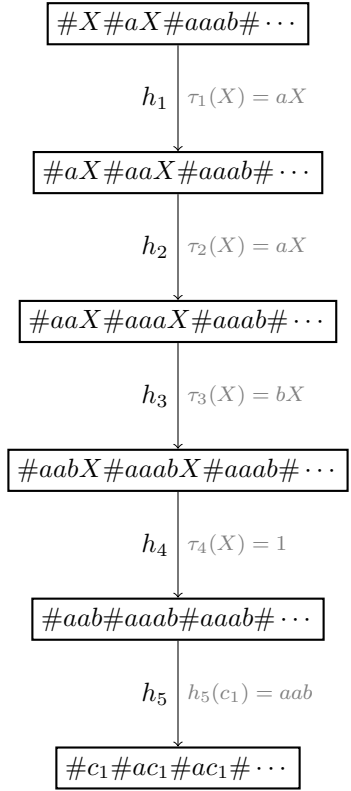
\begin{figure}[htbp]
\begin{center}
  \begin{tikzpicture}[ node distance = 40pt]
    \tikzstyle{ann} = [draw=none,fill=none,right]
  
 \node[draw, thick, rectangle] (a) {$\#X\#aX\#aaab\#\cdots$};
  \node[draw, thick, rectangle] (b) [below=of a] {$\#aX\#aaX\#aaab\#\cdots$};
  \node[draw, thick, draw, thick, rectangle] (c) [ below=of b] {$\#aaX\#aaaX\#aaab\#\cdots$};
  \node[draw, thick, rectangle] (d) [ below=of c] {$\#aabX\#aaabX\#aaab\#\cdots$};  
  \node[draw, thick, rectangle] (e) [ below=of d] {$\#aab\#aaab\#aaab\#\cdots$};  
      \node[draw, thick, rectangle] (f) [ below=of e] {$\#c_1\#ac_1\#ac_1\#\cdots$};
      
    \draw[->](a) --  node[left]  {$h_1$} node [right] {\footnotesize \color{gray} $\tau_1(X)=aX$}  (b);     %
    \draw[->](b) --  node[left] {$h_2$} node [right] {\footnotesize \color{gray} $\tau_2(X)=aX$}(c);
    \draw[->](c) --   node[left] {$h_3$} node [right] {\footnotesize \color{gray} $\tau_3(X)=bX$} (d);     
    \draw[->](d) --   node[left] {$h_4$} node [right] {\footnotesize \color{gray} $\tau_4(X)=1$} (e);
        \draw[->](e) --   node[left] {$h_5$} node [right] {\footnotesize \color{gray} $h_5(c_1)=aab$} (f);
 \end{tikzpicture}
  \caption{A path in $\cF$ from initial to final state for the equation $aX=aaab$. The solution $\sigma(X)$ is obtained by applying the maps $h_1,h_2,h_3, h_4,h_5$ to $c_1$ in reverse order, that is, $\sigma(X)=h_1h_2h_3h_4h_5(c_1)$.
 }\label{fig:exampleF}
\end{center}
\end{figure}

The first four transitions are substitutions $\tau_1(X)=\tau_2(X)=aX,\tau_3(X)=bX,\tau_4(X)=1$ so $h_1,h_2,h_3,h_4$ are just $\id{C^*}$, and the map $h_5(c_1)=aab$ is a compression to a final state.
A solution for $X$ can be obtained by applying the maps to $c_1$ in reverse order to the path labelling,  so we get $\sigma(X)=h_1h_2h_3h_4h_5(c_1)=h_1h_2h_3h_4(aab)=aab$.
\end{example}

\subsection{Proof that the NFA is constructed in quasi-linear space}\label{subsec:trimcA}

We can now give the algorithm to construct the trim NFA $\cA$  in $\NSPACE(n\log n)$. We first give an algorithm to construct $\cF$, then use this to construct $\cA$.

\begin{lemma}\label{lem:checkstate}
Given a tuple $V= (W,B,\cX,\theta,\mu)$, where $W\in \Gam^*$, 
$B \sse C$, $\cX \sse \OO$, $\theta$ is a type, and $\mu:(B\cup \cX) \to N$ is a mapping, we can check within
$\NSPACE(n\log n)$ whether $V$ is an extended equation (that is, $V$ is  a state in $\cF$) and furthermore decide whether the state $V$ is initial or final. 
\end{lemma}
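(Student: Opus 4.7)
The plan is to verify each clause of \prref{def:wellf2}, as well as the additional conditions for being an initial or final state, by iterating through the components of $V$. By (essentially the argument of) \prref{lem:sizeNFAandV}, the whole input tuple has a binary encoding of size $\Oh(n\log n)$, so the nondeterministic machine can freely reread any part of it on its work tape within the stated space bound; the only care needed is to avoid writing down intermediate objects whose size would blow up the $n\log n$ budget.

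Clauses (1), (2), (3), and (5) of \prref{def:wellf2} are pure counting and scanning tasks: traverse $W$ once, maintain the running length, the count of each letter in $\cX$, the number of occurrences of $\#$ (and that $\#$ occurs as the first and last symbol), and accept iff each counter meets the stated bound. Clause (4) is a table lookup on $\mu$. The initial-state check amounts to a symbol-by-symbol comparison of $V$ with the fixed tuple $(\Winit, A, \Xinit, \es, \muinit)$, together with verifying $\muinit(X)\neq 0$ for all $X\in \Xinit$; all these fit easily in quasi-linear space. For the final-state check one verifies $\cX=\es$ and $\theta=\es$ (so that $M(B,\es,\es,\mu)=B^*$ is free), checks the prefix condition $\#c_1\#\cdots\#c_m\#\leq W$ by direct inspection, and tests $W=\ov W$ by a scan, since the monoid is free in this case.

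The main obstacle is clause (6): every $\#$-free factor $x \leq W$ must satisfy $\ov x\leq W$, where ``factor'' refers to the free partially commutative monoid $M(B\cup\cX,\theta)$. Because $\#\in A$ is never involved in any commutation in $\theta$, the positions of the $\#$'s in $W$ are rigid, so every $\#$-free factor of $W$ is contained in one of the maximal $\#$-free ``blocks'' $w_1,\dots,w_r$ that appear between consecutive $\#$'s (where $r = \abs W_\# - 1 \in \Oh(n)$). Hence it suffices to verify, for each $i$, that there is some $j$ with $\ov{w_i} = w_j$ in $M(B\cup\cX,\theta)$: indeed, if this holds then for any sub-factor $x\leq w_i$ we obtain $\ov x \leq \ov{w_i} = w_j \leq W$. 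For each $i$, the machine nondeterministically guesses an index $j$ (using $\Oh(\log n)$ bits), then invokes the nondeterministic linear-space equality test in a free partially commutative monoid discussed in \prref{subsec:fpcm} to check $\ov{w_i} = w_j$ in $M(B\cup\cX,\theta)$. The outer loop over $i$ is made deterministic in $j$ by the Immerman--Szelepcs\'enyi closure of $\NSPACE(n\log n)$ under complementation; equivalently, we reuse the work-tape for each $i$. The total space used is $\Oh(n\log n)$, as required.
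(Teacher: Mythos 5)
Your overall plan (syntactic size check, scanning for clauses (1)--(5), table lookup for $\mu$, direct comparisons for the initial/final conditions, and reducing clause (6) to the maximal $\#$-free blocks) matches the paper's proof in spirit, and the observation that $\#$ is rigid so every $\#$-free factor lives inside one of the blocks $w_1,\dots,w_r$ is exactly the right starting point. However, your treatment of clause (6) has a genuine error: you replace the condition of \prref{def:wellf2}(6) by ``for each $i$ there is some $j$ with $\ov{w_i} = w_j$ in $M(B\cup\cX,\theta)$''. This is sufficient but not necessary, so your procedure decides a strictly smaller set than the set of extended equations. Clause (6), applied to $x=w_i$, only forces $\ov{w_i}$ to be a \emph{factor} of $W$, i.e.\ a factor of some block $w_j$, not equal to a whole block. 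For a concrete counterexample take blocks $w_1=ab$, $w_2=\ov b\,\ov a\,c$, $w_3=\ov c\, a b$ (with $a,b,c$ and their involutions distinct): every $\#$-free factor of $W$ has its involution occurring as a factor of $W$, so clause (6) holds, yet $\ov{w_1}=\ov b\,\ov a$ is a proper factor of $w_2$ and equals no block, so your check rejects a legitimate state of $\cF$. Since the lemma is precisely about deciding membership in the set of states of $\cF$ as defined by \prref{def:wellf2}, your procedure does not prove it.

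The fix is small and is what the paper does: the correct block-wise reformulation of clause (6) is that $\ov{w_i}$ is a \emph{factor} of $W$ in $M(B\cup\cX,\theta)$ for each $i$ (this is equivalent, by the same monotonicity argument you gave), and this is checked by invoking the nondeterministic linear-space algorithm for the uniform factor problem of \prref{subsec:fpcm} on inputs of the form $(\Gam,\theta,\ov{u},W)$, whose size is $\Oh(n\log n)$. Note also that your appeal to Immerman--Szelepcs\'enyi here is unnecessary: you are verifying a conjunction of nondeterministic (existential) tests, which an $\NSPACE$ machine performs sequentially without complementation; closure under complement is needed in the paper only later, for trimming via $\ispath{V}{V'}$.
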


\begin{proof} 
As noted in \prref{lem:sizeNFAandV},  writing down any extended equation requires at most $\Oh(n\log n)$ bits, so if $V$ requires more space we reject it as a valid input. 
If $V$ fits into the allowed space, then go through the conditions listed in \prref{def:wellf2}.
It is obvious how to check the first five conditions.  For example, 
if $\abs W > 204n$, then we reject immediately. 

The most involved test is to see that for every factor $u$ of every $u_i$ with the interpretation $u_i \in M(\Gam,\theta)$ the element $\ov u$ also appears
 in $W\in M(\Gam,\theta)$. For this test we invoke the algorithm that solves the uniform factor problem in free partially commutative monoids as explained in 
 \prref{subsec:fpcm}. Recall that the uniform factor problem refers to an input of the  
 form $(\Gam,\theta,u,w)$. In our case the input has the specific form 
 $(\Gam,\theta,\ov{u},W)$.  We presented a nondeterministic algorithm using linear space in the input size, where the input size of a tuple $(\Gam,\theta,u,w)$ is 
 $(\abs \Gam + \abs \theta + \abs {uw})\log \abs \Gam$,  as we need $\Oh(\log \abs \Gam)$ bits to encode letters. Since $(\abs \Gam + \abs \theta + \abs {uw})\log \abs \Gam \in \Oh(n\log n)$, the call of such a subroutine fits into our space bound. 
 
 Having completed the check that $V$ is a state of $\cF$, it is easy to check whether it is initial ($W=\Winit$, $B=A$, $\theta=\emptyset$) or final ($W=\ov W$, $\theta=\emptyset$, $\cX=\emptyset$); since $\theta=\es$ in both cases we are just checking $W=\Winit, W=\ov W$ in a free monoid.
\end{proof}

In the following, when we say that $V= (W,B,\cX,\theta,\mu)$ is a state in $\cF$, this means $V$ is given as a tuple for which the syntax check according to \prref{lem:checkstate} that $V$ is indeed a state was performed.

\begin{lemma}\label{lem:checktra}
Given states $V= (W,B,\cX,\theta,\mu)$, $V'= (W',B',\cX',\theta',\mu')$ in $\cF$, and a mapping $h:B' \to B^*$, we can check  within
$\NSPACE(n\log n)$ whether the triple $(V,V',h)$ encodes an 
\tra $V\arc h V'$ in the graph $\cF$. 
\end{lemma}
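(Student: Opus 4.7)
The plan is to split on the shape of the label $h$ in order to decide which of the two kinds of transitions is being proposed, and then verify each of the defining conditions of that transition.  Both states are assumed to have already passed the syntax check of \prref{lem:checkstate}, so in particular we may use that $V$ and $V'$ fit in $\Oh(n\log n)$ bits and that we can tell in quasi-linear space whether either of them is initial or final.  We first test whether $h$ is the identity on $B'$: if yes, the proposed transition is a substitution; if no, it is a compression.

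In the substitution case I would verify that $V$ is non-final, $V'$ is non-initial, $B=B'$, and that $\theta$ and $\mu$ agree with $\theta'$ and $\mu'$ on $B$, each check being a sequential scan of the constant data of the two states.  I then nondeterministically guess a variable $X\in\cX$, one of the three permitted rules (i)--(iii), and the corresponding word $\tau(X)$ of length at most $3$; the number of possibilities is polynomial in $n$.  After verifying that $\cX'$ differs from $\cX$ in the prescribed way and that $\tau(X)$ meets the conditions of the chosen rule, the decisive check is $\tau(W)=W'$ in $M(B\cup\cX',\theta')$.  Since $|\tau(W)|\le 3|W|\in\Oh(n)$, this is an instance of the uniform equality problem in a free partially commutative monoid, which by the projection argument of \prref{subsec:fpcm} is in nondeterministic linear space in the input size and hence in $\NSPACE(n\log n)$.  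Finally I would compute the four coordinates of $\Abs V$ and $\Abs{V'}$ directly and check the weight condition from the definition of a substitution transition.

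In the compression case I would verify that $V$ is non-final, $\cX=\cX'$, that $\theta$ and $\mu$ agree with $\theta'$ and $\mu'$ on $\cX$, that the endomorphism extending $h$ is the identity outside $B'$, that $h$ respects the involution, and that $\mu h=\mu'$.  Next I would verify the appropriate size constraint on $h$: either $1\le|h(c)|\le 2$ for every $c\in B'$ if $V'$ is non-final, or $\sum_{c\in B'}|h(c)|\le|W|$ if $V'$ is final; in either case the specification of $h$ occupies $\Oh(n\log n)$ bits.  The central check is $h(W')=W$ in $M(B\cup\cX,\theta)$, together with the constraint that either $\Abs V>\Abs{V'}$ or that $V'$ is final and $h\neq\id{B^*}$; the latter is immediate once the components of the weight are computed.

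The main obstacle is the equality test $h(W')=W$ in the final sub-case, because $h(W')$ can have length up to $|W'|\cdot\max_{c\in B'}|h(c)|\in\Oh(n^2)$ and therefore cannot be written down on the work tape within our budget.  I would handle this exactly as in \prref{subsec:fpcm}: use the projection lemma, comparing $|h(W')|_x$ with $|W|_x$ for each $x\in B\cup\cX$, and for every pair $(a,b)\in(B\cup\cX)^2$ with $ab\neq ba$ in $M(B\cup\cX,\theta)$, stream through $W'$ letter by letter, emit the letters of $h(W')$ projected onto $\{a,b\}$ on the fly, and compare the resulting stream with the projection of $W$ onto $\{a,b\}$.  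This requires only a constant number of counters of size $\Oh(\log n)$ in addition to read-only access to the inputs $V$, $V'$, and $h$, so the whole verification runs in $\NSPACE(n\log n)$.
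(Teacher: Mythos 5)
Your proposal is correct and follows essentially the same route as the paper: branch on whether $h$ is trivial to decide substitution versus compression, guess $\tau$ with $\abs{\tau(X)}\le 3$ in the substitution case, and reduce the decisive tests $\tau(W)=W'$ and $h(W')=W$ to equality in the free partially commutative monoid via the projection machinery of \prref{subsec:fpcm}, all within $\NSPACE(n\log n)$; your streaming treatment of $h(W')$ for transitions into final states is a careful refinement of the paper's ``compute $h(W')$'' (note that for a genuine transition $h(W')=W$ has length at most $204n$, so one may alternatively abort as soon as the computed image overflows that bound). One small correction: the definition of a substitution transition in \prref{subsec:subst} imposes no weight condition (the weight typically increases, e.g.\ for $\tau(X)=uX$), so the final weight check in your substitution case should simply be omitted.
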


\begin{proof} 
We assume $h$ is specified as a tuple requiring at most $\Oh(n\log n)$ bits.
In order to check whether $V\arc h V'$ is
a compression \tra we must have $h\neq\id{B^*}$ and then we go through the conditions of \prref{subsec:trimcA}, most of which are immediate to verify.  Among these, we have to compute  $h(W')$ as a word in 
$(B\cup \cX)^*$ and then see if $W = h(W') \in M(B\cup \cX,\theta)$. The test $W = h(W')\in M(B\cup \cX,\theta)$ is a special case of the uniform factor problem in free partially commutative monoids, as already discussed in the proof of \prref{lem:checkstate}.

For a \subst \tra, a necessary condition is $B= B'$ and $h= \id{B}$, which is trivial to check. Next we 
guess some mapping  $\tau: \cX \to (B\cup \cX')^*$ 
with $\abs{\tau(X)} \leq 3$ for all $X \in \cX$. Just as above we check $\tau(W) = W' \in M(B'\cup \cX',\theta')$ and the other requirements  
for \subst{s} listed in \prref{subsec:subst}.
\end{proof}

As usual in automata theory we modify the NFA $\cF$ by removing all states which are not on a path from some initial to some final state. 
If there is no such path, then $L(\cF)$ is the empty set. The resulting NFA will be denoted as $\cA$. We have $L(\cA) = L(\cF)$. Moreover, $L(\cA) = \es$ \IFF the automaton $\cA$ is empty. 

The key tool used to build the trim NFA $\cA$ is $\ispath{V}{V'}$, which we define to be a Boolean predicate that yields \emph{true} \IFF there is a path 
from state $V$ to $V'$ in the graph $\cA$.

\begin{lemma}\label{lem:checkpath}
Let $V, V'$ represent two states in the graph $\cF$. Then the predicate $\ispath{V}{V'}$ can be evaluated in $\NSPACE(n\log n)$.
\end{lemma}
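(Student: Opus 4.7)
The plan is to reduce the problem to nondeterministic graph reachability in $\cF$, exploiting the fact that by Lemma~\ref{lem:sizeNFAandV} each state of $\cF$ is encodable in $\Oh(n\log n)$ bits and there are at most $2^{\Oh(n\log n)}$ states in total.

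First I would observe that a path from $V$ to $V'$ exists in $\cA$ if and only if (i) there is a directed path from $V$ to $V'$ in $\cF$, (ii) some initial state of $\cF$ reaches $V$ in $\cF$, and (iii) $V'$ reaches some final state in $\cF$. The nontrivial direction is that, given (i)--(iii), concatenating an initial-to-$V$ path, the $V$-to-$V'$ path, and a $V'$-to-final path shows that every intermediate vertex of the middle path already lies on some initial-to-final path in $\cF$, hence survives the trimming and belongs to $\cA$. So the entire $V$-to-$V'$ path is a path in $\cA$.

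The core subroutine is to decide, given two states $P,Q$ of $\cF$, whether there is a directed path from $P$ to $Q$ in $\cF$. I would keep two variables in memory: a ``current state'' $P_i$ (initially $P$) and a step counter bounded by $2^{\Oh(n\log n)}$, both occupying $\Oh(n\log n)$ bits. At each iteration we nondeterministically guess the next state $P_{i+1}$ together with a transition label $h$; we verify using Lemma~\ref{lem:checkstate} that $P_{i+1}$ is a valid state and using Lemma~\ref{lem:checktra} that $P_i \arc h P_{i+1}$ is a valid transition of $\cF$, both within $\NSPACE(n\log n)$. On success we overwrite $P_i$ by $P_{i+1}$ and decrement the counter; we accept as soon as the current state equals $Q$, and reject if the counter reaches zero. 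Since between any two reachable states there is a simple path of length at most the number of states of $\cF$, this correctly decides reachability in $\cF$ and uses total space $\Oh(n\log n)$.

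To evaluate $\ispath{V}{V'}$ we then call this reachability routine three times: once on the pair $(V,V')$, once on $(I,V)$ after guessing and verifying an initial state $I$ via Lemma~\ref{lem:checkstate}, and once on $(V',F)$ after guessing and verifying a final state $F$. All three calls succeed if and only if the conditions (i)--(iii) above hold, and the workspace of each call can be reused. The overall space is dominated by $\Oh(n\log n)$, so the predicate is evaluated in $\NSPACE(n\log n)$. There is no real obstacle here beyond bookkeeping; the key ingredients, namely the linear-in-$n\log n$ bit encodings of states and the $\NSPACE(n\log n)$ verifiers for states and transitions, have already been established in Lemmas~\ref{lem:sizeNFAandV}, \ref{lem:checkstate} and \ref{lem:checktra}.
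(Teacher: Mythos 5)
Your positive direction is fine and close in spirit to the paper: the guess-and-verify walk through $\cF$, using the $\Oh(n\log n)$-bit state encoding of Lemma~\ref{lem:sizeNFAandV} and the verifiers of Lemmas~\ref{lem:checkstate} and~\ref{lem:checktra}, shows that the set $L_\cF$ of pairs $(V,V')$ for which the predicate is \emph{true} is accepted in $\NSPACE(n\log n)$; your reduction of ``path in $\cA$'' to the three $\cF$-reachability conditions (i)--(iii) is also correct and is consistent with how the predicate is actually used (the paper's own proof simply works with reachability in $\cF$).

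The genuine gap is that accepting the positive instances is not the same as \emph{evaluating} the predicate, and your argument never addresses the negative answer. In the paper's notion of $\NSPACE$-computable functions (\prref{subsec:complex}), every halting run must output the correct value and at least one run must halt with that output. On an instance where no path exists, your machine has no way to output \emph{false}: the instruction ``reject if the counter reaches zero'' only kills a branch that guessed badly, and such a branch cannot be allowed to output \emph{false} (on positive instances that would be a wrong output on a halting run), while if failed branches simply abort, then on negative instances no run produces any output at all. This is exactly the point of the paper's proof: it invokes the Immerman--Szelepcs{\'e}nyi theorem, by which $\NSPACE(n\log n)$ is closed under complementation, so that $\ov{L_\cF}$ is also accepted in $\NSPACE(n\log n)$, and the predicate is then evaluated by running the two procedures side by side and outputting whichever verdict is certified. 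The omission is not a formality: Proposition~\ref{prop:contrim} relies on reliable \emph{false} answers of $\ispath{\cdot}{\cdot}$ to decide which states and transitions \emph{not} to output, and to output $\cA=\es$ when appropriate; with only a nondeterministic acceptor for the positive instances, different runs would produce different ``trimmed'' automata, so the trim NFA would not be computed in the required sense.
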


\begin{proof} 
Define the language $L_\cF=\{(V,V')\mid \ispath{V}{V'}=\text{true}\}$. 
On input $(V,V')$ we can guess a path $V=V_0, V_1, h_1, V_2, h_2,\cdots, V'=V_k,h_k$ in $\cF$ from $V$ to $V'$ and check for each $i$ whether $(V_{i-1},V_i, h_i)$ encodes a \tra by using
Lemmas~\ref{lem:checkstate} and \ref{lem:checktra}. Thus, $L_\cF\in\NSPACE(n\log n)$.

Since $\NSPACE(n \log n )$ is closed under complementation by Immerman and Szelepcs{\'e}nyi (see \cite[Theorem 7.6]{pap94}), we also have
 \[\ov{L_\cF}=\{(V,V')\mid \text{$\not\exists$  a path from $V$ to $V'$ in $\cF$}\}\in\NSPACE(n\log n).\]
Thus, the predicate $\ispath{V}{V'}$ can be evaluated in $\NSPACE(n\log n)$ by running two procedures simultaneously to determine if $(V,V')\in L_\cF$ or 
$(V,V')\in\overline{L_\cF}$.
\end{proof}

\begin{proposition}\label{prop:contrim}
We can construct  the trim NFA $\cA$ in $\NSPACE(n \log n)$. 
Within  the same space complexity we can decide  whether $\cA$ is empty, or whether $\cA$ contains a directed cycle. 
\end{proposition}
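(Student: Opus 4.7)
My plan is to build $\cA$ on-the-fly by enumerating all candidate descriptions of states and transitions one by one on the work tape and, for each candidate, testing its membership in $\cA$ using the preceding lemmas. By \prref{lem:sizeNFAandV} an extended equation takes $\Oh(n\log n)$ bits to specify, and the same bound holds for any admissible transition label (a substitution label is the identity, and a compression label has total image size bounded by $\abs W\leq 204n$). So I can iterate through all bit strings of this bounded length using $\Oh(n\log n)$ work-tape space, reusing the tape between iterations.

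For each candidate $V$, I would first call \prref{lem:checkstate} to verify it encodes a state of $\cF$. To then decide whether $V$ survives trimming, I would nondeterministically guess descriptions $V_{\init}$ and $V_{\mathrm{fin}}$, certify via \prref{lem:checkstate} that they are respectively initial and final, and confirm both $\ispath{V_{\init}}{V}$ and $\ispath{V}{V_{\mathrm{fin}}}$ via \prref{lem:checkpath}. If all tests pass the candidate is written to the output tape; otherwise it is discarded. Candidate transitions $(V,V',h)$ are handled analogously: invoke \prref{lem:checktra} followed by the two trimming checks, one for $V$ and one for $V'$. Since every subroutine runs in $\NSPACE(n\log n)$ and they are sequenced rather than nested, the total work-tape usage stays within the allowed bound.

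For the decision problems, I observe first that $\cA$ is empty \IFF no initial state reaches a final state in $\cF$, which I would decide by guessing $V_{\init}, V_{\mathrm{fin}}$, certifying them via \prref{lem:checkstate}, and invoking $\ispath{V_{\init}}{V_{\mathrm{fin}}}$. Second, $\cA$ contains a directed cycle \IFF there exists a state $V$ of $\cA$ together with a transition $V \arc{h} V'$ of $\cF$ satisfying $\ispath{V'}{V}$; any such $V'$ automatically lies in $\cA$, being reachable from an initial state through $V$ and reaching a final state along the returning path through $V$. Accordingly I would guess $V$, certify its trimming as above, guess $V'$ and $h$, invoke \prref{lem:checktra}, and evaluate $\ispath{V'}{V}$. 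All subroutines run in $\NSPACE(n\log n)$; the complementary problems (non-emptiness and absence of cycles) follow by Immerman--Szelepcs{\'e}nyi closure. I do not expect a real obstacle beyond careful bookkeeping to keep subroutine invocations sequential so the space budget does not accumulate: the combinatorial heavy lifting has already been carried out in \prref{lem:checkstate}, \prref{lem:checktra}, and \prref{lem:checkpath}.
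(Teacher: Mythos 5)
Your overall plan — enumerate candidate states and transitions as $\Oh(n\log n)$-bit strings, reuse the work tape, and delegate the combinatorics to \prref{lem:checkstate}, \prref{lem:checktra} and \prref{lem:checkpath} — is the same as the paper's, and your treatment of the two decision problems is sound (your cycle criterion, ``there is $V$ in $\cA$ with a transition $V\arc h V'$ in $\cF$ and $\ispath{V'}{V}$'', is correct and even a little cleaner than the paper's pairwise check, and emptiness/acyclicity via Immerman--Szelepcs{\'e}nyi is fine, since for a decision problem a guess-and-verify acceptor plus closure under complement suffices).

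However, there is a genuine gap in the construction step, i.e.\ in the part where $\cA$ must be \emph{output}. Recall the paper's notion of computing a function in $\NSPACE(f)$: \emph{every} halting run must write the correct value on the output tape. Your rule ``nondeterministically guess $V_{\init}$ and $V_{\mathrm{fin}}$, check $\ispath{V_{\init}}{V}$ and $\ispath{V}{V_{\mathrm{fin}}}$; if the tests pass, output the candidate, otherwise discard it'' violates this: if $V$ does belong to $\cA$ but the run happens to guess an initial state from which $V$ is unreachable (initial states differ in $\muinit$, and likewise there are many final states), the tests fail, the candidate is wrongly discarded, and the run still halts — with an automaton missing a state or transition of $\cA$. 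A failed existential guess is not a certificate of non-membership, so ``otherwise it is discarded'' is exactly the step that breaks. (Conversely, if you instead let a failed guess abort the run, then candidates genuinely outside $\cA$ are never correctly discarded on any halting run.) The paper avoids this by making the discard co-nondeterministically certified: $\ispath{V}{V'}$ is \emph{evaluated} two-sidedly (\prref{lem:checkpath}, via Immerman--Szelepcs{\'e}nyi), and the existential quantification over initial and final states is handled by a deterministic enumeration of all of them (each fits in $\Oh(n\log n)$ bits, and the space is reused), so a candidate is dropped only after every initial/final partner has returned a certified negative answer. Your proposal can be repaired in exactly this way — either enumerate the initial and final candidates deterministically inside the per-candidate check, or apply Immerman--Szelepcs{\'e}nyi once more to the language ``some initial state reaches $V$'' (and its dual) — but as written the trimming decision, and hence the correctness of the constructed $\cA$, does not go through.
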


\begin{proof}
For each $V$ that is a state of $\cF$ output $V$ as an initial node 
of  $\cA$ if both (1) $V$ is initial in $\cF$, and (2) there exists some 
path to a final state in $\cF$. We check (1) using Lemma \ref{lem:checkstate}. For (2) we run through 
 all final states $V'$ of $\cF$ and evaluate 
the predicate $\ispath{V}{V'}$. If at some point $\ispath{V}{V'}$ becomes true, we output $V$ as an initial node in $\cA$. 
If no initial node in $\cA$ is found, then we stop; the output is $\cA = \es$. Hence, we continue only if
there is at least one initial node.

Next, we construct all transitions of $\cA$ as follows. We list all triples $(V,V',h)$ where $V \arc h V'$ is a \tra in $\cF$.
For each such triple we consider all states $V_0$ of $\cA$ which are initial, and
for each $V_0$ we evaluate $\ispath{V_0}{V}$. If no such $V_0$ is found where 
$\ispath{V_0}{V}$ is true, then we 
move to the next triple $(V,V',h)$. If at least one such $V_0$ exists, we list 
all states $V_f$ of $\cF$ which are final. 
For each $V_f$ we evaluate $\ispath{V'}{V_f}$. If no such $V_f$ is found where 
$\ispath{V'}{V_f}$ is true, then we 
move to the next triple $(V,V',h)$. Otherwise we output $(V,V',h)$ as a \tra 
of $\cA$. If, moreover, $V'$ is final in $\cF$, then we mark that \tra in order to indicate that $V'$ is final in $\cA$, too. We then
move to the next triple $(V,V',h)$.

Having these two lists at hand we 
have constructed the trim NFA $\cA$.

Finally, to check for a directed cycle we enumerate all pairs $(V,V') \in \cA \times \cA$ with $V \neq V'$ and for each pair evaluate
$\ispath{V}{V'}$ and $\ispath{V'}{V}$.\end{proof}

With the assertion in \prref{prop:contrim} 
the algorithmic part of the proof of the monoid version of \prref{thm:alice} is finished.  
It remains to show the soundness and completeness 
of the construction. This requires purely existential statements, where no reference to effectiveness is necessary.

\subsection{Soundness}\label{subsec:sound}
In this section we prove \emph{soundness},  that is, any output we obtain by following the transitions in the NFA $\cA$ from an initial to a final state, and then applying the corresponding maps in reverse  order to the distinguished letters, gives a correct solution to the equation $\Winit$.

Recall that we have chosen distinguished letters $c_{1}\lds c_{m}\in C$, and that if $(W,B,\es,\es,\mu)$ is a final state, then 
$W = \ov W$ and $W\in  \#c_{1}\#\cdots \#c_{m}\#B^*$.
\begin{proposition}\label{prop:backandforth}
Let $V_{0}\arc{h_1} 
\cdots \arc{h_{t}} V_t$
be a path in $\cA$ of length $t$, where $V_0= (\Winit, A, \Xinit,\es ,\muinit)$ 
is  an initial and $V_t=(W,B,\es,\es,\mu)$ is a final {state}. 
Then $V_{0}$ has a solution $(\id{A^*},\sig)$ with
 $\sig(\Winit)= h_1 \cdots h_t(W)$. 
Moreover, for $1 \leq i \leq m$ we have
  $$ 
\sig(X_{i}) = h_1 \cdots h_t(c_{i}).
$$ 
\end{proposition}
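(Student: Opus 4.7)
I would proceed by backward induction on $i$ running from $t$ down to $0$, constructing at each step a solution $(\alp_i,\sig_i)$ at $V_i$ whose $A$-morphism component $\alp_i$ is exactly the restriction of the endomorphism composition $h_1 h_2\cdots h_i\in\End(C^*)$ to $B_i$. The two tools for the inductive step are \prref{lem:varsub} (for substitution transitions) and \prref{lem:contr} (for compression transitions), both of which pull a solution at the target of an edge back to a solution at its source.

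\textbf{Base case} ($i=t$). At the final state $V_t=(W,B_t,\es,\es,\mu_t)$ the conditions $\cX_t=\es$ and $W=\ov{W}$ make $\sig_t:=\id{B_t^*}$ a $B_t$-solution. Define $\alp_t$ as the restriction of $h_1\cdots h_t$ to $B_t$. Each $h_j$ fixes $A$ and is a morphism between structured monoids, so iterating the relations $\mu_{j-1}h_j=\mu_j$ yields $\mu_0\alp_t=\mu_t$, and successive applications $h_t,h_{t-1},\dots,h_1$ send $B_t$ into $B_{t-1}^{*},B_{t-2}^{*},\dots,B_0^{*}=A^{*}$. Since $\cX_t=\es$, no nonerasing condition is required, and $(\alp_t,\sig_t)$ is a solution at $V_t$.

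\textbf{Inductive step.} Assume $(\alp_i,\sig_i)$ is a solution at $V_i$. If $V_{i-1}\arc{\eps}V_i$ is a substitution transition with underlying $B$-\morph $\tau$, then $B_{i-1}=B_i$, and \prref{lem:varsub} gives the solution $(\alp_{i-1},\sig_{i-1}):=(\alp_i,\sig_i\tau)$ at $V_{i-1}$ (consistent with $h_i=\id{C^*}$). If $V_{i-1}\arc{h_i}V_i$ is a compression transition, set $\alp_{i-1}:=(h_1\cdots h_{i-1})|_{B_{i-1}}$ and $\sig_{i-1}(X):=h_i\sig_i(X)$, and apply \prref{lem:contr}. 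Its main hypothesis, $\alp_{i-1}(b)\neq 1$ for every $b\in B_{i-1}$, holds because every $h_j$ with $j<t$ is either the identity or a compression into a non-final state (hence $|h_j(c)|\ge 1$ for $c\in B_j$), and $i-1<t$. The condition that $h_i$ itself be nonerasing when $\cX_{i-1}\neq\es$ is automatic: since substitutions are the only transitions that shrink $\cX$ and the path ends at $V_t$ with $\cX_t=\es$, whenever $V_i$ is final we already have $\cX_{i-1}=\es$; otherwise $V_i$ is non-final. The type-commutation condition ``$\theta_{i-1}(X)=c\Rightarrow h_i(c)\in c^*$'' is built into $h_i$ being a morphism of structured monoids together with $\theta_{i-1}(X)=\theta_i(X)$. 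In both cases the ``moreover'' clauses yield $\alp_{i-1}\sig_{i-1}(W_{i-1})=\alp_i\sig_i(W_i)$.

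\textbf{Conclusion and identification of $\sig(X_j)$.} At $i=0$ the initial state satisfies $B_0=A$ and $\alp_0=\id{A^*}$, so $\sig_0$ is a genuine solution of $(\Winit,A,\Xinit,\es,\muinit)$ and iterating the invariance of $\alp_i\sig_i(W_i)$ gives $\sig_0(\Winit)=\alp_t(W_t)=h_1\cdots h_t(W_t)$. Now $\Winit$ begins with $\#X_1\#\cdots\#X_m\#$ and $W_t$ begins with $\#c_1\#\cdots\#c_m\#$; every $h_j$ fixes $\#$; and the constraints $\muinit(X_j)=\mu_0\sig_0(X_j)\neq 0$ and $\mu_t(c_j)=\mu_0(h_1\cdots h_t(c_j))\neq 0$ guarantee that neither $\sig_0(X_j)$ nor $h_1\cdots h_t(c_j)$ contains $\#$. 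Matching the consecutive $\#$-positions on the two sides of $\sig_0(\Winit)=h_1\cdots h_t(W_t)$ then forces $\sig_0(X_j)=h_1\cdots h_t(c_j)$ for each $j$. The main technical obstacle is the verification of the nonerasing hypothesis of \prref{lem:contr} at every compression step, which is handled by the observation above that only the terminal compression into a final state can erase a constant, and by that point $\cX$ has already been emptied.
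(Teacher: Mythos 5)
Your proof is correct and follows essentially the same route as the paper: pull the unique solution $\sig_t=\id{B^*}$ at the final state back along the path by induction, using \prref{lem:varsub} for substitution transitions and \prref{lem:contr} for compression transitions, and then read off $\sig(X_i)=h_1\cdots h_t(c_i)$ by matching the $\#$-positions in the prefixes of $\Winit$ and $W$. Your extra verification of the nonerasing and type hypotheses of \prref{lem:contr} is a more explicit account of details the paper leaves implicit, but the argument is the same.
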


\begin{proof}
Let $s\geq 0$ and  $V_{0}\arc{h_1} 
\cdots \arc{h_{s}} V_s$ be any path to some state 
$V_s=(W_s,B,\cX,\theta,\mu)$ such that $\sig_s$ is a $B$-\solu at $V_s$.
We claim that $V_{0}$ and $V_{s}$ have solutions $(\id{A^*},\sig)$ 
and $(\id{A^*}h_1 \cdots h_s,\sig_s)$, respectively,
with
\begin{align}\label{eq:wonderc}
\sig(\Winit) = h_1 \cdots h_s\sig_s(W_s). 
\end{align}  
Claim (\ref{eq:wonderc}) is trivial for $s=0$ and for 
$s>0$ it follows by induction using \prref{lem:contr}
or \prref{lem:varsub}, depending on whether $h_s$ is a \subst \tra or a compression \tra. Now for $s=t$ we have
 $\ov{W} = {W}$ by the definition of a final state. Since no variables occur in $W$, $\sig_t = \id {B^*}$ is the (unique) $B$-\solu of $W$, so $\sig(\Winit)= h_1 \cdots h_t(W)$. 
 
By definition $\#X_{1}\#\cdots \#X_{m}\#$ is a prefix of $\Winit$ and $\#c_{1}\#\cdots \#c_{m}\#$ is a prefix of $W$ for the final state $V_t$, but ${h} = \id{A^*}h_1 \cdots h_t$ is 
an $A$-\morph{} from $B^*$ to $A^*$ with 
$\abs{{h}(c)}_\# =0$
for all $c \in B$. This implies 
$$\sig(\#X_{1}\#\cdots \#X_{m}\#)= {h}(\#c_{1}\#\cdots \#c_{m}\#).$$
In particular, $\sig(X_{i}) = h_1 \cdots h_t(c_{i})$ for $1 \leq i \leq m$.
\end{proof}
  
Using the notation of \prref{thm:alice} we have shown 
soundness, that is, every output we obtain is a solution in reduced words. 

\begin{corollary}\label{cor:sound}
The following inclusion holds:
\begin{align*}
\{(h(c_1) \lds  h(c_m))&\in C^*\times \cdots \times C^*\mid h\in L(\cA)\}
\sse \\
\bigcup_{\set{\mu}{\mu(X) \neq 0}} \{(\sig(X_1) \lds  \sig(X_m))&
 \in \F^m 
 \mid \sig \in \Sig \wedge \sig(\Winit)=\ov{\sig(\Winit)} \wedge \mu = \mu_0\sig\},
 \end{align*}
 where $\Sig$ denotes the set of $C$-\morph{s} $\sig: \Gam^* \to C^*$.
\end{corollary}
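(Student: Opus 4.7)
The plan is to derive Corollary~\ref{cor:sound} as an essentially immediate consequence of Proposition~\ref{prop:backandforth}. Fix any $h \in L(\cA)$. By definition of the accepted language of the NFA, $h$ arises from some path $V_0 \arc{h_1} \cdots \arc{h_t} V_t$ in $\cA$ with $V_0 = (\Winit, A, \Xinit, \es, \muinit)$ initial and $V_t = (W, B, \es, \es, \mu)$ final, and $h = h_1 \cdots h_t$ as composition of endomorphisms of $C^*$. Apply Proposition~\ref{prop:backandforth} to this path: it produces a \solu $(\id{A^*}, \sig)$ at $V_0$ with $\sig(X_i) = h_1 \cdots h_t(c_i) = h(c_i)$ for $1 \leq i \leq m$.

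Next I would unpack what it means that $(\id{A^*}, \sig)$ is a \solu at $V_0$. By Definition~\ref{def:extequat}, $\sig$ is an $A$-\morph $\Gam^* \to A^*$ satisfying $\sig(\Winit) = \sig(\ov{\Winit}) = \ov{\sig(\Winit)}$, which gives the central equality on the right-hand side of the inclusion. Moreover, as a \solu, $\sig$ respects the constraint: $\muinit = \mu_0 \sig$ on $\Xinit$, which places $\sig$ into the specific union member indexed by $\mu = \muinit$. Here the initial choice $\muinit(X) \neq 0$ for every $X \in \Xinit$ is built into the definition of an initial state.

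It remains to verify that each $\sig(X_i) \in \F$, i.e.\ the solution lies in reduced words. This is Remark~\ref{rem:nova}: since $\mu_0 \sig(X_i) = \muinit(X_i) \neq 0$, the word $\sig(X_i) \in \Apone^*$ contains no factor $a\ov a$ and no occurrence of $\#$, so it is reduced. Thus the tuple $(\sig(X_1), \ldots, \sig(X_m))$ lies in $\F^m$ and satisfies all three conditions on the right-hand side of the inclusion, for the particular choice $\mu = \muinit$. This shows the claimed inclusion.

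No step is really an obstacle here: the whole argument is bookkeeping, because Proposition~\ref{prop:backandforth} already contains the main inductive content (obtained via Lemmas~\ref{lem:varsub} and~\ref{lem:contr}, which propagate \solu{s} backward along substitution and compression transitions respectively). The only care needed is to match up the quantifiers: the union on the right ranges over admissible constraints $\mu$ with $\mu(X) \neq 0$, and we assign to each $h \in L(\cA)$ the constraint $\muinit$ coming from the initial state on its path.
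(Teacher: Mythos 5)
Your proposal is correct and follows the paper's own route: the paper proves Corollary~\ref{cor:sound} simply by citing Proposition~\ref{prop:backandforth}, and your argument is exactly that deduction spelled out (unpacking the path for $h\in L(\cA)$, the solution $(\id{A^*},\sig)$ at the initial state, the constraint $\mu=\muinit$, and reducedness via Remark~\ref{rem:nova}). No gaps; the extra bookkeeping you supply is precisely what the paper leaves implicit.
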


\begin{proof}
Follows from \prref{prop:backandforth}.
\end{proof}
%%%%%%%%%%%%%
\begin{corollary}\label{cor:infsol}
If the NFA $\cA$ is nonempty, then there is some solution $\sig$ which maps all variables 
$X_i$ to reduced words in $\Apone^*$ and which satisfies 
$\sig(\Winit)=\ov{\sig(\Winit)}$.

If the NFA $\cA$ contains a directed cycle, then there are infinitely many such  $\sig$.
\end{corollary}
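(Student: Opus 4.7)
For the first claim, if $\cA$ is nonempty then, by its trim property, there is a path $V_0\arc{h_1}\cdots\arc{h_t}V_t$ from an initial state $V_0$ to a final state $V_t$. I would apply Proposition~\ref{prop:backandforth} directly to produce a solution $(\id{A^*},\sig)$ at $V_0$ with $\sig(\Winit)=h_1\cdots h_t(W_t)$. Since $W_t=\ov{W_t}$ at a final state and each $h_i$ respects the involution, it follows that $\sig(\Winit)=\ov{\sig(\Winit)}$, and the initial constraint $\muinit(X_i)\neq 0$ together with Remark~\ref{rem:nova} forces $\sig(X_i)\in\F$ for each $i$.

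For the second claim, suppose $\cA$ contains a directed cycle through a state $V_p$. Since $\cA$ is trim, $V_p$ lies on some accepting path, so for each $k\geq 0$ we can form a path that goes from an initial state to $V_p$, loops the cycle $k$ times, and then exits to a final state. Each such path yields a solution $\sig^{(k)}$ via Proposition~\ref{prop:backandforth}. My strategy is to show that $\sum_i\abs{\sig^{(k)}(X_i)}\to\infty$ as $k\to\infty$, which forces infinitely many of the $\sig^{(k)}$ to be pairwise distinct. The tool is the solution weight $\Abs{\alp,\sig}$: propagating backward along the path, Lemma~\ref{lem:contr} says compressions preserve $\Abs{\alp,\sig}$, while Lemma~\ref{lem:varsub} says substitutions only weakly increase it, with strict increase by at least one unit whenever a substitution of type (ii) or (iii) is traversed and $\alp(a)\neq 1$ for the constant $a\in B$ that $\tau$ introduces.

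The main obstacle is the following structural observation: every directed cycle of $\cA$ must contain at least one substitution of type (ii) or (iii). To see this I would inspect the weight components $\Abs V=(\omega_1,\ldots,\omega_4)$ and the transition rules. Substitutions of type (i) strictly decrease $\omega_1=\abs W$; non-final compressions strictly decrease $\Abs V$ by definition; only substitutions of types (ii) and (iii) can actually increase $\abs W$. Since $\Abs V$ must return to its original value along any cycle, at least one type (ii) or (iii) substitution must appear. Moreover, because final states have no outgoing transitions, every state on the cycle satisfies $\cX\neq\es$, and the definition of a solution then forces $\alp(a)\neq 1$ for all constants $a\in B$ at that state. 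Thus Lemma~\ref{lem:varsub}(iv) applies at each cycle traversal, contributing a strict gain of at least one unit to $\Abs{\id{A^*},\sig^{(k)}_0}=2\sum_i\abs{\sig^{(k)}(X_i)}$. Consequently this quantity grows without bound in $k$, the $\sig^{(k)}$ are pairwise distinct, and the corollary follows.
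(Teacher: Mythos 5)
Your proposal is correct and takes essentially the same route as the paper: the first part is exactly \prref{prop:backandforth}, and the second part pumps a cycle to get arbitrarily long accepting paths and then uses \prref{lem:varsub} and \prref{lem:contr} to show that the weight $\Abs{\id{A^*},\sig}$ at the initial state is at least the number of type (ii)/(iii) substitutions traversed, which is unbounded since every other transition strictly decreases the bounded state weight $\Abs{V}$. Your only (harmless) deviation is where you locate the forced type (ii)/(iii) substitution --- inside the cycle, because $\Abs{V}$ returns to its value around it, versus the paper's remark that any sufficiently long path must contain many such substitutions --- and note that the nonerasing property $\alp(a)\neq 1$ is only needed at the source of that substitution, where $\cX\neq\es$ holds trivially, so your stronger side claim about every state on the cycle is unnecessary.
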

\begin{proof}
 The first part follows from \prref{prop:backandforth}. 
 
 Now assume that  $\cA$ contains a directed cycle. Then for every $t_0\in \N$ we can choose
 a path $V_{0}\arc{h_1} 
\cdots \arc{h_{t}} V_t$ from an initial state $V_0$ to some final state 
$V_t$ with $t > t_0$. For each 
$0\leq s \leq t$ 
define $\alp_s = \id{A^*}h_1 \cdots h_s$. Thus, $\alp_0 = \id{A^*}$. We  view $\alp_s \in \End(C^*)$, and let $(\alp_s,\sig_s)$ be the corresponding \solu at $V_s$, which exists due to (\ref{eq:wonderc}).

For every \tra $V_{i-1}\arc{h_{i}} V_i$ 
which is defined either by a compression, or by a \subst of type (i), we have $\Abs{V_{i-1}} >\Abs{V_i}$. Since $\Abs V \in \Oh(n^4)$ for all states, 
there is a  constant $\kappa'$ such that every path of length $\kappa' n^4$ must include a \subst of type (ii) or (iii).
Thus, we may assume that for a large enough $t$
there are more than $t_0$ \tra{s} where $V_{i-1}\arc{h_{i}} V_i$ is defined by a \subst of type (ii) or (iii), i.e. with $\tau(X) \in \Gam^*C\Gam^*$. 

By the definition of $\cA$ we have $\alp_s(c) \neq 1$ for all $c \in C$ whenever $s<t$. (The final transition is an exception.) By \prref{lem:varsub} 
and \prref{lem:contr} we have
$$\Abs{\alp_0,\sig_0} \geq t_0.$$
since for each compression transition the weight is unchanged, and for each substitution the weight decreases, and in particular, it decreases strictly at least $t_0$ times. 
The result follows since $\alp_0= \id{A^*}$. Hence, there infinitely many 
\solu{s} $\sig_0$. 
\end{proof}

\subsection{Completeness}\label{subsec:compness}
Now we show that every solution of the equation $\Winit$ can be obtained from $\cA$.

Let us  fix some state 
$V = (W, B, \cX,\es ,\mu)$ and assume that $V$ 
has a \solu  $(\alp,\sig)$. 
We will show that if $V$ is ``small enough'', then  $\cA$ contains a 
path $V\arc{h_1} V_1 
\cdots \arc{h_{t}} V_t$ to some final {state} 
$V_t=(W',B',\es,\es,\mu')$ such that 
$\sig(W) = h_{1}\cdots h_{t}(W')$. 
Let us make precise what ``small'' means.
\begin{definition}\label{def:small}
A state $V = (W, B, \cX,\es,\mu)$  is called \emph{small} if 
$$|W| \leq \bcc.$$ 
\end{definition}
Clearly every initial state
is small. Final states  need not be small. 
%%%%%%%%%%%%%%%%%%%%%%%%%%%%%%%%%%%%%%%%
\subsubsection{Forward property of transitions}\label{subsec:fpotransitions}
The  existence 
of a path $V\arc{h_1} V_1 
\cdots \arc{h_{t}} V_t$ to some final state 
$V_t=(W',B',\es,\es,\mu')$ such that 
$ 
\sig(W) = h_{1}\cdots h_{t}(W')
$ 
relies on the following technical concept. 
\begin{definition}\label{def:fp}
Let $V=(W,B,\cX,\theta,\mu) \arc h (W',B',\cX',\theta',\mu')= V'$ be a transition in $\cA$ and $(\alp,\sig)$ be a \solu at $V$.
We say that the triple
$(V\arc h V',\alp,\sig)$ satisfies the {\em forward property} 
if there exists a \solu $(\alp h,\sig')$ at $V'$ such that 
$$ \alp \sig(W) = \alp h \sig'(W').$$
\end{definition}
By a slight abuse of language: if $V\arc h V'$ is a \tra in $\cA$ and the solution 
$(\alp,\sig)$ at the source $V$ is clear from the context, then we say also  that the 
\tra  $V\arc h V'$ satisfies the forward property. In particular, if we follow a path from $V$ having a \solu $(\alp,\sig)$ to some state 
$V'=(W',B',\es,\theta',\mu')$ by transitions satisfying the forward property, then $V'$ has some \solu. But as $V'$ uses no variables, we obtain $W' = \ov{W'}$. 

\begin{lemma}\label{lem:ftaus}
Let $V= (W,B,\cX,\theta,\mu) \arc \eps (\tau(W),B,\cX',\theta',\mu')=V'$ be 
a \subst \tra (according to \prref{subsec:subst}) and $\theta(Y) = \theta'(Y)$ for all $Y \in \cX \cap \cX'$. In each of the 
following cases $(V\arc \eps V',\alp,\sig)$ satisfies the forward property:
\begin{enumerate}
\item $\sig(X)=1$ and the \tra $V\arc \eps V'$ removes $X$ by
$\tau(X)= 1$;
\item $\theta= \es$, $\sig(X) = av$, $\mu'(X) = \mu(v)$, and the \tra $V\arc \eps V'$ is defined by $\tau(X)= aX$;
\item $\theta(X)=\es$, $\sig(X) = cuv$, $u \in c^*$, $\mu'(X') = \mu(u)$, $\mu'(X) = \mu(v)$, and the \tra $V\arc \eps V'$ is defined by $\tau(X)= cX'X$ with $\theta'(X') = c$;
\item $\theta(X) = c$, $\sig(X)= cu$, $\mu'(X) = \mu(u)$, and the \tra $V\arc \eps V'$ substitutes $X$ by $\tau(X)= cX$.
\end{enumerate}
\end{lemma}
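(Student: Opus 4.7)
My plan is to construct, in each of the four cases, an explicit $B$-\morph $\sig': M(B,\cX',\theta',\mu') \to M(B)$ by ``peeling off'' the prescribed initial segment of $\sig(X)$ and assigning the remainder to the variables that survive (or are introduced) in $V'$. Since the \tra label is $\eps = \id{C^*}$, the forward property reduces to exhibiting $\sig'$ such that $(\alp,\sig')$ is a \solu at $V'$ and $\alp\sig(W) = \alp\sig'(\tau(W))$. Concretely: in (1) set $\sig'(Y) = \sig(Y)$ for all $Y\in \cX' = \cX\sm\os{X,\ov X}$; in (2) set $\sig'(X) = v$ and $\sig'(Y) = \sig(Y)$ for $Y\neq X,\ov X$; in (3) set $\sig'(X') = u$ and $\sig'(X) = v$; in (4) set $\sig'(X) = u$. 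Extend to involutes via $\sig'(\ov Y) = \ov{\sig'(Y)}$ and leave all other variables unchanged.

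The verification splits into three pieces. First, the key identity $\sig(Y) = \sig'(\tau(Y))$ holds for every $Y\in\cX$ by direct inspection, using the hypothesis on $\sig(X)$ (for instance in case (3), $\sig'(\tau(X)) = \sig'(cX'X) = cuv = \sig(X)$). Applying this pointwise gives $\sig(W) = \sig'(\tau(W))$ in the partially commutative monoid $M(B)$, and composing with $\alp$ yields $\alp\sig(W) = \alp\sig'(W')$. Second, to check that $\sig'(W') = \sig'(\ov{W'})$, use that $\tau$ respects the involution and that $\sig(W) = \sig(\ov W)$, giving
\[\sig'(\ov{\tau(W)}) = \sig'(\tau(\ov W)) = \sig(\ov W) = \sig(W) = \sig'(\tau(W)).\]
Third, the structured-monoid constraint $\mu\sig' = \mu'$ is precisely what the case-by-case hypotheses ($\mu'(X) = \mu(v)$ in (2), $\mu'(X') = \mu(u)$ and $\mu'(X) = \mu(v)$ in (3), etc.) guarantee. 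The type constraint $\sig'(Y)\in y^*$ whenever $\theta'(Y) = y$ is vacuous in (1) and (2); in (3) it requires $\sig'(X') = u \in c^*$, which is the explicit hypothesis $u\in c^*$; in (4) it requires $\sig'(X) = u\in c^*$, which follows from $\theta(X) = c$ since this already forces $\sig(X) = cu \in c^*$ at $V$.

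Finally, $(\alp,\sig')$ is a \solu at $V'$ in the sense of \prref{def:extequat}, because $\alp$ and $B$ are untouched by a \subst \tra, so $\alp$ remains an $A$-\morph $M(B)\to M(A,\es,\es,\mu_0)$, and the nonerasing condition on $\alp$ (when $\cX'\neq\es$) is inherited from the corresponding condition at $V$.

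The main obstacle I anticipate is bookkeeping around partial commutativity, most delicately in case (3): the newly typed variable $X'$ must commute with $c$, so the equation $\sig'(cX'X) = cuv$ has to hold in the ambient monoid $M(B)$ rather than merely in the free monoid; this is fine because $u\in c^*$ already makes $cu = uc$ even word-wise, but one must articulate that the type $\theta'(X') = c$ is consistent with the existing $\theta$ (which agrees with $\theta'$ on $B$ and on $\cX\cap\cX'$) so that $M(B,\cX',\theta',\mu')$ is well-defined and the assignment $\sig'$ is a \morph of structured monoids. Once the partial-commutative setup is unpacked, each case is a one-line computation.
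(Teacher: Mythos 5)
Your proposal is correct and follows essentially the same route as the paper: in each case you peel off the prescribed prefix of $\sig(X)$ to define $\sig'$, verify $\sig = \sig'\tau$, and conclude the forward property via $\alp\sig(W)=\alp\sig'(\tau(W))$ (the paper compresses the remaining checks into ``it is clear that $\sig'$ is a $B$-solution'', relying on \prref{lem:varsub}, whereas you spell them out). One tiny imprecision: in case (1) the type constraint on surviving variables is not vacuous but simply inherited, which your assignment $\sig'(Y)=\sig(Y)$ already handles.
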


\begin{proof}Let $V\arc \eps V'$ be defined by $\tau: M(B,\cX,\theta,\mu) \to M(B,\cX',\theta',\mu')$. It is enough to show that $V'$ has a $B$-\solu 
with $\sig= \sig'\tau$.  
\begin{enumerate}
\item Let $\sig'$ be the restriction of $\sig$ to $\cX'= \cX\sm\os{X,\ov X}$. 
Then we have $\sig= \sig'\tau$. 
\item 
Recall that by definition of a \subst \tra{s}, we have $\theta' = \es$, too.
Define $\sig'$ by $\sig'(X) = v$ and $\sig'(Y) = \sig(Y)$ for $Y\neq X,\ov X$. 
Since $\mu'(X)= \mu(v)$, we obtain $\sig'$ as a \morph; and 
we have $\sig= \sig'\tau$.
\item Define $\sig'(X') = u$, $\sig'(X)= v$ and $\sig'(Y) = \sig(Y)$ for $Y\neq X',\ov {X'},X,\ov X$. Then we have $\sig= \sig'\tau$.
\item 
Define $\sig'(X)= u$ and $\sig'(Y) = \sig(Y)$ for $Y\neq X,\ov X$. Since $\theta(X) = c$ and $\sig$ is a \solu, we have $u\in c^*$ and as $\tau$ is a \morph we have $\theta'(X) = c$, too. 
Then we have $\sig= \sig'\tau$.
\end{enumerate}
In all cases it is clear that $\sig'$ is a $B$-\solu.
\end{proof}

\begin{lemma}\label{lem:compitras}
Let $B' \subseteq B$ and $V= (h(W'),B,\cX,\theta,\mu) \arc h (W',B',\cX,\theta',\mu')=V'$ be a compression \tra  (according to \prref{subsec:trimcA}).
If $\sig: \cX \to M(B,\es,\theta,\mu)$ factors through \morph{s} as 
$$\sig: \cX \arc {\sig'} M(B',\es,\theta',\mu') \arc h M(B,\es,\theta,\mu)$$
such that $\sig'(X) \in c^*$ whenever $\theta'(X) = c$,
 then $(\alp h,\sig')$ is a \solu at $V'$ and $(V\arc h V', \alp, \sig)$ satisfies the forward property.
\end{lemma}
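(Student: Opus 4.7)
The plan is to reduce this statement to Lemma~\ref{lem:contr}: once we have shown that $\sig'$ is a $B'$-\solu at $V'$, Lemma~\ref{lem:contr} applied to the $(A\cup\cX)$-\morph $h$ and the $A$-\morph $\alp$ will immediately yield that $(\alp h,\sig')$ is a \solu at $V'$, together with the identity $\alp\sig(W)=\alp h\sig'(W')$, which is the forward property. The remaining hypotheses of Lemma~\ref{lem:contr} are either recorded in the hypothesis that $(\alp,\sig)$ is a \solu at $V$ (namely $\alp(b)\neq 1$ for $b\in B$ and $\mu_0\alp=\mu$), or belong to the definition of a compression \tra in \prref{subsec:trimcA} (namely $h(b')\neq 1$ for $b'\in B'$ when $V'$ is non-final, and the compatibility of types $\theta,\theta'$ together with $h(c)\in c^*$ when $\theta'(X)=c$).

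The key verification is therefore that $\sig'$ is a $B'$-\solu at $V'$. The structural conditions, that $\sig'$ is a $B'$-\morph and that $\sig'(X)\in c^*$ whenever $\theta'(X)=c$, are built into the hypothesis of the factorization. The nontrivial point is the equality $\sig'(W')=\sig'(\ov{W'})$ inside $M(B',\es,\theta',\mu')$. I would prove this by first checking on generators that the \morph{s} $\sig h$ and $h\sig'$ agree on $B'\cup\cX$: for $b'\in B'$ both give $h(b')$ since $\sig$ and $\sig'$ fix their constants, and for $X\in\cX$ both equal $\sig(X)=h\sig'(X)$ by the factorization hypothesis. Consequently $\sig h=h\sig'$ on $M(B'\cup\cX,\theta')$. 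Applying this equality to $W'$ and to $\ov{W'}$ and using $W=h(W')$, $\ov{W}=h(\ov{W'})$ together with the solution identity $\sig(W)=\sig(\ov W)$ in $M(B,\theta)$ gives $h\sig'(W')=h\sig'(\ov{W'})$ in $M(B,\theta)$.

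The main obstacle is promoting this equality in $M(B,\theta)$ back to $M(B',\theta')$. For a general \morph $h$ this is not automatic, but for the compression \tra{s} we work with it will follow from the structural properties imposed in \prref{subsec:trimcA}: because $h$ is $(A\cup\cX)$-invariant with $|h(c)|\geq 1$ for each $c\in B'$, and because the types $\theta'$ are compatible with $h$ in the sense that typed variables are sent to powers of typed letters preserved by $h$, the \morph $h$ is injective on the partially commutative words arising as $\sig'(W'),\sig'(\ov{W'})$. Hence $h\sig'(W')=h\sig'(\ov{W'})$ forces $\sig'(W')=\sig'(\ov{W'})$, and so $\sig'$ is indeed a $B'$-\solu at $V'$. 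With this in hand, the invocation of Lemma~\ref{lem:contr} completes the proof and provides the forward property $\alp\sig(W)=\alp h\sig'(W')$ as part of its conclusion.
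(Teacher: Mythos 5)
Your reduction to \prref{lem:contr} and the computation on generators showing $\sig h=h\sig'$, hence $\alp\sig(W)=\alp h\sig'(W')$, is exactly the content of the paper's own (one-line) proof. The problem is the step you yourself flag as the main obstacle: to get $\sig'(W')=\sig'(\ov{W'})$ you claim that $h$ is injective on the elements of $M(B',\es,\theta',\mu')$ that occur, so that $h\sig'(W')=h\sig'(\ov{W'})$ can be pulled back. Nothing in \prref{subsec:trimcA} gives such injectivity, and in the situations where the lemma is actually applied it fails: in pair compression one has $B'=B\cup\os{c,\ov c}$ and $h(c)=ab$ with $a,b\in B\sse B'$, so $h(c)=h(ab)$ and $h$ is far from injective on $M(B',\es,\theta',\mu')$; for a transition into a final state $h$ may even erase letters (\prref{rem:finspec}). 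Moreover, being self-involutive does not pull back along such an $h$: for $u'=c\,\ov b\,\ov a$ one has $h(u')=ab\ov b\ov a=\ov{h(u')}$ although $u'\neq\ov{u'}$. So your argument for the equality $\sig'(W')=\sig'(\ov{W'})$, which is precisely what is needed for $\sig'$ to be a $B'$-\solu, does not go through.

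For comparison, the paper does not attempt to recover this equality from $h$ at all: its proof records only $\sig h=h\sig'$ and the resulting identity $\alp\sig(W)=\alp h\sig'(W')$, and the fact that $\sig'$ is a $B'$-\solu is in effect supplied with the factorization. In every use of the lemma in \prref{subsec:compness} the map $\sig'$ is constructed explicitly by performing the compression symmetrically on the self-involutive word $\sig(W)$ and on the words $\sig(X)$, so $\sig'(W')=\sig'(\ov{W'})$ holds by construction and never has to be deduced from its image under $h$. If you want your write-up to be self-contained, either include ``$\sig'$ is a $B'$-\solu at $V'$'' among the hypotheses (and verify it at each application, as the paper implicitly does), or argue it from the explicit construction of $\sig'$; an appeal to injectivity of $h$ is not an available route. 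A small further remark: the hypothesis $h(c)\in c^*$ for $\theta'(X)=c$ that you need for \prref{lem:contr} is not literally listed in \prref{subsec:trimcA}, but it does follow because $h$ is a \morph of the partially commutative structured monoids fixing $X$ and $\theta(X)=\theta'(X)$, which forces every letter of $h(c)$ to commute with $X$ and hence to equal $c$.
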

\begin{proof}
 We have $\sig h = h \sig'$ and hence, $ \alp \sig(W) = \alp h \sig'(W')$. 
\end{proof}

Frequently, we cannot apply \prref{lem:compitras} because $\sig$ cannot be written as $h\sig'$. The typical example is that  
$B'\varsubsetneq  B$, but some $\sig(X)$ uses a letter from $B\sm B'$, and $h(a) = a$ for all $a \in B'$.
 This type of 
``alphabet reduction'', switching from a larger alphabet $B$ to some proper subset $B'$, is needed only if the type relations $\theta, \theta'$ are empty. 
Therefore the following lemma applies in this situation. 
\begin{lemma}\label{lem:alpred}
Let $B'\varsubsetneq  B$ and $V= (W,B,\cX,\es,\mu) \arc \eps (W',B',\cX,\es,\mu')=V'$ be a compression \tra which is induced by  the identity $\id{C^*}$.
Thus, $\eps$ becomes the canonical inclusion of $M(B',\es,\es,\mu')$ into $M(B,\es,\es,\mu)$. 
In particular, $W= W'$ and $\mu'$ is the restriction of $\mu$. 

Let $(\alp,\sig)$ be a \solu at $V$.
Define a $B'$-\morph $\bet: M(B,\es,\es,\mu) \to M(B',\es,\es,\mu')$ 
by $\bet(b)= \alp(b)$ for $b \in B\sm B'$ and $\bet(b)= b$ for $b \in B'$.
Let 
$\sig'(X) = \bet \sig (X)$.
 Then $(\alp \eps,\sig')$ is a \solu at $V'$ with $ \alp \sig(W) = \alp \eps \sig'(W')$. In particular, $(V\arc \eps V', \alp, \sig)$ satisfies the forward property. 
\end{lemma}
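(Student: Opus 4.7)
The plan is to verify, in sequence, four things: that $\bet$ is a well-defined morphism of structured monoids, that $\sig'$ extends to a $B'$-solution at $V'$, that $(\alp\eps,\sig')$ is a solution at $V'$, and that $\alp\sig(W)=\alp\eps\sig'(W')$. The whole argument pivots on an auxiliary identity $\sig'(W')=\bet\sig(W)$, plus the structural observation that $\alp\bet=\alp$ as $A$-morphisms out of $M(B)$.

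First I would confirm that $\bet:M(B,\es,\es,\mu)\to M(B',\es,\es,\mu')$ is well-defined. Since $A\sse B'$ always, we have $\alp(b)\in A^*\sse (B')^*$ for $b\in B\sm B'$, so $\bet$ maps into $(B')^*$. To check $\mu'\bet=\mu$: for $b\in B'$ trivially $\mu'\bet(b)=\mu'(b)=\mu(b)$ because $\mu'$ is the restriction; for $b\in B\sm B'$ we use that $(\alp,\sig)$ being a solution at $V$ gives $\mu_0\alp=\mu$, so $\mu'\bet(b)=\mu'(\alp(b))=\mu_0(\alp(b))=\mu(b)$ (note $\mu'$ and $\mu_0$ agree on $A^*$). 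Involution compatibility follows from $B'=\ov{B'}$ and the fact that $\alp$ is an involution-respecting morphism.

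Next I would establish that $\sig'$ is a $B'$-solution at $V'$. A prerequisite to even defining the \tra $V\arc\eps V'$ via the canonical inclusion is that $W=W'$ already lies in $(B'\cup\cX)^*$; only then does $\eps(W')=W$ make sense. Once this is granted, the identity $\sig'(W')=\bet\sig(W)$ follows letter-by-letter: on any $b\in B'$ occurring in $W'$ both sides produce $b$, and on any $X\in\cX$ both sides produce $\bet\sig(X)$. Combining this with $\sig(W)=\sig(\ov W)$ and the fact that $\bet$ respects involution gives $\sig'(W')=\bet\sig(W)=\bet\sig(\ov W)=\ov{\bet\sig(W)}=\ov{\sig'(W')}=\sig'(\ov{W'})$. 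Any type constraint is vacuous because $\theta=\theta'=\es$.

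It remains to check that $(\alp\eps,\sig')$ is a solution at $V'$ and that $\alp\sig(W)=\alp\eps\sig'(W')$. Since $\eps$ is the inclusion, $\alp\eps=\alp|_{(B')^*}$ is an $A$-morphism with $\mu_0\alp\eps=\mu|_{B'}=\mu'$, and it is nonerasing on $B'$ if $\cX\neq\es$ because $\alp$ is nonerasing on $B\supseteq B'$. For the final equality, use the key identity and compute $\alp\eps\sig'(W')=\alp\bet\sig(W)$, so it suffices to see that $\alp\bet=\alp$ on $B$: for $b\in B'$ we have $\bet(b)=b$; for $b\in B\sm B'$ we have $\bet(b)=\alp(b)\in A^*$ and then $\alp\bet(b)=\alp(\alp(b))=\alp(b)$ because $\alp$ fixes $A$ pointwise. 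The main conceptual (rather than technical) obstacle is recognising that $\bet$ is precisely designed so that it ``absorbs'' the letters being discarded from the alphabet via $\alp$, while leaving $B'$ alone; this is exactly what forces both the weight identity and the image of $W$ to be preserved. Everything else is routine bookkeeping about composition, involution, and the $N_\F$-constraint.
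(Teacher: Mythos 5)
Your proof is correct and follows essentially the same route as the paper's: verifying that $\bet$ is a structured $B'$-\morph via $\mu\bet=\mu$, using the identity $\sig'(W')=\bet\sig(W)$ to transfer $\sig(W)=\sig(\ov W)$ to $V'$, and concluding with $\alp\bet=\alp$ (since $\alp$ is an $A$-\morph) to get $\alp\sig(W)=\alp\eps\sig'(W')$. The extra bookkeeping you include (well-definedness of $\bet$, the nonerasing condition, constraint compatibility) is just a more explicit rendering of steps the paper leaves implicit.
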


\begin{proof}
Since $\alp:  M(B,\es,\es,\mu) \to M(A,\es,\es,\mu_{0})$ 
is an $A$-\morph with $\mu(a) = \mu_0(a)$ for all $a \in A$, we 
have $\mu\bet(b)= \mu \alp(b) = \mu_{0}\alp(b) = \mu(b)$ for all $b\in B\sm B'$ and $\bet$ is indeed a $B'$-\morph{} from $M(B,\es,\es,\mu)$  to $M(B',\es,\es,\mu')$. 

Note that $M(B',\cX,\es,\mu')$ is a submonoid of $M(B,\cX,\es,\mu)$
and $\eps$ realizes the inclusion of these free monoids. Hence
$W= \eps(W') =W'$ as words. In particular, $\sig(W) = \sig ({\ov W})$ implies $\sig'(W') = \sig' ({\ov W'})$. Thus, $(\alp \eps, \sig')$ solves 
$V'$. 

Finally, by definition of $\bet$ we have $\alp = \alp \bet$ because $\alp$ is an $A$-\morph.  Hence $\alp = \alp \eps \bet$ and we obtain $$\alp \eps \sig'(W')= \alp \eps \sig'(W)= \alp \eps \bet \sig(W)
= \alp  \sig(W).$$
\end{proof}

\begin{definition}\label{def:visible}
Let $\sig: \Gam \to C^*$ be any $C$-\morph and  $W \in\Gam^*$.
The word $W$ is realized as a sequence of {\em positions}, say $1,2\lds \abs{W}$, and each position is labeled by a letter from $\Gam$. 
If $W= u_{0}x_{1}u_{1} \cdots x_{m}u_{m}$, with $u_{i}\in C^*$ and 
$x_{i}\in \OO$, then we have $\sig(W)= u_{0}\sig(x_{1})u_{1} \cdots \sig(x_{m})u_{m}$. The positions in $\sig(W)$ corresponding to the positions of the $u_{i}$'s are henceforth called {\em visible}. 
\end{definition}
Given $w=\sig(W)$, each visible position in $w$ can be uniquely identified with a position in $W$, both positions having the same label in $C$. 
Following a path satisfying the forward property makes the 
length of the equation oscillate.
In particular, thoughout the {\em compression method} below the algorithm progresses from small state to small state, but in between the states are not necessarily small.

Proposition \ref{prop:completeness} shows that every solution can be found by tracing a path in $\cA$. 

\begin{proposition}\label{prop:completeness}
Let $V = (W, B, \cX,\es ,\mu)$ be small and let 
$(\alp,\sig)$ be a solution at $V$. 
Then  $\cA$ contains a 
path $V\arc{h_1} V_1 
\cdots \arc{h_{t}} V_t$ to some  final {state} 
$V_t$ 
of transitions satisfying the forward property.

In particular, if $V$ is an initial state, then we have 
$\sig(X_i) = h_{1}\cdots h_{t}(c_i)$ for all 
$1\leq i \leq m$, where $c_1 \lds c_m$ are the distinguished letters.
\end{proposition}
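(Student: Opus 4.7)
}

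The natural approach is strong induction on the weight $\Abs{\alp, \sig, V} \in \N^5$ under the lexicographic order, which is well-founded. The goal is to show that from any non-final small state $V$ with solution $(\alp, \sig)$ one can always find a short sequence of transitions in $\cA$ that satisfy the forward property, end at a small state with solution of strictly smaller weight, and thereby eventually reach a final state. The last assertion of the proposition, concerning initial states, then follows immediately: since the initial state has $\alp = \id{A^*}$ and the first $m$ non-$\#$ positions of $\Winit$ are the variables $X_i$, iterating the forward property along the constructed path and reading off the images of the distinguished seed letters $c_i$ recovers $\sig(X_i) = h_1 \cdots h_t(c_i)$, exactly as in the proof of \prref{prop:backandforth}.

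The easy base case is $\cX = \es$: then $\sig = \id{B^*}$, so $\sig(W) = \sig(\ov W)$ forces $W = \ov W$ in $M(B)$. A single compression transition to a final state can be built by introducing the distinguished letters $c_1, \ldots, c_m$, choosing a palindromic seed $W'$ of the form $\#c_1\#\cdots\#c_m\# \cdot w' \cdot \#\ov{c_m}\#\cdots\#\ov{c_1}\#$ that matches the inherited structure of $W$, and defining $h$ so that $h(W') = W$; smallness of $V$ ensures the budget $\sum_{c\in B'}\abs{h(c)} \leq \abs W$ required by \prref{subsec:trimcA}. For the inductive step, if some $\sig(X) = 1$ then \prref{lem:ftaus}(1) lets us apply a substitution $\tau(X) = 1$ that satisfies the forward property and strictly shrinks $\abs W$, so the induction hypothesis applies to the resulting small state. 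Otherwise every $\sig(X)$ is non-empty, and we carry out one round of Je\.z's recompression: a \emph{pop phase} using substitutions of types (ii) and (iii), justified by \prref{lem:ftaus}(2)--(4), which extracts the first letter or first maximal block of each $\sig(X)$ and places it as an explicit constant next to $X$; an \emph{alphabet-adjustment phase} using compression transitions with $h = \id{C^*}$ or via \prref{lem:alpred} to add typed constants for long blocks or discard now-unused letters; and a \emph{compression phase} in which pair compressions $ab \mapsto d$ (for $a\neq \ov b$) and block compressions $a^k \mapsto d$, applied to factors that are \emph{visible} in $\alp\sig(W)$, are performed via morphisms $h$ to which \prref{lem:compitras} applies, so the forward property is preserved throughout. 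By Je\.z's counting argument, a constant fraction of adjacent visible pairs and blocks can be compressed in one round, so the round ends at a state whose weight has strictly decreased in one of the lexicographic components $(\oo_1, \oo_2, \oo_3, \oo_4)$.

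The main obstacle will be showing that the intermediate equations produced during the pop phase never exceed the global extended-equation bound $\abs W \leq 204n$, even though $V$ is only guaranteed to satisfy the smaller bound $\abs W \leq \bcc = 96n + 6\abs{\Winit}$. This hinges on bounding the number of letters popped in one round by a constant times $\sum_{X\in\cX}\abs W_X \leq 12n$ (condition (2) of \prref{def:wellf2}), which keeps the peak length at most $\pcc = 104n + 6\abs{\Winit} \leq 204n$; and on choosing $\fcc$, $\bcc$, $\pcc$ so that one full pop-then-compress round maps the small-state regime $[0,\bcc]$ back into itself through the peak $\pcc$. A secondary subtlety is arranging that each round strictly decreases the weight: pair compression primarily decreases $\oo_1$, block compression decreases $\oo_3$ by creating a new typing, and the alphabet-reduction step from \prref{lem:alpred} decreases $\oo_4$; combining these carefully so that at least one round's end state is both small and of strictly smaller weight than $V$ is the most delicate bookkeeping, but is precisely the content of the recompression technique.
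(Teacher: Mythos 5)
Your overall architecture coincides with the paper's: induction on the lexicographic weight $\Abs{\alp,\sig,V}$, the base case $\cX=\es$ handled by one compression transition onto a final state, a preprocessing/pop phase justified by \prref{lem:ftaus}, alphabet reduction via \prref{lem:alpred}, and one round of block-plus-pair recompression with every transition checked against the forward property (\prref{lem:compitras}). However, there is a genuine gap: the two quantitative statements that carry the whole proof are only named as ``the main obstacle'' and asserted to be ``precisely the content of the recompression technique'', not proved. First, your length bookkeeping is inaccurate: after the pops the equation has length at most $\pcc$, but the block-compression phase then splits variables (adding typed fresh variables and constants) and, inside its loop, repeatedly pops $c$ and $c^2$ from typed variables before each halving step, so intermediate equations grow to $168n+6\abs\Winit$, strictly beyond $\pcc$; showing that this stays under the $204n$ ceiling of \prref{def:wellf2} (so that the intermediate tuples are states of $\cF$ at all) requires the explicit halving analysis of the inner loop, which is Lemma~\ref{block_comp_space} in the paper and is absent from your plan.

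Second, the claim that ``by Je\.z's counting argument a constant fraction of adjacent visible pairs and blocks can be compressed, so the round ends at a small state'' is exactly the statement that needs proof, and it does not follow without extra ideas: pairs of the form $a\ov a$ and pairs meeting a variable cannot be compressed, so one must bound the number of such bad positions. The paper does this with the marking process along a path from an initial state (at most $2\abs\Winit$ marked letters), then factors $W$ into $\ell>32n$ unmarked triples $u_i$, and uses the maximal choice of the partition $(L,R)$ of \prref{rem:choiceLR} (derandomizing the expectation $\E{\abs{v_i}}=\tfrac52$) to compress at least $\tfrac16\ell$ of them, giving $\abs{W''}\leq \abs W+8n-\tfrac{\ell}{2}\leq\bcc$; this is Lemma~\ref{pair_comp_small}, and without it your induction does not close, since you cannot guarantee the round returns to the small regime. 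Two smaller corrections: the strict decrease needed for the outer induction comes from the preprocessing pops (the component $\Abs{\alp,\sig}$, via \prref{lem:varsub}(iv)); the compression round itself, as in \prref{lem:complett}, only needs to be non-increasing. And in the base case the budget $\sum_{c\in B'}\abs{h(c)}\leq\abs W$ holds because the $u_i$ are disjoint factors of $W$, independently of $V$ being small.
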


\subsubsection{Reduction of Proposition~\ref{prop:completeness} to Lemma~\ref{lem:complett}}\label{subsec:popco}
 As a base case we let $\cX= \es$: thus, 
$V=(W,B,\es,\es,\mu)$. If $V$ is final, then there is nothing to do. Otherwise, by definition of an \exe, we have
$W \in \#B^*\#$ and $\abs{W}_\# = \abs{\Winit}_\#$. 
Since $\cX = \es$, we have  $(\alp,\sig)= (\alp,\id{B^*})$ and we can write 
$$W = \#u_1\# \cdots \#u_m\# u_{m+1}\#u_{m+2}\#\ov{u_{m+2}}\# \ov{u_{m+1}} \#\ov{u_m}\# \cdots \#\ov{u_1} \#.$$

Define $B_1= A \cup \os{c_1, \ov{c_1} \lds c_{m+2}, \ov{c_{m+2}} }$ 
as a disjoint union where $c_1 \lds c_{m}$ are the distinguished letters. 
Define 
$V_1= (W_1, B_1, \es,\es ,\mu_1)$ with 
$$W_1 = \#c_1\# \cdots \#c_m\# c_{m+1}\#c_{m+2}\#\ov{c_{m+2}}\# \ov{c_{m+1}} \#\ov{c_m}\# \cdots \#\ov{c_1} \#.$$ 
Defining $\mu_1(c_i) =\mu(u_i)$ and 
$h_1(c_i) = u_i$ yields the desired result. 
Clearly, $(\alp h_1,\id{B_1^*})$ is a \solu at the final state 
$V_1$ and the compression \tra $V\arc{h} V_1$ satisfies the forward property. (Note that we could have some $u_i=1$,  so this is where the case distinction discussed in \prref{rem:finspec} is needed.)

The proof of \prref{prop:completeness} 
is by induction on the weight $\Abs{\alp,\sig,V}$. 
It covers the rest of this section.
Throughout the proof, all transitions satisfy the forward property by \prref{lem:ftaus}, \prref{lem:compitras}, and \prref{lem:alpred}; therefore, if we 
know that
$V_i=(W_i,B_i,\cX_i,\theta_i,\mu_i)$ has a $B_i$-solution $\sig_i$ for all $1 \leq i\leq s$, where $s$ is some positive integer, then
we obtain $\sig(W) = h_{1}\cdots h_{s}\sig_s(W_s)$ by \prref{def:fp}.\\

\noindent{\bf Preprocessing.} \label{page:preprocessing}
By the base case we may henceforth assume that  $\cX\neq \es$. If we have $\sig(X) =1$ for some variable, then we follow a \subst \tra removing the variable; and we are done by induction on the weight. 

Thus, without restriction, we can assume $\sig(X) \neq 1$
for all variables. For each $X\in \cX$, if $\sig(X) \in a B^*$ we follow a \subst \tra  defined by $\tau(X) = aX$.  This has the effect of {\em popping out} constants at the start and end of each variable, since each $X$ comes with its involution $\ov X$. Since $W$ has at most $4n$ variables present,  the length of $W$ increases by at most $8n$ 
 and the weight $\Abs{\alp\sig}$ decreases.
 In case that this \subst leads to a situation where a solution maps $X$ to the empty word, we remove $X$ and $\ov X$.
After that we are done by induction on the weight (since $\Abs{\alp\sig}$ is the dominant term in the lexicographic ordering), \emph{unless} we end with $\abs{\tau(W)} > \bcc$, that is, the new state is not small. 
In that case we will have  $\bcc < \abs{\tau(W)} \leq \pcc$. 
Thus, in proving a more general statement, we will not assume that $V$ is small, but that 
$$\bcc < \abs W \leq \pcc.$$

So far, we did not discuss the size of $B$. 
Assume that we are in the situation of \prref{lem:alpred}: there is $B'$ with $A\sse B'\varsubsetneq  B$ 
such that $W\in (B'\cup\cX)^*$, then we can use 
\prref{lem:alpred}; and we are done by induction on the weight. Thus, after preprocessing  we may assume that all letters in $B\sm A$ appear in $W$, that is, $\abs{W}_b\geq 1$ for all $b\in B\sm A$.

During the preprocessing we decreased the weight, but at the end of this phase $V$ may no longer be  small. Therefore, the proof of \prref{prop:completeness} reduces to showing the following lemma.

\begin{lemma}\label{lem:complett}
Let $V = (W, B, \cX,\es,\mu)$ be a state with a \solu  $(\alp,\sig)$ such that 
$\cX \neq \es$ and $\abs {W} \leq \pcc$.
Then $\cA$ contains a  path 
of transitions satisfying the forward property to 
some small state $V'=  (W', B', \cX',\es,\mu')$ with a \solu  $(\alp',\sig')$
such that $\Abs{\alp,\sig,V} \geq \Abs{\alp',\sig',V'}$. \end{lemma}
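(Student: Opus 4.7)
The plan is to apply one combined round of Jeż's recompression---pair compression followed by block compression, with alphabet reduction at the end---producing a path in $\cA$ from $V$ to a small state $V'$, all of whose transitions satisfy the forward property via \prref{lem:ftaus}, \prref{lem:compitras}, and \prref{lem:alpred}.

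For \emph{pair compression} I choose a partition $B \sm A = B_\ell \sqcup B_r$ respecting the involution. For every variable $X$ such that $\sig(X)$ starts with a letter $b \in B_r$, I apply a type-(ii) \subst $\tau(X) = bX$, and symmetrically for variables whose $\sig(X)$ ends in $B_\ell$. This exposes, as adjacent pairs in $W$, all positions of $\sig(W)$ where a letter of $B_\ell$ is immediately followed by one of $B_r$; a single compression transition $h$ then renames each such visible pair $ab$ by a fresh constant $c_{ab} \in C \sm A$. A standard averaging argument over partitions shows that for some choice of $(B_\ell, B_r)$ a constant fraction of the adjacent distinct-letter positions of $\sig(W)$ is compressed. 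For \emph{block compression}, for each $a \in B$ I pop maximal $a^k$-prefixes and suffixes out of each variable via type-(ii) substitutions, then apply type-(iii) substitutions $\tau(X) = aX'X$ with $\theta'(X') = a$ to absorb interior $a$-runs of $\sig(X)$ into fresh typed variables, and finally apply one compression transition renaming each distinct maximal block $a^k$ appearing in $W$ by a fresh letter of $C \sm A$. Any letter of $B$ that becomes unused is removed by an alphabet-reduction compression via \prref{lem:alpred}, lowering $\oo_4 = \abs B$ without changing $\abs W$.

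The quantity $\Abs{\alp,\sig}$ is monotone non-increasing along the path by \prref{lem:varsub} and \prref{lem:contr}, and whenever it is preserved a strictly later component of $\Abs{V}$ decreases, so at the endpoint we have $\Abs{\alp,\sig,V} \geq \Abs{\alp',\sig',V'}$. The main obstacle is the length bookkeeping. Popping inflates $\abs{W}$ by at most $8n$ (two constants per variable per side, and $\sum_{X} \abs{W}_X \leq 4n$), which keeps the intermediate length well below the hard cap $204n$ required for extended equations; similarly the number of typed variables introduced in block compression stays within the $12n$ bound of \prref{def:wellf2}(2). The key combinatorial input, borrowed from \cite{Jez16jacm}, is that after popping a constant fraction of the positions of $W$ lies either in a compressible pair or in a block of length $\geq 2$, so the combined round reduces $\abs{W}$ by at least a constant fraction of $\abs{W}$; since $\abs{W} \geq \bcc$ entering the round, this comfortably exceeds the gap $\abs{W} - \bcc \leq \pcc - \bcc = 8n$, depositing us in the small regime with the required weight inequality.
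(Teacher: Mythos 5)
There is a genuine gap, and it sits exactly where the paper has to work hardest: the treatment of long blocks. Your block-compression step ends with ``one compression transition renaming each distinct maximal block $a^k$ appearing in $W$ by a fresh letter of $C\sm A$''. Such a transition does not exist in $\cF$: a compression \tra into a non-final state must satisfy $1\leq \abs{h(c)}\leq 2$ for every $c\in B'$, so a block $a^k$ with $k\geq 3$ (and $k$ may be exponential in $n$, since blocks cross variables) cannot be absorbed in one step. This constraint is the whole reason the paper's block compression is so elaborate: it first renames $b$ to a fresh letter $c_b$ (note also that your type assignment $\theta'(X')=a$ is not even admissible when $a\in A$, since types map into $B\sm A$), introduces per-length letters $c_{\lam,b}$ with $\theta(c_{\lam,b})=c_b$, and then runs a halving loop of transitions $h(c_\lam)=cc_\lam$ and $h(c)=c^2$, with parity-fixing substitutions, all the while keeping $\abs W\leq 204n$ by the explicit amortized bookkeeping of \prref{block_comp_space}. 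Without that machinery your path simply leaves the automaton.

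The second gap is the length-decrease argument that makes the final state small. You run pair compression \emph{before} block compression and then appeal to a combined ``constant fraction'' claim. The paper's derandomized counting (\prref{pair_comp_small}) needs two hypotheses you do not secure: (a) $W$ contains no square factor $d^2$, which is precisely why block compression is performed \emph{first}; and (b) a bound on occurrences of factors $a\ov a$, which can never be compressed because a fresh letter $c$ with $\mu(c)=\mu(a\ov a)=0$ would violate \prref{def:wellf2}(4). The paper controls (b) by the marking argument tracing every potential $a\ov a$ back to at most $2\abs\Winit$ marked positions originating in $\Winit$; your averaging step counts all ``adjacent distinct-letter positions'' and so silently includes both obstructions. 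Since the admissible slack is only $\pcc-\pagebreak[0]\bcc=8n$ while uncrossing itself adds up to $8n$, the quantitative conclusion $\abs{W''}\leq \bcc$ really does depend on this counting and cannot be waved through as ``comfortably exceeds the gap''. The skeleton (pop, uncross, compress, alphabet-reduce, monotone weight via \prref{lem:varsub}, \prref{lem:compitras}, \prref{lem:alpred}) is the right one, but as written the block-compression transitions are illegal and the smallness of the endpoint is unproven.
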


\subsubsection{Proof of Lemma~\ref{lem:complett}}\label{subsec:polemco}
The assertion of the lemma is trivial, if $V$ is small. That is: 
$\abs W \leq \bcc$. Hence, we may assume $\bcc <\abs {W} \leq \pcc$.  
Let $V = (W, B, \cX,\es,\mu)$ be a state with a fixed \solu  $(\alp,\sig)$ satisfying the hypothesis of \prref{lem:complett}. 
 We describe a
way to find a path through $\cA$ in terms of a procedure which ``knows'' the \solu $(\alp,\sig)$. 

\subsubsection*{Block compression}

We employ block compression only if $W$ contains a factor $b^2$, where $ b\in B$ and $b \neq \#$. Otherwise we move straight to the next procedure, called \emph{pair compression}. 
During the procedure we will increase the length of $W$ by  $\Oh(n)$, but at the end we will arrive at an equation where $\abs{W'} \leq \abs{W}$; and importantly,
$W'$ will not contain any proper factor $b^2$ with $b\in B$ and $b \neq \#$.
We give an example of this procedure in \prref{sec:blockexample}.

\begin{remark}
While this procedure is technical, the idea is quite simple. The goal is to eliminate long blocks $b^\ell$ that are visible in the equation. To do so we use transitions which replace $bb$ by $b$, just two letters at a time. Before we can apply such a compression, we must ensure the length of any maximal block $b^\ell$ with at least part of the block visible must be {\em even}. So first we follow various substitution and compression transitions to arrange this.  
\end{remark}

\begin{enumerate}
\item \textbf{Recording the constants with large exponents}. Due to the previous substitutions $X\mapsto bX$ in the preprocessing step,  we have that for each $X$ if $bX\leq W$ and $b'X\leq W$ are factors with $b,b'\in B$, then $\#\neq b=b'$. 
For each $b\in B\sm \os \#$ 
define two sets:
\begin{align*}
\Lambda_b &=\set{\lam \geq 2}{\exists db^\lambda e \leq \sig(W): d\neq b\neq e \textrm{ and some $b$ in  $db^\lambda e$  is visible}}, \\ 
\cX_b&=\set{X\in\cX}{bX \leq W \wedge \sig(X)\in bB^*}.
\end{align*}

Note that
\begin{equation}\label{eq:smallC}
\sum_{b} \abs{\Lambda_b} + \abs{\cX_b} 
\leq \abs W.
\end{equation}
By \prref{def:wellf2} we have $\Lambda_b = \Lam_{\ov b}$. 
Another fact is crucial: it might be that 
there are $X\in \cX \sm \cX_b$ with $\sig(X)\in bB^*$,
but then to the  left of every occurrence of $X$ there is (the same) letter $b' \in B\sm \os{\#,b,\ov b}$. In this case the block compression procedure does not touch the variable $X$ (although it may change $\sig(X)$).
If, on the other hand, $X\in \cX_b$, then a factor $bb$ crosses the left border for every occurrence of $X$. The first $b$ in such a factor is visible in $W$, the second one is not.

\item \textbf{Introducing the type and renaming of some constants}.
For each $b \in B$ with  $\Lambda_b\neq \es$ we introduce a \emph{fresh} letter $c_{b} \in C \setminus B$ with $\mu(c_{b})= \mu(b)$. In addition, for each 
$\lam \in \Lambda_b$ introduce a fresh letter  $c_{\lam,b}$ with $\mu(c_{\lam,b})= \mu(b)$. The fresh letters are chosen such that $\ov{c_{b}} = c_{\,\ov b}$ and  $\ov{c_{\lam,b}} =  c_{\lam,\ov b}$. Note that $c_{\lam,b}$ and $c_{b}$ are just names for formal symbols realized by fresh letters in the fixed extended alphabet $C$.

We let $B'= B\cup \bigcup\set{c_b,\ov{c}_b,c_{\lam,b},\ov{c}_{\lam,b}}{\lam \in \Lam_b \wedge b \in B}$ and we introduce a type 
by $\theta(c_{\lam,b}) =c_{b}$ for all $\lam \in \Lam_{b}$.
This yields a free partially commutative monoid
$M(B',\cX,\theta,\mu)$. We define an $\cX$-\morph $$h: M(B',\cX,\theta,\mu)\to M(B,\cX,\es,\mu)$$ by  
$h(c_{\lam,b}) = h(c_{b}) = b$. Next, we modify $W$:  
in every factor $db^\lam e$ of $\sig(W)$ with $d\neq b \neq e$ and $\lam \in \Lam_b$ we replace that factor by $dc_b^\lam e$. This defines a new word 
$W'$ such that $h(W') = W$. Note that so far, no $c_{\lam,b}$ does appear in 
$W'$. Let $V' = (W',B',\cX,\theta,\mu)$. Then $V'$ is a state and we can follow the  \tra $V\arc h V'$. We have $\Abs{V'} < \Abs{V}$ since 
$\theta\neq \es$ and this term appears before the number of constants in the weight of a state. (It might be that all $b$ are gone, so we cannot make sure that the second component in the weight decreased.) 
Note that for each $\lam \in \Lam$ at least one position labeled by $c_{b}$ is visible in $W$. 

We rename $V' = (W',B',\cX,\theta,\mu)$ as $V = (W,B,\cX,\theta,\mu)$ and rename the \solu as $(\alp,\sig)$. 

\item \textbf{Splitting the variables starting with special constants}. 
We skip this step if $\cX_b = \es$ for all $b$. Otherwise, 
for each $b \in B$ and $X \in \cX_b$ we write 
$\sig(X) = c_b^\ell w$ for some  $\ell \geq 1$ with $w \notin \os{b,c_b}B^*$.  
We split the variable $X$ by defining  
$\tau(X) = c_bX'X$ where $X' = X'_{b,X} \in \OO \sm \cX$ is a fresh variable, which is assigned a type $\theta'(X')= c_b$. 
Moreover, we let $\mu'(X') = \mu(c_b)^{\ell-1}$, $\mu'(X)=\mu(w)$, $\sig'(X') = c_b^{\ell-1}$,
and $\sig'(X) = w$. The new set of variables is a disjoint union
$$\cX'= \cX \cup \set{X'_{b,X},\ov{X'_{b,X}}}{b \in B \wedge X \in \cX_b}.$$
We obtain a new state  $V'= (\tau(W),B,\cX',\theta',\mu')$
 and a \morph
$$\tau: M(B,\cX,\theta,\mu) \to M(B,\cX',\theta',\mu').$$ 
The \morph $\tau$ defines a \subst \tra $V \arc{\eps}V'$ which pops a letter.
The new \solu at $V'$ is $(\alp, \sig')$. 

We rename  $V'= (\tau(W),B,\cX',\theta',\mu')$ as $V = (W,B,\cX,\theta,\mu)$
and rename the \solu as $(\alp,\sig)$. 
The next step introduces the letters $c_{\lam,b}$ into $W$ and $\sig(W)$.

\item  \textbf{Identifying a position in each block $dc_{b}^\lambda e$}. 
We represent $W \in M(B,\cX,\theta,\mu)$ by any word in $(B\cup \cX)^*$. 
For each letter $c_b$, we scan the word $\sig(W)$ from left to right and stop 
at each occurrence of a factor $dc_b^\lambda e$ where $\lam \in \Lam_b$ and $d \neq c_b \neq e$.
At the stop we do the following.
\begin{itemize}
\item If at least one of the $c_b$'s in this block is visible in $W$, then choose the left-most  corresponding visible position in $W$, and replace the label $c_b$ at this visible position by $c_{\lam,b}$. In $\sig(W)$, replace $dc_b^\lambda e$ by $dc_{\lam,b} c_b^{\lambda-1} e$.  
If no position of the $c_b$'s in this block is visible in $W$, then 
we make no change. 
 \end{itemize}
 Thus, from left to right, we transform the word $W$ into an element 
 $W' \in M(B,\cX',\theta,\mu)$ and simultaneously $\sig(W)$ into an element
 $\sig'(W')\in M(B).$ We obtain a new state
 $V'=  (W',B,\cX,\theta,\mu)$ and we can follow the 
 arc $V\arc{h}V'$ where $h$ is the $\cX$-\morph defined by a renaming 
 $h(c_{\lam,b}) = c_b$. Note that 
 $\Abs{V} > \Abs{V'}$ since for each $c_{\lam,b}$ a factor 
 $c_{\lam,b}c_b$ appears in $W'$, so there are more letters visible in $W'$ than in $W$, which decreases the second component in the weight of an \exe.  At $V'$ we obtain a new solution $(\alp,\sig')$; and as usual,
we rename  $V'= (\tau(W),B,\cX',\theta',\mu')$ as $V = (W,B,\cX,\theta,\mu)$ and rename the \solu as $(\alp,\sig)$.

Due to partial commutation we have the following: 
if a factor $f \in d\os{c_b,c_{\lam,b}}^\ell e$ occurs in $\sig(W)$ with $d,e \notin \os{c_b,c_{\lam,b}}$, then we have 
$\ell = \lam \in \Lam_b$, and $f= dc_{\lam,b} c_b^{\lambda-1} e \in M(B,\es,\theta,\mu)$. Moreover, if $\theta(X) = c_b$, then $X$ commutes with the letter $c_b$, but $X$ does not commute with any $c_{\lam,b}$.

\item \textbf{The block compression}.
As long as there exists a letter $c_b$ which occurs in $\sig(W)$, perform the following loop, which also finishes the block compression.  During the following loop we maintain the 
invariant: if $dc_{\lam,b} c_b^\ell e$ and $d'c_{\lam, b} c_b^{\ell'} e'$ are factors of 
$\sig(W)$ with  $d\neq c_b \neq e$ and $d'\neq c_b \neq e'$, then 
$\ell= \ell'$ and $\sig(W)$ contains a factor $\ov d\, \ov{c_{\lam,b}} \, \ov {c_b}^{\,\ell}\, \ov e$ as well. During the loop we perform various times a renaming in order to keep the notation $V$ and $(\alp,\sig)$ at the current states. 
Initially we define a list 
$$\Lam_B= \set{b\in B}{\Lam_b \neq \es}.$$

\medskip
\noindent {\bf while $\Lam_B \neq \es$ do}
\begin{enumerate}
\item For some $b\in \Lam_B$ remove $b$ and $\ov b$ from $\Lam_B$; 
\item Let $c = c_b$ and for all $\lam \in \Lam_b$ abbreviate $c_{\lam,b}$ as $c_{\lam}$.
\item {\bf while $\abs{\sig(W)}_{c} \geq 1$ do}
\begin{enumerate}
 \label{hugo} \item For all $X$ with $\theta(X) = c$ where $\abs{\sig(X)}$  is odd, follow a substitution transition of type $X \mapsto cX$. Hence, we may assume that $\abs{\sig(X)}$ is even for all $X$ with $\theta(X) = c$. 
\item Remove all $X$ {}from $\cX$ where $\sig(X)=1$. Observe, if 
there remains a variable $X$ with $\theta(X) =c$, then $\sig(W)$ contains a factor $c^2$. 
\item For all $c_\lam$ where $\sig(W)$ contains a factor 
$dc_\lam c^\ell e$ where $d\neq c \neq e$ and $\ell$ is odd, follow a compression \tra with $h(c_\lam) = c c_\lam$. 

In order to see that this is possible observe that for every occurrence of such a factor $dc_\lam c^\ell e$ 
there are only two possibilities. Either none of the positions of $c_\lam c^\ell$ are visible in $W$, or the position of $c_\lam$ is visible in $W$. 
Moreover, $c$ commutes with $c_\lam$ and with all $X$ where $\theta(X) =c$; and $\abs{\sig(X)}$ is even for those $X$. Thus, wherever $c_\lam$ is visible
in $W$, the factor $cc_\lam$ is visible in $W\in M(B,\cX,\theta,\mu)$. 

Still, we need to be more precise in order to guarantee a weight reduction.  
The $\cX$-\morph defined by $h(c_\lam) = c c_\lam$ leads to new element
$W'\in M(B,\cX,\theta,\mu)$ and a new \solu $(\alp h , \sig')$. In case 
that no letter $c$ occurs in $\sig'(W')$ anymore, the letter $c$ and the type becomes useless. Thus, if $\abs{\sig'(W')}_{c} = 0$, then we actually follow 
 a compression \tra
 $$V \arc h (W',B',\cX,\theta',\mu)$$
 where $B' = B \sm\os{c, \ov c}$ and hence $\abs{\theta'} < \abs {\theta}$. 
 Nevertheless $\Abs{V} > \Abs{V'}$ since $\abs{W'} < \abs{W}$ due to compression.
\item If there exists  a variable $X$ with $\theta(X) = c$, then we know 
$\sig(X) = c^2c^\ell$ where $\ell$ is even. We follow a \subst arc
defined by $X\mapsto c^2X$ 
in order to guarantee that a factor $c^2$ becomes visible in $W$.  
\item Due to the previous steps: either we have $c\notin B$ or 
$W$ contains a visible factor $c^2$. In the first case, we skip this step. 
Thus, we assume that $W$ contains a visible factor $c^2$.
Now, if $\sig(W)$ contains a factor
$dc_\lam c^\ell e$ where $d\neq c \neq e$, then $\ell$ is even; and
if $\theta(X) = c$, then $\sig(X) = c^{j}$ and $j$ is even, too.
Thus we can follow a compression \tra defined by $h(c) = c^2$. This leads to a new equation $W'$ with $h(W') = W$ and new solution $\sig'(W')$ and the 
number of occurrences of $c$ and $\ov c$ is halved. 
Note that $\Abs{V}> \Abs{V'}$ since $W$ contains a factor $c^2$. Hence, 
$\abs{W}> \abs{W'}$. 
Rename the parameters
to $V,W,B,\cX,\theta,\mu, \alp, \sig$.  
\end{enumerate}
\noindent{\bf endwhile}
\item Rename all $c_\lam$ by $c_{\lam,b}$.
\end{enumerate}
\noindent{\bf endwhile}
\end{enumerate}

\subsubsection*{Space requirements for the block compression}\label{subsec:spaceBlock}
Let us show that the block compression can be realized inside $\cA$. 
\begin{lemma}\label{block_comp_space}
 Let $V=(W,B,\cX,\es,\mu)$ be the state after preprocessing, when we enter ``block compression'', 
  and let $V'=(W',B',\cX',\es,\mu')$ be the state at the end of block compression. 
Then $V'$, as well as all intermediate states between $V$ and $V'$, are in $\cA$. Moreover, $\abs{W'} \leq \pcc$.
\end{lemma}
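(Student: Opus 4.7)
The plan is to track both $|W|$ and $\sum_X|W|_X$ through the five phases of block compression, verifying at every intermediate equation the conditions of Definition~\ref{def:wellf2} (in particular $|W|\le 204n$ and $\sum_X|W|_X\le 12n$) as well as the terminal bound $|W'|\le\pcc$.

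First I would handle phases~2 and 4: both are pure renamings that leave $|W|$ unchanged, and the new constants $c_b$, $c_{\lam,b}$ they introduce are bounded in number by~(\ref{eq:smallC}), so $|B|$ stays linear in $n$. Phase~3 replaces each occurrence of every $X\in\cX_b$ by $c_bX'X$, so the length grows by two per occurrence. Since block compression is entered with $\theta=\es$, condition~(2) of Definition~\ref{def:wellf2} gives $\sum_X|W|_X\le 4n$; hence phase~3 adds at most $8n$ to $|W|$ and leaves $\sum_X|W|_X\le 2\cdot 4n=8n<12n$, which also covers the updated bound once $\theta\neq\es$.

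The core of the argument is phase~5. For a fixed $c=c_b$, the inner while-loop alternates substitutions (substeps~(i), (iv)) that can grow $|W|$ with compressions (substeps~(iii), (v)) that shrink it. Substep~(v), $h(c)=c^2$, at least halves the number of visible $c$-positions in $W$, so the inner loop terminates in $O(\log|W|)$ iterations. The additive growth from (i) and (iv) is bounded by $\sum_{\theta(X)=c}|W|_X$, and each such substitution is immediately followed by a compression that strictly decreases the lexicographic weight $\Abs{V}$. Telescoping across all outer iterations of $\Lam_B$, the cumulative inflation is $O(n)$, so $|W|$ stays below $\pcc+O(n)<204n$, keeping every intermediate state a valid element of $\cA$.

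For the terminal bound, I would argue that when phase~5 ends no $c_b$ survives in $\sigma(W)$, so each maximal block $b^\lam$ originally visible in $\sigma(W)$ (for $\lam\in\Lam_b$) is represented by a single typed letter $c_{\lam,b}$ in $W'$. Hence the visible positions of $W'$ are no more numerous than those of the equation entering phase~5, which was of length $\le\pcc$; the extra variables inserted in phase~3 are absorbed inside the $6|\Winit|$ slack of $\pcc=104n+6|\Winit|$, using $|\Winit|<6n$ from~(\ref{eq:UVwinit}). The hard part will be the bookkeeping in phase~5: one must exploit the partial commutation $c\cdot c_{\lam,b}=c_{\lam,b}\cdot c$ and the type relation $\theta(X)=c$ to ensure substitutions $X\mapsto cX$ and $X\mapsto c^2X$ actually bring a visible factor $c^2$ into $W$, so that the halving step~(v) is always applicable and every substitution is amortized away by the immediately following compression rather than accumulating into unbounded growth.
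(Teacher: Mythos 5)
Your skeleton matches the paper's (renamings are length-neutral, phase~3 grows $W$ linearly, phase~5 is amortized by the halving step, and the final equation is no longer than the one entering block compression), but the quantitative content — which is what this lemma actually asserts — is wrong or deferred at the decisive points. First, phase~3 adds up to $16n$, not $8n$: since $\cX=\ov\cX$ and the equation is closed under involution, an occurrence of $X$ can be split on \emph{both} sides (the paper's ``$aX'XX''b$'' case, when both $X\in\cX_b$ and $\ov X\in\cX_{b'}$), so each of the at most $4n$ variable occurrences gains up to $2$ new variable occurrences and $2$ new constants; this is also why \prref{def:wellf2}(2) allows $\sum_X\abs{W}_X\le 12n$ once $\theta\neq\es$, not the $8n$ you state. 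Second, ``cumulative inflation is $O(n)$, so $|W|$ stays below $\pcc+O(n)<204n$'' cannot establish membership in $\cF$: the bound $204n$ in \prref{def:wellf2} is attained \emph{exactly}. The paper's accounting is: entry length $\le\pcc$, plus $16n$ from phase~3, plus at most $24n$ fresh $c$'s per inner iteration with halving, whose accumulated contribution is bounded by $48n$ (the recurrence $x\mapsto(x+24n)/2$), giving a peak of $168n+6\abs{\Winit}\le 168n+36n=204n$. An argument with unspecified constants, or with your undercounted $8n$, does not verify the hard-coded bound that defines the states, so the ``bookkeeping'' you postpone is not a detail — it is the lemma.

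The terminal bound is also argued incorrectly. You claim the equation entering phase~5 has length $\le\pcc$ (it can be as long as $120n+6\abs{\Winit}>\pcc$ after phase~3) and that the phase-3 variables are ``absorbed inside the $6\abs{\Winit}$ slack of $\pcc$'' — but there is no slack: the state entering block compression may already satisfy $\abs W=\pcc$. The correct reason, as in the paper, is structural: every auxiliary variable $X'$ has $\sig(X')\in c_b^*$, so by the end of the inner loops it maps to the empty word and is removed, whence $\cX'\sse\cX$; and every visible occurrence of a letter $c_{\lam,b}$ in $W'$ sits on a position where a $b$ was visible in the entering equation $W$, while the remaining visible $c_b$'s have been compressed away. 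Hence the letters of $W'$ inject into positions of $W$ and $\abs{W'}\le\abs W\le\pcc$, with no appeal to slack at all.
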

\begin{proof}
At the end of block compression we have $\cX'\sse \cX$, and each visible position of the new letter  $c_{\lam,b}$ occupies a position where some letter $b$ was visible in $W$. 
Thus, $\abs{W'}\leq \abs{W} \leq \pcc$. 

To show that the procedure stays inside $\cA$ we calculate the maximum length of an intermediate equation during the process.
We start block compression with $\abs{W}\leq \pcc$, and $\abs\cX \leq 4n$. 
In step (3) we add at most $8n$ new  variables $X'$ and at most $8n$ constants (we may substitute a variable $X$ by $aX'XX''b$ in the case that $\sig(X)=a^\ell wb^{\ell'}$).
So the length of the intermediate equation at this step is at most $\pcc+16n=120n+\abs\Winit$.
The only other step of block compression that adds length to the equation during the inner while-loop in step (5).

We start this loop with$\abs W\leq 120n+\abs\Winit$ and with at most $8n$ typed variables (the variables that were added in step (3)). We perform the loop at step (5c) with one letter $c\in \Lam_B$ fixed.

In step (i) we pop at most one $c$ letter  for each typed variable, and in step (ii) 
we pop $c^2$ for each typed variable, so we add  at most $3\cdot 8n=24n$ $c$'s, 
and then in step (v) we halve the number of $c$'s, so overall we add at most $12n$ 
$c's$. We repeat this loop until all $c$'s are eliminated.
In each iteration we add at most $24n$ new $c$ letters, but then divide the total 
number of $c$ letters by $2$. 
If we just consider the number of new $c$ letters added from the start of the while 
loop, we see that after each iteration the number of new $c$ letters remaining is at most:
\[ \begin{array}{|c|c|c|c|c|}
\hline
\text{iteration} & \text{number before } & \text{number added} & \text{number before} & \text{number after}\\
\text{} & \text{step (i)} &  & \text{step (v)} & \text{step (v)}\\
\hline
1 & 0 & 24n & 24n & 12n\\
2 & 12n & 24n & 36n & 18n\\
3 & 18n & 24n & 42n & 21n\\
4 & 21n & 24n & 45n & 23n\\
5 & 23n & 24n & 48n & 24n\\\hline
\end{array}\]
Thus the total length of $W$ is never more than 
\begin{equation}\label{eq:longestW}
120n+6\abs\Winit+48n=168n+6\abs\Winit
\end{equation} 

Since this call of the inner while-loop eliminates all occurrences of the letter $c$, at the end of each 
call the length of $W$ returns to being bounded above by $120n+6\abs\Winit$, when we 
repeat the while-loop at step (5c) for another constant in $\Lam_B$, until $\Lam_B=\es$.
Thus all states are in $\cA$. 
\end{proof}

For the final state $V'= (W',B',\cX',\es,\mu')$ 
the type relation is empty. If $V'$ is small, that is, $\abs{W'} \leq \bcc$, then \prref{lem:complett} is shown. 
Thus, without restriction we again have 
$$\bcc < \abs{W'} \leq \pcc.$$

%%%%%%%%%%%%%%%%%%%
\subsubsection*{Pair compression}\label{subsec:pairco}
After block compression we run {\em pair compression}, 
following essentially the  formulation of Je\.z's original procedure \cite{Jez16jacm}. 
We start a pair compression at a state $V_{p}=(W,B,\cX,\es,\mu)$ where we have:
\begin{itemize}
\item $\abs{W}_b \geq 1$ for all $b \in B\sm A$. 
\item $\bcc < \abs {W} \leq \pcc$. 
\item $W$ doesn't contain any proper factor $b^2$ with $b\in B\sm {\#}$.
\item The current \solu is denoted by $(\alp,\sig)$.
\end{itemize}
The goal of the process is to end at a state $V_{q}=(W'',B',\cX'',\es,\mu'')$
with $\abs {W''} \leq \bcc$ by some path satisfying the forward property and without 
increasing the weight. Moreover, there will be no types in this phase. Note that the constraints make sure that
$\sig(X)$ does not contain any factor $a\ov a$, but we cannot rule out that 
$W$ contains such factors. However, the number of  $a\ov a$ factors remains 
bounded by $\abs\Winit$, since they can only occur after preprocessing $\Winit$.

Consider all partitions $B\sm \os \# = L \cup R$ such that 
$b\in L \iff \ov b \in R$. Note that there is no overlap between  
factors $ab,cd \in LR$ unless $ab = cd$. Moreover
$$ab\in LR \iff \ov b \ov a \in LR.$$
For each choice of $(L,R)$ we count the number positions in $W$ where 
some factor $ab\in LR$ with $\ov a \neq b$ begins. We intend to compress all these factors into single letters.

\begin{remark}\label{rem:choiceLR}
We choose and fix one of the partitions $(L,R)$ such that the number of factors $ab\in LR$ 
in $\sig(W)$ such that $\ov a \neq b$ and at least one of $a$ or $b$ visible is maximal.
\end{remark}

We say that $ab\in LR$ is \emph{crossing} if $W$ contains either a factor 
$aX$ with $\sig(X) \in bB^*$ or a factor 
$\ov b X$ with $\sig(X) \in \ov a B^*$ (or both). 
In the first phase we run the following procedure. 

\noindent
\textbf{Uncrossing}.
Create a list $\cL= \set{X\in {\cX}}{\exists b \in R: \sig(X)\in bB^*}$.\\
For each $X\in \cL$: 
\begin{itemize}
\item choose $b \in R$ such that  $\sig(X)\in bB^*$ and  follow a \subst \tra $X \mapsto bX$. 
\end{itemize}
This concludes the ``uncrossing''; and, as done previously we rename the parameters to $V,W,B,\cX,\mu,\alp,\sig$. 

Above, when we follow $X \mapsto bX$ with $b \in R$, then automatically 
$\ov X$ is replaced with $\ov X \, \ov b$, and $\ov b \in L$. 
We also have $\os{X,\ov X} \sse \cL$ \IFF $\sig(X) \in bB^*a$ for some $ab\in LR$.
In that case we actually substituted $X$ by $bXa$ and $\ov X$ by 
$\ov a \ov X\,  \ov b$. Recall that we have at most $4n$ variables in $W$.
Thus, at this stage we have: 
\begin{equation}\label{eq:uncross}
\abs W \leq \pcc + 8n=112n+\abs\Winit
\end{equation}

The second phase begins with creating a list 
$\cP=\set{ab\in LR}{\ov a\neq b}$. After that we run the following while-loop. 
\medskip

\noindent\textbf{while $\cP \neq \es$ do}
\begin{enumerate}
\item Define $$B'= A\cup\set{a\in B}{\abs{W}_a \geq 1 \vee \exists X\in \cX: \sig(X) \in aB^*}.$$
If $B'\neq B$, then follow a \subst \tra $V\arc \eps (W,B',\cX,\es,\mu)$
where the label $\eps = \id{C^*}$ yields the inclusion of 
$M(B',\es,\es,\mu)$ into $M(B,\es,\es,\mu)$.
Rename the parameters to $V,W,B,\cX,\mu,\alp,\sig$.
\item Select and remove some pair $ab$ in $\cP$. 
If $ab$ does not occur as a factor in $W$, then do nothing, else perform the next steps. 
\item Choose a fresh letter $c = c_{ab} \in C\sm B$ with $\mu(c) = \mu(ab)$ and 
let $B''= B \cup \os{c, \ov c}$. Define an $\cX$-\morph 
$$h: M(B'',\cX,\es,\mu')\to M(B,\cX,\es,\mu)$$
 by $h(c) = ab$. 
\item Replace in $W$  all factors $ab$ by $c$ and all factors $\ov b \ov a$ by $\ov c$. Let $W'\in (B'\cup \cX)^*$ be the new word and 
$V'= (W',B'',\cX,\es,\mu')$ be the new state. We have $W = h(W')$; and hence 
there is a compression transition 
$$V \arc h V'.$$
\item Follow the compression transition 
$V \arc h V'$;  and rename the parameters to $V,W,B,\cX,\mu,\alp,\sig$. 
\end{enumerate}
\noindent\textbf{endwhile}\\ 

\begin{lemma}\label{lem:prwhlo}
During the while-loop for pair compression
the following properties hold. 
\begin{enumerate}
\item After the first step, where the new alphabet $B'$ is created
(and then renamed as $B$) we have $\abs B \leq \abs W + 2$. 
\item No factor $ab\in LR$ ever becomes crossing.
\item At each step where we move from state $V$ to $V'$ we have $\Abs V > \Abs{V'}$.
\item Each \tra satisfies the forward property. 
\end{enumerate}
\end{lemma}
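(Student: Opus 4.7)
The plan is to establish the four properties in turn; (3) and (4) will reduce to direct appeals to existing lemmas, (1) is a counting argument, and (2) will be the main obstacle, requiring a loop invariant.

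For (4) I would first note that step~1 of the loop is an instance of \prref{lem:alpred}: it is induced by $\id{C^*}$ and merely restricts $\mu$ to a proper subalphabet. Steps~3--5 constitute a genuine compression falling under \prref{lem:compitras} with $h(c)=ab$ and $h(\ov c)=\ov b\,\ov a$; the required factorization $\sig=h\circ\sig'$ will be obtained by collapsing every $ab$-factor in $\sig(X)$ to $c$ (and the symmetric $\ov b\,\ov a$ to $\ov c$). For (3) I plan to use the lexicographic ordering of \prref{def:weightexe}: step~1 preserves $\abs W$, the set of visible constants, and $\abs\theta=0$ while strictly decreasing $\oo_4=\abs B$; step~5 strictly decreases $\oo_1=\abs W$ because $\ov a\neq b$ forces $ab$ and $\ov b\,\ov a$ to be distinct factors, both present, whose replacement by single letters removes at least two positions.

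For (1) I would argue that after step~1 one has $B\sm A\sse\set{a\in B}{\abs W_a\geq 1}\cup\set{a\in B}{\exists X\in\cX\colon\sig(X)\in aB^*}$. Each element of the first set occupies at least one position of $W$. For the second set, \prref{def:wellf2}(5) guarantees that every $X\in\cX$ is visible in $W$, so each first letter $a$ can be injectively assigned to a visible occurrence of some $X$ with $\sig(X)\in aB^*$. Together this will yield $\abs{B\sm A}\leq\abs W$, with the additive constant absorbing the small residual contribution coming from $A$.

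The hard part will be (2). I plan to maintain the invariant: \emph{no pair $ab\in LR$ with $\ov a\neq b$ is crossing.} The base case is supplied by uncrossing: for $X\in\cL$, every occurrence of $X$ in the new $W$ is preceded by the letter $b\in R$ introduced by the substitution, hence not by any letter of $L$; for $X\notin\cL$, the solution $\sig(X)$ does not start with any letter of $R$ by the definition of $\cL$, and remains unchanged. Involutional symmetry then handles the $\ov b X$ half of the definition. For the inductive step, a compression in steps~3--5 modifies $W$ and $\sig$ only in two ways: constant factors $ab$ and $\ov b\,\ov a$ in $W$ become $c$ and $\ov c$, and the first letter of $\sig'(X)$ differs from that of $\sig(X)$ precisely when $\sig(X)\in abB^*$, in which case it becomes $c$. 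Since the fresh letter $c$ lies outside $L\cup R$, neither modification can create a crossing from a non-crossing pair in $LR$. The alphabet-reduction substep of~1 alters neither $W$ nor $\sig$, so the invariant is trivially preserved there.
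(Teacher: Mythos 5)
Your items (3) and (4) follow the paper's route, but your treatment of item (2) at the alphabet-reduction substep rests on a false premise, and this is precisely where the real content of (2) lies. Following $V \arc \eps (W,B',\cX,\es,\mu)$ with $B'\varsubsetneq B$ does leave $W$ unchanged (every letter occurring in $W$ lies in $B'$ by construction), but it does \emph{not} leave $\sig$ unchanged: by \prref{lem:alpred} the new solution is $\sig'(X)=\bet\sig(X)$, where $\bet$ replaces each occurrence of a discarded letter $b\in B\sm B'$ inside $\sig(X)$ by $\alp(b)\in A^*$. A priori this could change the first (and, by involution, the last) letter of $\sig(X)$ --- the image under $\alp$ of a discarded letter might begin with a letter of $R$ --- and this is exactly the ``potential problem'' the paper's proof singles out. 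The invariant is preserved only because $B'$ is defined to contain every $a$ such that $\sig(X)\in aB^*$ for some $X\in\cX$, so the first letters of all the $\sig(X)$ survive the reduction untouched and no pair $ab\in LR$ can become crossing. As written, your sentence ``alters neither $W$ nor $\sig$'' is simply wrong whenever $B'\neq B$, and without the observation about first letters the inductive step for (2) does not go through.

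Two smaller points. In (1), the additive $2$ does not ``absorb'' the contribution of $A$ --- note that $\abs A$ is in general much larger than $2$ --- it accounts for the two fresh letters $c,\ov c$ adjoined when $B''$ is formed in step (3) of the loop, so that the bound persists until the next alphabet reduction. The paper charges \emph{all} of $B'$ (including $A$) against positions of $W$ to get $\abs{B'}\leq\abs W$ immediately after step (1), and then $\abs{B''}\leq\abs W+2$; your bound $\abs{B\sm A}\leq\abs W$ plus a wave at $A$ does not yield the stated inequality. Finally, in (4) the paper explicitly invokes the non-crossing property from (2) when producing the $B''$-morphism $\sig'$ with $\sig(X)=h\sig'(X)$; your construction (collapse all $ab$ and $\ov b\,\ov a$ factors inside each $\sig(X)$) is the same, but you should make the dependence on (2) explicit, since it is what guarantees that compressing $W$ and the $\sig(X)$ separately is consistent with compressing $\sig(W)$, i.e.\ that $\sig'$ is a genuine solution at $V'$.
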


\begin{proof}
\begin{enumerate}
\item In the first step inside the loop, when the new alphabet $B'$ is created, we have
$\abs{B'} \leq \abs W$. Therefore, after the first renaming,  we have
$\abs B \leq \abs W$. When we define $B''$, we add two new letters. Hence, 
we obtain $\abs{B''} \leq \abs W + 2$, which yields, after renaming, 
$\abs B \leq \abs W + 2$. This property persists during subsequent loops. 
\item We have to show that no factor $ab\in LR$ ever becomes crossing.
To see this, consider the alphabet reduction by 
following the \tra $V\arc \eps (W,B',\cX,\es,\mu)$ with $B'\neq B$.
It involves  replacing every letter $a\in B\sm B'$ by $\alp(a)$ according to \prref{lem:alpred}. The potential problem is that 
we might have $a\in L$, but  $\alp(a)$ starts with a letter in $R$, so we might create new $LR$ factors. However as $B'$ contains all letters $a$ 
where $\sig(X) \in aB^*$ for some $X$, we never introduce any new crossing pairs. 
\item The assertion $\Abs V > \Abs{V'}$ is trivial.
\item The \tra $V\arc \eps (W,B',\cX,\es,\mu)$ with $B'\neq B$ satisfies the forward property by \prref{lem:alpred}. 
In order to see that $V\arc h V'$ satisfies the forward property when 
we have $h(c) = ab$ we proceed as follows. As done for $W$, also replace in $\sig(W)$  all factors $ab$ by $c$ and all factors $\ov b \ov a$ by $\ov c$. Since $ab$ is not crossing, we find a 
$B'$-\morph $$\sig':M(B',\cX,\es,\mu')^*\to M(B',\es,\es,\mu')$$ such that 
$\sig(X) = h\sig'(X)$ for all variables $X$. 
Thus, we obtain $(\alp h, \sig')$ as a \solu at $V'$
\end{enumerate}
\end{proof}

\begin{lemma}\label{pair_comp_small}
Let  $V_{p}=(W, B,{\cX}, \es, \mu)$ be a state in $\cA$ with a \solu $(\alp,\sig)$ where $ \bcc < \abs W \leq \pcc$ such that $W$ doesn't contain any factor $d^2$ for $\# \neq d\in B$. Let $(L,R)$ be the partition with $B\sm \os \# = L \cup R$ according to the choice made in
 \prref{rem:choiceLR}. Then pair compression on $V_p$  leads to a state
 $V_{q}=(W'', B',{\cX}, \es, \mu'')$  with  $|W''|\leq \bcc$, that is, the state $V_q$ is small. Moreover, the intermediate steps of the pair compression algorithm are performed within $\cA$.
\end{lemma}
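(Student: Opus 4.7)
The plan is to apply Je\.z's averaging argument to the partition $(L,R)$ fixed in \prref{rem:choiceLR}, showing that the round of pair-compressions shrinks $W$ by a constant fraction of $\abs W$ and drives $\abs{W''}$ below the threshold $\bcc$.

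First I would verify that all intermediate states lie in $\cA$. By \prref{lem:prwhlo}(3), $\Abs V$ strictly decreases along every arc of the while-loop, so $\abs W$ is monotonically non-increasing once the loop starts. It therefore suffices to bound $\abs W$ at the end of uncrossing: equation~(\ref{eq:uncross}) together with $\abs \Winit \leq 6n$ gives $\abs W \leq 112n + 6\abs\Winit \leq 148n < 204n$, so \prref{def:wellf2}(1) holds. The remaining conditions of \prref{def:wellf2}---the $\#$-count, the $\mu$-constraints, and closure under involution of factors---are preserved by the explicit construction, and the forward property of every arc is \prref{lem:prwhlo}(4).

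Next come the counting and averaging. Call $1 \leq i < \abs W$ a \emph{good} position if $W_i,W_{i+1}$ are both constants distinct from $\#$ with $W_i \notin \{W_{i+1},\ov{W_{i+1}}\}$. Since block compression has eliminated all $d^2$ factors with $d \neq \#$, the only way $i$ can fail to be good is that (a) $W_i$ or $W_{i+1}$ lies in $\cX\cup\{\#\}$, contributing at most $2\sum_X\abs W_X + 2\abs W_\# \leq 8n + O(n)$ via \prref{def:wellf2}(2)--(3) (using $\theta=\es$), or (b) $W_i = \ov{W_{i+1}}$, contributing at most $O(n)+O(\abs\Winit)$ after tracking how popping substitutions in the preprocessing and uncrossing phases can introduce new $a\ov a$ factors at variable/constant boundaries. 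Hence the number $N$ of good positions satisfies $N \geq \abs W - c\cdot n - c'\cdot \abs\Winit$ for some absolute constants $c,c'$. For a uniformly random admissible partition $(L,R)$---orient each orbit $\{c,\ov c\}$ with $c \neq \ov c$ independently---a good position is in $LR$ with probability exactly $1/4$, since $W_i$ and $W_{i+1}$ lie in distinct independent orbits. The choice in \prref{rem:choiceLR} therefore achieves at least $N/4$ compressible positions.

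Finally, two compressible positions cannot be adjacent (otherwise $W_{i+1}\in L\cap R=\es$), so all $\geq N/4$ of them are compressed inside the while-loop, each shrinking $\abs W$ by one. Combining $\abs{W''} \leq \abs W - N/4$ with $\abs W \leq 112n+6\abs\Winit$ and the estimate on $N$ yields $\abs{W''} \leq \bcc = 96n+6\abs\Winit$, provided $\bcc$ and $\pcc$ are chosen with enough slack to absorb $c\cdot n+c'\cdot \abs\Winit$---which is exactly the calibration baked into the paper's macros. The main obstacle in carrying this out rigorously is the bookkeeping for contribution (b): one needs a clean invariant bounding the number of $a\ov a$ factors in $W$ throughout preprocessing and uncrossing, since such factors can be created whenever a substitution $X\mapsto bX$ happens to produce $b=\ov a$ next to $X$ in $W$.
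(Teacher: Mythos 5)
Your overall strategy (random-partition averaging over $(L,R)$, then invoking the maximal choice of \prref{rem:choiceLR}, plus the observation that compressible positions cannot overlap) is the same as the paper's, but there is a genuine gap exactly where you flag it: the bound on contribution (b), the factors $W_i=\ov{W_{i+1}}$. This is not a routine bookkeeping detail that can be absorbed into unspecified constants $c,c'$ --- it is the heart of the lemma. Note that $V_p$ is an arbitrary state of the trim NFA $\cA$, reached by a possibly long path on which \emph{every} round has a popping/preprocessing phase and many compression transitions; a priori, factors $a\ov a$ could accumulate round after round or be created and hidden by compressions, so an analysis of ``the preprocessing and uncrossing phases'' of the current round alone cannot bound them. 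The paper closes this with a marking argument along the \emph{entire} path from the initial state: all letters of $\Winit$ are marked; under a substitution transition each marked variable contributes exactly one marked popped constant and marks are otherwise inherited; under a compression transition a letter is marked iff its $h$-image meets a marked position. One then shows every occurrence of $a\ov a$ in $W$ contains a marked position while the number of marked constants never exceeds $\abs\Winit$ (hence at most $2\abs\Winit$ marked letters after also marking the variables of $W$). Without this invariant your lower bound $N\geq \abs W-c\,n-c'\abs\Winit$ on good positions is unproved, and the whole averaging step has nothing to stand on.

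A secondary weakness is that you defer the final numerical verification to ``enough slack in the macros.'' Here the slack is tight: $\pcc-\bcc=8n$ and uncrossing can add up to $8n$ letters, so one must actually check that the number of compressed occurrences is at least $16n$ (the paper does this via the factorization $W=x_0u_1x_1\cdots u_\ell x_\ell$ into unmarked length-$3$ blocks, getting $\ell>32n$ from $\abs W>\bcc=96n+6\abs\Winit$ and $\sum_i\abs{x_i}\leq 6\abs\Winit$, whence at least $\ell/2>16n$ expected compressions and the chain $\abs{W''}\leq \abs W+8n-\ell/2\leq \abs W-8n\leq\bcc$). Your claim that intermediate states stay in $\cA$ is fine (lexicographic weight decrease forces $\abs{W}$ non-increasing during the loop, and $\abs W\leq \pcc+8n\leq 148n<204n$ after uncrossing), but the quantitative core of the proof --- the marking invariant and the explicit count --- still needs to be supplied.
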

\begin{proof}
Recall that the NFA $\cA$ is trim. Hence, there is a path 
$$V_0 \arc {h_1} \cdots V_{p-1} \arc  {h_p} V_p$$
from an initial state with the appropriate $\mu$ to $V_p$. Let $V_{i}=(W_i, B_i,{\cX_i}, \theta_i, \mu_i)$. We perform the following {\em marking} process. The idea is that we wish to mark all constants in the $W_i$ which could possibly give rise to a factor $a\ov a$ in $W$. These factors can arise in exactly two ways: the initial equation may be unreduced to start with, or from a substitution (for example, we may have $aX$ or $YZ$  factors of the initial equation and we pop $X\ra aX$ or  $Y\ra Ya, Z\ra \ov  a Z$).
\begin{enumerate}\item  In  $W_0 = \Winit$ we mark all letters (both constants and variables).
\item If $V_{i-1} \arc  \eps V_i$ is a \subst \tra,  $W_i = \tau(W_{i-1})$ and the positions with constants in 
$W_{i-1}$ are mapped to positions with constants in $W_i$. 
We mark constants in $W_i$ that come from marked constants in $W_{i-1}$, and if $\tau(X)\in a\Gamma^*$ and $X$ is marked in $W_{i-1}$, we mark the newly added $a$ on the left of the variable $X$ in $W_i$, and leave  $X$ unmarked. If $\tau(Y)=Y$ and $Y$ is marked in $W_{i-1}$, we leave $Y$ marked in $W_i$. Note that in this way each marked variable gives rise to exactly one marked letter.
\item If $V_{i-1} \arc  h V_i$ is a compression \tra, then we have $h(W_i) = W_{i-1}$. 
Mark a constant $c$ in  $W_i$ if it is mapped by $h$ to an occurrence of a factor containing a marked position in $W_{i-1}.$
\end{enumerate}
Note that since the pair compression procedure  is always preceded by the preprocessing step above, we can assume that  every variable $X$ in $\Winit$ has been replaced by $aX$ where $a$ is marked, so in $V_p$ the word $W$ contains at most $\abs\Winit$ marked constants and no marked  variables.

When we run the pair compression procedure on $W$ we cannot compress pairs $a\ov a$, or pairs containing variables. If we now mark all variables present in $W$, then we are allowed to compress any pairs of letters in $W$ that are unmarked. After marking the variables we have at most $2\abs\Winit$ marked letters in $W$.

Let us factor the word $W \in (B \cup \cX)^*$ as 
$W = x_{0}u_{1}x_{1}\cdots u_{\ell}x_{\ell}$, where $\ell$ is chosen to be maximal that for all $1 \leq i \leq \ell$ we have:
\begin{enumerate}
\item $x_i\in (B\cup \cX)^* $.
\item $u_{i}\in (B\sm \os \#)^*$ and $u_i$ doesn't contain any marked position. 
\item The length of each $u_{i}$ is exactly $3$.
\end{enumerate}
The factorization enjoys the  following properties. 
\begin{itemize}
\item Since all $\#$'s are marked, we have $x_0\neq 1 \neq x_\ell$.
Some other $x_i$ can be empty.
\item Since we require $\abs{u_{i}} = 3$ it may be that $x_i$ contains 
for each marked position also two unmarked position. The exception is the first position in $x_0$. Hence, we obtain
$$\sum_{0 \leq i \leq \ell}\abs{x_i} \leq 3(2\abs{\Winit})-2\leq 6\abs\Winit.$$
\item Since $\abs W - 6\abs{\Winit}>96 n$, the previous line yields 
$$\ell > 32n.$$
\end{itemize}

Consider the word $W'$ which was obtained via the \subst transitions, but before the compression of factors $ab\in LR$ into single letters. 
The increase in length, which is $\abs{W'} - \abs{W}$, comes from the
substitution transitions  $X\mapsto bX, \ov X\mapsto \ov X \, \ov b$ with $X \in \cL$, so the length goes up by at most $8n$.
Note that the $u_i$ factors do not change, only the $x_i$ factors do.
Hence $W'$ has the factorization 
$W' = y_{0}u_{1}y_{1}\cdots u_{\ell}y_{\ell}$ with  $y_i \in (B \cup \cX)^*$ and  
\begin{align}\label{eq:loWw}
\abs{y_{0}\cdots y_{\ell}}\leq \abs{x_{0}\cdots x_{\ell}} +8n.
\end{align}
Finally, let $W''$ be the word obtained after pair compression has been performed. The word $W''$ is the compression of some
 word 
$y_{0}v_{1}y_{1}\cdots v_{m}y_{m}$
where each $v_{i}$ is the result of the compression restricted to $u_{i}$.

Each $u_{i}$ can be written as $u_{i}= abc$ with $a,b,c \in B$. 
Since $W$ did not contain any proper factor $d^2$ with 
$d \in B$ by hypotheses (and as we have performed block compression first), 
we know $a \neq b \neq c$. Moreover, we cannot have $\ov a = b$ or 
$\ov c = b$ because in every occurrence of  $b\ov b$ in $W$ at least one position is marked. 

Assume for a moment that membership to $L$ or $R$ was defined uniformly at random. That is for each $\# \neq a \in B$ the probability for 
$a\ov a \in LR$ is $\frac12$ and independent of the other events 
``$b\ov b \in LR$'' for $a \neq b$.

There are two possibilities: either $b\in L$ or 
$b\in R$. 
In the first case, either $c\in R$ or $c\in L$, and in the second case either $a\in L$ or $a\in R$. Each event 
$bc\in LR, bc\in LL, ab\in LR, ab\in RR$
has probability $\frac14$, so with probability  $\frac12$ one pair in the factor $u_i$  is compressed: thus the expected length of a factor $v_i$ is  $\E{\abs{v_{i}}} = \frac32 + \frac22 =\frac52$.
By linearity of expectation, we obtain
\begin{align}\label{eq:looW}
\E{\abs{v_{1}\cdots v_{\ell}}} = \tfrac{5}{2}\ell.
\end{align}
Thus if the partition $(L,R)$ were chosen at random, we expect the length of the word $u_{1}\cdots u_{\ell}$ to decrease from $3\ell$ to $\tfrac{5}{2}\ell$ or less, that is, we expect at least $\tfrac{1}{6}\ell$ factors $u_i$ are compressed   (each $v_i$ has length either $2$ or $3$). 
But in \prref{rem:choiceLR} we made the best choice of  compressing a maximal number of pairs in $W'$. This means at least $\tfrac{1}{6}\ell$ factors of $W'$ are compressed. Hence, for the actual pair compression, we may estimate the length of $W''$  as follows.

\begin{alignat*}{2}
\abs{W''} 
&\leq \abs{x_{0}\cdots x_{\ell}} +8n  + \tfrac52\ell
&\qquad\text{ since $\tfrac{1}{6}\ell$ factors are compressed} 
\\ &= \abs {W} +8n - \tfrac{\ell}2 &\qquad\text{since 
$\abs{W} = \abs{x_{0}\cdots x_{\ell}} +3\ell$}
\\ & \leq \abs {W} -8n &\qquad\text{ since $\ell > 32n$ }
\\ & \leq \bcc &\qquad\text{ since $|W|\leq \pcc$.}
\end{alignat*}

Since $\abs{W''} \leq \bcc$, the last state $V_{q}= (W'',B',\cX,\es,\mu'')$ 
is small. 
\end{proof}

A linear bound on the size of $C$ is evident from the proofs above and an explicit bound is given next. 
Thus, we have shown \prref{lem:complett}.

\subsubsection{The size of the extended alphabet $C$: the choice of $\kappa$}\label{subsec:sizeC}

The longest equation $W$ we needed to establish completeness 
occurs  during block compression, where we found that $\abs W\leq 168n+6\abs\Winit$ (\ref{eq:longestW}). Combining this with  $\abs\Winit\leq 6n$   (\ref{eq:UVwinit})
we obtain
\begin{equation}\label{eq:CsizeOne}
\abs W \leq 
168n+36n=204n.
\end{equation} 
The largest
alphabet we ever needed during  block and pair compression was less than 
$$3 \cdot (\abs \Apos + \abs W) \leq 3\cdot (n+204n)= 3\cdot 205n=615n.$$  Thus, we can choose $\kappa$ such that 
\begin{equation}\label{eq:cblowup}
\abs {C} = \kappa \cdot n =  615n. 
\end{equation}

\subsubsection{Finishing the proof of \prref{thm:alice} in the monoid case}\label{subsec:fpoalice}
\prref{lem:complett} implies \prref{prop:completeness} by the reduction in \prref{subsec:popco}. 
This in turn proves (\ref{eq:alice}) in \prref{thm:alice} in the monoid case $\MMA = A^*$.  Clearly, $\{(h(c_1) \lds  h(c_m))\in C^*\times \cdots \times C^*\mid h\in L(\cA)\}$ is empty \IFF $L(\cA)= \es$. It remains to show that
$\cA$ contains a directed cycle \IFF $(U,V)$ has infinitely many solutions. 
If there is no cycle, then $L(\cA)$ is finite and $(U,V)$ can have only finitely many solutions. The converse has been shown in \prref{cor:infsol}. 

\section{Proof of Theorem~\ref{thm:alice} in the group case: $\M(A) = \FGA$}\label{sec:alicegroupie}

The proof is a reduction to the monoid case.
Recall that $A=\Apone \cup \{\#\}$, $\F$ is the subset of reduced words in $\Apone^*$, and $\pi: A^*\to \FGA$ is the canonical projection. 

We start with an equation $(U,V)$ in the free group $\FGA$, where $U,V\in (A\cup \cX)^*$, 
$\cX= \os{X_1,\ov{X_1}, \ldots, X_m,\ov{X_m}}$, and solutions are $A$-\morph{s} 
$\sig: (A\cup \cX)^* \to \F$ such that 
$\pi\sig(U)= \pi\sig(V)$. 
 In a first phase we transform the equation $(U,V)$ into a system of triangular equations, where \emph{triangular} means $1 \leq \abs{UV}\leq 3$. We may assume $UV\neq 1$. If $\abs{UV} \leq 3$, then the equation is already triangular. Hence, let us assume $\abs{UV}\geq 4$.
Since we are in the group case we may also assume $\abs V = 1$. 
Write $U=x_1\cdots x_p$ with $x_i \in A\cup \cX$ and $p\geq 3$. Next, we introduce a new variable $X$ and replace 
$x_1\cdots x_p=V$ by the system 
$$x_1\cdots x_{p-1}=X \wedge Xx_p = V.$$
We iterate until the system is triangular. The procedure introduces more variables, but it does not change the set of solutions. 
More formally, if $\set{(U_i,V_i)}{1 \leq i \leq t}$ is the system of triangular equations we obtained above, then 
\begin{align*}
 \{(\sig(X_1) \lds  \sig(X_m))&
 \in \F\times \cdots \times \F\mid \pi\sig(U)=\pi\sig(V)\} \\ 
 =  \{(\sig(X_1) \lds  \sig(X_m))&
 \in \F\times \cdots \times \F\mid \forall 1 \leq i \leq t:\; \pi\sig(U_i)=\pi\sig(V_i).\}
\end{align*}
The crucial step in our reduction is to switch from \solu{s} over free groups to 
\solu{s} over free monoids with \invol. We do this using the following lemma, whose geometric interpretation is simply that the Cayley graph of a free group (over standard generators) is a tree.

\begin{lemma}\label{lem:caytree}
Let $x,y,z$ be reduced words in $\Apone^*$. Then $xy=z$ holds in the group $\FGA$ ({\em i.e.} $\pi(xy)= \pi(z)$)
\IFF there are reduced words $P,Q,R$ in $\Apone^*$ such that
$x= PR$, $y = \ov RQ$, and $z =  PQ$ holds in the free monoid $\Apone^*$. 
 \end{lemma}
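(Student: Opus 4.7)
The reverse implication is immediate: if $x=PR$, $y=\ov R Q$, $z=PQ$ in the free monoid, then $\pi(xy)=\pi(PR\ov R Q)=\pi(PQ)=\pi(z)$, using $\pi(R\ov R)=1$ in $\FGA$.

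For the forward implication, my plan is to identify the ``maximal cancellation'' occurring at the boundary between $x$ and $y$. Concretely, I will define $R$ to be the longest word in $\Apone^*$ such that $R$ is a suffix of $x$ and $\ov R$ is a prefix of $y$ (the empty word satisfies both conditions, so $R$ is well defined, and its length is bounded by $\min(\abs x,\abs y)$). Factor in the free monoid $\Apone^*$:
\[
x = PR, \qquad y = \ov R Q,
\]
with $P,Q\in \Apone^*$. Since $x$ and $y$ are reduced, each of $P$, $R$, $Q$ is reduced (as a factor of a reduced word).

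The main step is to show that $PQ$ is reduced. I will argue by contradiction using maximality of $R$. Suppose $PQ$ is not reduced. Since $P$ and $Q$ are individually reduced, the only possible cancellation occurs between the last letter $p$ of $P$ and the first letter $q$ of $Q$, with $p = \ov q$. Writing $P=P'p$ and $Q=qQ'$, the word $pR$ is a suffix of $x = P'pR$, and $\ov{pR} = \ov R \ov p = \ov R q$ is a prefix of $y = \ov R q Q'$. Moreover $pR$, being a suffix of the reduced word $x$, is itself reduced. Thus $pR$ is strictly longer than $R$ and also satisfies both defining properties of $R$, contradicting maximality.

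Finally, having $PQ$ reduced, I will conclude via uniqueness of reduced representatives. In $\FGA$ we have $\pi(xy) = \pi(P)\pi(R)\pi(R)^{-1}\pi(Q) = \pi(PQ)$, and the latter equals $PQ$ under the identification of $\F$ with $\FGA$ because $PQ$ is reduced. Since $z$ is also reduced and $\pi(xy)=\pi(z)$ by hypothesis, bijectivity of $\pi$ on $\F$ gives $PQ = z$ in the free monoid, completing the proof. The main (and really only) obstacle is the maximality argument for $PQ$ being reduced; everything else is bookkeeping, and the lemma has the familiar geometric content that cancellation in a free group is confined to a single contiguous block at the seam.
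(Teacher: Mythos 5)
Your proof is correct, but it takes a somewhat different route from the paper's. The paper argues the forward direction by induction on the length of $xy$: if $xy$ is already reduced it takes $P=x$, $R=1$, $Q=y$; otherwise it peels off a single cancelling pair, writing $x=x'a$, $y=\ov a y'$, applies the inductive hypothesis to $x'y'=z$, and appends $a$ to the inductively obtained $R'$, noting that $R=R'a$ stays reduced because it is a suffix of the reduced word $x$. You instead build the whole cancellation block in one step, taking $R$ maximal with $R$ a suffix of $x$ and $\ov R$ a prefix of $y$; your maximality contradiction (if the last letter $p$ of $P$ cancelled the first letter $q$ of $Q$, then $pR$ would be a longer candidate) is precisely the non-inductive form of the paper's observation that cancellation between two reduced words is confined to the seam. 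Both arguments ultimately rest on uniqueness of reduced representatives: you invoke injectivity of $\pi$ on $\F$ explicitly to conclude $PQ=z$, while the paper uses it tacitly in its base case ($xy$ and $z$ both reduced and equal in $\FGA$, hence equal as words). The trade-off is minor: your extremal argument is direct and makes the ``one contiguous block at the seam'' picture explicit, at the cost of a short well-definedness and maximality discussion; the paper's induction avoids naming the maximal block and pushes the same content into the inductive step, one letter at a time. One small remark on your write-up: the reducedness of $pR$ plays no role in the contradiction, since the defining property of $R$ does not require reducedness, so that sentence can simply be dropped.
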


\begin{figure}[h]
\begin{center}
\begin{tikzpicture}[outer sep=0pt, inner sep = 1pt, node distance = 8pt]
\node[circle, fill]  (a) at (0,2) {};
 \node[circle, fill] (c) at (0,0) {};   
 \node[circle, fill] (x) at (-2,-1) {};
 \node[circle, fill] (z) at (2,-1) {};

 \begin{scope}[very thick,decoration={
    markings,
    mark=at position 1 with {\arrow{>}}}] 
 \draw[thick, postaction={decorate}] (a) -- node[right =.1] {$P$} (c); 
  \draw[thick, postaction={decorate}] (c) -- node[above =.1] {$R$} (x);
   \draw[thick, postaction={decorate}] (c) -- node[above =.1] {$Q$} (z);  

 \end{scope}
 
 \node[left of  = x, node distance = 10pt] {$x$};
 \node[right of  = z, node distance = 10pt] {$z$};
 \node[left of  = a] {$1$};
 \path (-1.3,-1) edge [bend left, ->] node [below] {$y$}   (1.3,-1);
\end{tikzpicture}
\caption{Paths corresponding to geodesic words for $x,y,z$ with $xy=z$ in the Cayley graph of $\FG \Apos$ with standard generators, as in \prref{lem:caytree}. The geodesics to vertices $x$ and $z$ split after an initial path labeled by  $P$.
}
\label{fig:xyz}
\end{center}
\end{figure}
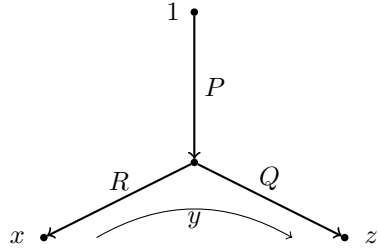

\begin{proof}
 The direction from right to left is trivial, whether or not 
 $P,Q,R$ are reduced. For the other direction there are two cases.
 First, $xy$ is a reduced word. Then we can choose
 $P=x$, $R=1$, $Q=y$, and we are done. Second, we have $x=x'a$ and $y =\ov a y'$ for some letter $a \in \Apone$, so $x'y'=z'$ holds in the group $\FGA$.
 By induction, there are  reduced words $P,Q,R'$ with $x'=PR', y'=\ov {R'}Q, z=PQ$ in $\Apone^*$. We can define
 $R= R'a$, which is reduced due to the equation $x= x'a = PR'a$ and the fact that $x$ is reduced. The result is now immediate. 
  \end{proof}

The  consequence of \prref{lem:caytree} is that with the help of fresh variables $P,Q,R$  we can substitute every equation  
$xy= z$ with $x,y,z\in \os 1 \cup \Apone \cup \OO$ in $\FGA$ 
by the following three word equations to be solved over a free monoid with involution:
\begin{align}\label{eq:fg2fm}
x = PR,\qquad  y = \ov R Q,\qquad z = PQ.
\end{align}
More precisely, in the third phase of the transformation we replace each 
$U_i= V_i$, where $U_i= x_iy_i$ and $V_i=z_i$, by the three equations
\begin{align}
x_i = P_iR_i,\qquad  y_i = \ov{R_i} Q_i,\qquad z_i = P_iQ_i.
\end{align}
Thus, for $s=3t \leq 3\abs{UV}$ we obtain a new system of triangular word equations $\set{(U'_i,V'_i)}{1 \leq i \leq s}$
such that
\begin{align}\label{eq:alices} 
 \{(\sig(X_1) \lds  \sig(X_m))&
 \in \F\times \cdots \times \F\mid \pi\sig(U)=\pi\sig(V)\} \\ 
 \label{eq:alicess}
 =  \{(\sig(X_1) \lds  \sig(X_m))&
 \in \F\times \cdots \times \F\mid \forall 1 \leq i \leq s:\; \sig(U'_i)=\sig(V'_i).\}
\end{align}
Note that the \morph $\pi$ is not present in (\ref{eq:alicess}), since (\ref{eq:alicess}) refers to a system of equations over a free monoid with involution.

The final step is to encode the system $\set{(U'_i,V'_i)}{1 \leq i \leq s}$ into a single word equation $(U'',V'')$ over the free monoid $A^*$, by defining 
\begin{align*}
U'' = U'_1\# \cdots \# U'_s\\
V'' = V'_1\# \cdots \# V'_s.
\end{align*}
Thus we have deterministically reduced the equation
$(U,V)$ to the equation $(U'',V'')$, where
\begin{align*}\label{eq:alicessses} 
\abs{U''V''}  \leq 15\abs{UV}
\end{align*}
since each $U_i'V_i'$ has length at most $3$ and we have inserted  $2s-2$ copies of the letter $\#$.
This finishes the proof of \prref{thm:alice} for the group case. 
\begin{remark}\label{rem:crazy_bound_for_free_group}
Since the length of the word equation obtained from a free group equation of length $n$  is at most $15n$, an upper bound for the size of the alphabet  $C$ in the statement of \prref{thm:alice} in the free group case is
 $615\cdot 15n  = 9225n$.\end{remark} 
  
 \section{Example of preprocessing, block and pair compression procedures}\label{sec:blockexample}

 We conclude with a demonstration of the 
 procedures described in \prref{subsec:compness} with a simple example.
 Suppose we have a single equation $(U,V)$ in a free monoid with involution 
 with \[U=XaYbaXP \;\text{ and }\;  V=bYb^3ZQ.\]
 For simplicity we have chosen an equation with no involuted letters.
 Suppose also that we know a solution \[\sig(X)=b^5, \sig(Y)=b^4a, \sig(Z)=bab, \sig(P)=ab^3a, \sig(Q)=ab^5ab^3a.\]
 We depict the situation as follows:
  \[ \overunderbraces{&\br{5}{X} & &\br{5}{Y} &&& \br{5}{X} &\br{5}{P}}
{  &b  \;&\; b  \;&\; b \;&\; b \;&\; b \;&\; a   \;&\; b \;&\; b \;&\; b \;&\; b \;&\; a \;&\; b \;&\; a \;&\; b \;&\; b \;&\; b \;&\; b \;&\;  b\;&\; a \;&\; b \;&\; b \;&\; b   \;&\; a
} 
{&&\br{5}{Y}&&&&\br{3}{Z}&\br{11}{Q}}.   
\]

 For simplicity, we will ignore the rest of the word $\Winit$, 
  and focus just on the factor $U\#V$. 
 
We first follow the preprocessing step on page \pageref{page:preprocessing}. In this case we pop the first and last letter of each variable, to obtain:
  \[ \overunderbraces{&&\br{3}{X} &&&&\br{3}{Y} &&&&& \br{3}{X}&&&\br{3}{P}}
{  &b  \;&\; b  \;&\; b \;&\; b \;&\; b \;&\; a   \;&\; b \;&\; b \;&\; b \;&\; b \;&\; a \;&\; b \;&\; a \;&\; b \;&\; b \;&\; b \;&\; b \;&\;  b\;&\; a \;&\; b \;&\; b \;&\; b   \;&\; a
} 
{&&&\br{3}{Y}&&&&&&\br{1}{Z}&&&\br{9}{Q}}.   
\]

 Next we enter block compression. In step (1) we compute $\Lambda_a=\es, \Lambda_b=\{4,5\}$. Note that $3\not\in \Lambda_b$ since the factor $b^3$ is completely inside $P$ and $Q$ so is not visible. The block compression process will not touch this factor.
 We also compute $\cX_a=\es$ and $\cX_b=\{X,Y\}$. Note that $P\not\in \cX_b$  since it is preceded by $a$ in $W$.

 Step (2) introduces the fresh letters $c_b,c_{4,b}, c_{5,b}$, and renames the letters $b$ that are part of a visible block of length at least 2 as $c_b$:
  \[ \overunderbraces{&&\br{3}{X} &&&&\br{3}{Y} &&&&& \br{3}{X}&&&\br{1}{P}}
{  &c_b  \;&\; c_b  \;&\; c_b \;&\; c_b \;&\; c_b \;&\; a   \;&\; c_b \;&\; c_b \;&\; c_b \;&\; c_b \;&\; a \;&\; b \;&\; a \;&\; c_b \;&\; c_b \;&\; c_b \;&\; c_b \;&\;  c_b\;&\; a \;&\;  b^3   \;&\; a
} 
{&&&\br{3}{Y}&&&&&&\br{1}{Z}&&&\br{7}{Q}}.   
\]

 In step (3) we split the variables $X\ra X'X,Y\ra Y'Y$,  then remove $X,Y$ since $\sig(X)=1=\sig(Y)$:
  \[ \overunderbraces{&&\br{3}{X'} &&&&\br{3}{Y'} &&&&& \br{3}{X'}&&&\br{1}{P}}
{  &c_b  \;&\; c_b  \;&\; c_b \;&\; c_b \;&\; c_b \;&\; a   \;&\; c_b \;&\; c_b \;&\; c_b \;&\; c_b \;&\; a \;&\; b \;&\; a \;&\; c_b \;&\; c_b \;&\; c_b \;&\; c_b \;&\;  c_b\;&\; a \;&\;  b^3   \;&\; a
} 
{&&&\br{3}{Y'}&&&&&&\br{1}{Z}&&&\br{7}{Q}}.   
\]
 Note that $Q$ does not belong to $\cX_b$, so it does not split even though $\sig(Q)$ starts with $c_b$.

 Step (4) renames one of the $c_b$ in each block in both $W$ and $\sig(W)$:
  \[ \overunderbraces{&&\br{3}{X'} &&&&\br{3}{Y'} &&&&& \br{3}{X'}&&&\br{1}{P}}
{  &c_{5,b}  \;&\; c_b  \;&\; c_b \;&\; c_b \;&\; c_b \;&\; a   \;&\; c_{4,b} \;&\; c_b \;&\; c_b \;&\; c_b \;&\; a \;&\; b \;&\; a \;&\; c_{5,b} \;&\; c_b \;&\; c_b \;&\; c_b \;&\;  c_b\;&\; a \;&\;  b^3   \;&\; a
} 
{&&&\br{3}{Y'}&&&&&&\br{1}{Z}&&&\br{7}{Q}}.   
\]

 We now enter the loop in step (5). We  write $c=c_b, c_\lambda=c_{\lambda, b}$:
  \[ \overunderbraces{&&\br{3}{X'} &&&&\br{3}{Y'} &&&&& \br{3}{X'}&&&\br{1}{P}}
{  &c_5  \;&\; c  \;&\; c \;&\; c \;&\; c \;&\; a   \;&\; c_4 \;&\; c \;&\; c \;&\; c \;&\; a \;&\; b \;&\; a \;&\; c_5 \;&\; c \;&\; c \;&\; c \;&\;  c\;&\; a \;&\;  b^3   \;&\; a
} 
{&&&\br{3}{Y'}&&&&&&\br{1}{Z}&&&\br{7}{Q}}.   
\]

 Since $\theta(X')=\theta(Y')=c$ we pop each to make the number of $c$ letters in each $\sig(X)$ even:
  \[ \overunderbraces{&&\br{2}{X'} &&&&&&\br{2}{Y'} &&&&& \br{2}{X'}&&&&\br{1}{P}}
{  &c_5  \;&\; c  \;&\; c \;&\; c \;&\; c \;&\; a   \;&\; c_4 \;&\; c \;&\; c \;&\; c \;&\; a \;&\; b \;&\; a \;&\; c_5 \;&\; c \;&\; c \;&\; c \;&\;  c\;&\; a \;&\;  b^3   \;&\; a
} 
{&&&&\br{2}{Y'}&&&&&&\br{1}{Z}&&&\br{7}{Q}}.   
\]
 Note that we have used the fact that $X',Y'$ commute with $c$ in the partially commutative monoid.

 We are now at part (d) of step (6). Since $c_4c^3$ is a factor where the number of $c$ letters is odd, 
 we follow the compression \tra $h(c_4)=c_4c$  to obtain:
    \[ \overunderbraces{&&\br{2}{X'} &&&&&\br{2}{Y'} &&&&& \br{2}{X'}&&&&\br{1}{P}}
{  &c_5  \;&\; c  \;&\; c \;&\; c \;&\; c \;&\; a   \;&\; c_4 \;&\; c \;&\; c \;&\; a \;&\; b \;&\; a \;&\; c_5 \;&\; c \;&\; c \;&\; c \;&\;  c\;&\; a \;&\;  b^3   \;&\; a
} 
{&&&&\br{2}{Y'}&&&&&\br{1}{Z}&&&\br{7}{Q}}.   
\]

 We now have all blocks of $c$ inside variables and in $W$ of even length, so we can finally follow the block compression \tra $h(c)=cc$ to reduce the number of $c$ letters by half:
    \[ \overunderbraces{&&\br{1}{X'} &&&&\br{1}{Y'} &&&&& \br{1}{X'}&&&\br{1}{P}}
{  &c_5  \;&\; c \;&\; c \;&\; a   \;&\; c_4 \;&\; c \;&\; a \;&\; b \;&\; a \;&\; c_5  \;&\; c \;&\;  c\;&\; a \;&\;  b^3   \;&\; a
} 
{&&&\br{1}{Y'}&&&&\br{1}{Z}&&&\br{5}{Q}}.   
\]

Since there are still $c$ letters remaining in $\sig(W)$ we repeat the loop, and after two more iterations of the loop we obtain:
    \[ \overunderbraces{& &&&&&&&&\br{1}{P}}
{  &c_5    \;&\; a   \;&\; c_4 \;&\; a \;&\; b \;&\; a \;&\; c_5 \;&\; a \;&\;  b^3   \;&\; a
} 
{&&&&\br{1}{Z}&&&\br{3}{Q}}.   
\]

 At this point we have removed all letters $c_b$ so the loop terminates. 
 We reduce the alphabet by removing $c_b$, and remove the types.
 Note that we keep each $c_{\lambda, b}$ since each letter represents a different length block of $b$'s, and therefore they are all different. Let us rename $c_{5,b}=d$ and $c_{4,b}=e$. So the equation is now:
     \[ \overunderbraces{& &&&&&&&&\br{3}{P}}
{  &d   \;&\; a   \;&\; e \;&\; a \;&\; b \;&\; a \;&\; d \;&\; a \;&\;  b \;&\;  b \;&\;  b   \;&\; a
} 
{&&&&\br{1}{Z}&&&\br{5}{Q}}.   
\]
 As promised, $W$ contains no proper factors $b^2$ for any $b\in B$, so we can start pair compression.

Suppose we choose a partition of $B\sm\{\#\}$ as $B_+=\{\ov a, b,d,e\}$ and $B_-=\{a, \ov b,\ov d,\ov e\}$ (we suppose  this choice is maximal according to  \prref{rem:choiceLR}).
 In step (1) of pair compression we introduce  fresh letters $c_{ba}, c_{da}, c_{ea}$, then in step (2) we create the list $\cL=\{Z, \ov P, \ov Q\}$. (We will continue to ignore involutions, and focus just on a factor of $W$ containing no involuted letters or variables).
 We perform uncrossing by popping $a$ from $Z$ and removing $Z$, and  since we follow $\ov P\ra \ov b\ov P$ then we also follow $P\ra Pb$, and similarly for $Q$, leading to:
     \[ \overunderbraces{& &&&&&&&&\br{2}{P}}
{  &d   \;&\; a   \;&\; e \;&\; a \;&\; b \;&\; a \;&\; d \;&\; a \;&\;  b \;&\;  b \;&\;  b   \;&\; a
} 
{&&&&&&&\br{4}{Q}}.   
\]

In step (3) we follow  compression transitions $h(c_{ba})=ba, h(c_{da})=da, h(c_{ea})=ea$ to obtain:
     \[ \overunderbraces{&&&&&\br{2}{P}}
{  &c_{da}   \;&\; c_{ea} \;&\; c_{ba} \;&\; c_{da} \;&\;  b \;&\;  b \;&\;  c_{ba}
} 
{&&&&\br{3}{Q}}.   
\]

This completes one round of the process. 
We then return to the preprocessing step, which gives:
      \[ \overunderbraces{&&&&&}
{  &c_{da}   \;&\; c_{ea} \;&\; c_{ba} \;&\; c_{da} \;&\;  b \;&\;  b \;&\;  c_{ba}
} 
{&&&&&\br{1}{Q}},   
\]
and then block compression would produce:
      \[ \overunderbraces{&&&&&}
{  &c_{da}   \;&\; c_{ea} \;&\; c_{ba} \;&\; c_{da} \;&\;  c_{2,b} \;&\;  c_{ba}.
} 
{&&&&&}   
\]

\section*{Acknowledgments}

We wish to thank Monserrat Casals-Ruiz, Artur Je\.z,   Ilya Kazachkov,  Markus Lohrey,  Alexei Miasnikov, Nicholas Touikan for helpful discussions.
We are particularly indebted to the referee for numerous suggestions which greatly improved the presentation.

\bibliographystyle{abbrv}
\bibliography{traces}

\newcommand{\Ju}{Ju}\newcommand{\Ph}{Ph}\newcommand{\Th}{Th}\newcommand{\Ch}{Ch}\newcommand{\Yu}{Yu}\newcommand{\Zh}{Zh}\newcommand{\St}{St}\newcommand{\curlybraces}[1]{\{#1\}}
\begin{thebibliography}{10}

\bibitem{Aho68}
A.~V. Aho.
\newblock Indexed grammars---an extension of context-free grammars.
\newblock {\em J. ACM}, 15:647--671, 1968.

\bibitem{Asveld1977}
P.~R. Asveld.
\newblock Controlled iteration grammars and full hyper-{AFL}'s.
\newblock {\em Information and Control}, 34(3):248 -- 269, 1977.

\bibitem{CiobanuDEicalp2015}
L.~Ciobanu, V.~Diekert, and M.~Elder.
\newblock Solution sets for equations over free groups are {EDT0L} languages.
\newblock In M.~Halld{\'o}rsson, K.~Iwama, N.~Kobayashi, and B.~Speckmann,
  editors, {\em Proc. 42nd International Colloquium Automata, Languages and
  Programming (ICALP 2015), Part~{II}, Kyoto, Japan, July 6-10, 2015}, volume
  9135 of {\em Lecture Notes in Computer Science}, pages 134--145. Springer,
  2015.
\newblock Journal version to appear 2016 in IJAC.

\bibitem{CiobanuDEarxiv2015}
L.~Ciobanu, V.~Diekert, and M.~Elder.
\newblock Solution sets for equations over free groups are {EDT0L} languages.
\newblock {\em ArXiv e-prints}, abs/1502.03426, 2015.

\bibitem{cl85}
M.~Clerbout and M.~Latteux.
\newblock Partial commutations and faithful rational transductions.
\newblock {\em Theoretical Computer Science}, 34:241--254, 1984.

\bibitem{DiekertJP2014csr}
V.~Diekert, A.~Je\.z, and W.~Plandowski.
\newblock Finding all solutions of equations in free groups and monoids with
  involution.
\newblock In E.~A. Hirsch, S.~O. Kuznetsov, J.~Pin, and N.~K. Vereshchagin,
  editors, {\em Computer Science Symposium in Russia 2014, {CSR} 2014, Moscow,
  Russia, June 7-11, 2014. Proceedings}, volume 8476 of {\em Lecture Notes in
  Computer Science}, pages 1--15. Springer, 2014.

\bibitem{dr95}
V.~Diekert and G.~Rozenberg, editors.
\newblock {\em The Book of Traces}.
\newblock World Scientific, Singapore, 1995.

\bibitem{EhrRoz77}
A.~Ehrenfeucht and G.~Rozenberg.
\newblock On some context free languages that are not deterministic {ET0L}
  languages.
\newblock {\em RAIRO Theor. Inform. Appl.}, 11:273--291, 1977.

\bibitem{eil74}
S.~Eilenberg.
\newblock {\em Automata, Languages, and Machines}, volume~A.
\newblock Academic Press, New York and London, 1974.

\bibitem{FerteMarinSenizerguesTocs14}
J.~Fert{\'e}, N.~Marin, and G.~S{\'e}nizergues.
\newblock Word-mappings of level $2$.
\newblock {\em Theory Comput. Syst.}, 54:111--148, 2014.

\bibitem{GinsburgRoz75}
S.~Ginsburg and G.~Rozenberg.
\newblock {T0L} schemes and control sets.
\newblock {\em Information and Control}, 27:109--125, 1975.

\bibitem{Jez16jacm}
A.~Je\.z.
\newblock Recompression: a simple and powerful technique for word equations.
\newblock {\em J. ACM}, 63(1):4:1--4:51, 2016.
\newblock Conference version in STACS 2013.

\bibitem{kel73}
R.~M. Keller.
\newblock Parallel program schemata and maximal parallelism~{I}. {F}undamental
  results.
\newblock {\em J. ACM}, 20(3):514--537, 1973.

\bibitem{KMIV06}
O.~{Kharlampovich} and A.~{Myasnikov}.
\newblock Elementary theory of free non-abelian groups.
\newblock {\em J. of Algebra}, 302:451--552, 2006.

\bibitem{maz77}
A.~Mazurkiewicz.
\newblock Concurrent program schemes and their interpretations.
\newblock {DAIMI Rep. PB}~78, Aarhus University, Aarhus, 1977.

\bibitem{mes97}
J.~Messner.
\newblock Pattern matching in trace monoids.
\newblock In R.~Reischuk, editor, {\em Proc. 14th Annual Symposium on
  Theoretical Aspects of Computer Science (STACS'97), L{\"u}beck (Germany),
  1997}, volume 1200 of {\em Lecture Notes in Computer Science}, pages
  571--582, Heidelberg, 1997. Springer-Verlag.

\bibitem{pap94}
{\Ch}.~H. Papadimitriou.
\newblock {\em Computational Complexity}.
\newblock Addison Wesley, 1994.

\bibitem{pin86}
J.-{\'E}. Pin.
\newblock {\em {Varieties of Formal Languages}}.
\newblock North Oxford Academic, London, 1986.

\bibitem{Plandowski06stoc}
W.~Plandowski.
\newblock An efficient algorithm for solving word equations.
\newblock In J.~M. Kleinberg, editor, {\em STOC}, pages 467--476. ACM, 2006.

\bibitem{raz87}
A.~A. Razborov.
\newblock {\em On Systems of Equations in Free Groups}.
\newblock PhD thesis, Steklov Institute of Mathematics, 1987.
\newblock In Russian.

\bibitem{raz93}
A.~A. Razborov.
\newblock On systems of equations in free groups.
\newblock In {\em Combinatorial and Geometric Group Theory}, pages 269--283.
  Cambridge University Press, 1994.

\bibitem{RozS86}
G.~Rozenberg and A.~Salomaa.
\newblock {\em The Book of {L}}.
\newblock Springer, 1986.

\bibitem{rs97vol1}
G.~Rozenberg and A.~Salomaa, editors.
\newblock {\em Handbook of Formal Languages}, volume~1.
\newblock Springer, 1997.

\bibitem{sela13}
Z.~Sela.
\newblock Diophantine geometry over groups {VIII}: {S}tability.
\newblock {\em Ann. of Math.}, 177:787--868, 2013.

\bibitem{MR2542213}
N.~W.~M. Touikan.
\newblock The equation {$w(x,y)=u$} over free groups: an algebraic approach.
\newblock {\em J. Group Theory}, 12(4):611--634, 2009.

\bibitem{Wise2012}
D.~Wise.
\newblock {\em From Riches to {R}aags: 3-Manifolds, Right-Angled {A}rtin
  Groups, and Cubical Geometry}.
\newblock American Mathematical Society, 2012.

\end{thebibliography}

\end{document}